\numberwithin{equation}{section}
\let\Re=\undefined\DeclareMathOperator*{\Re}{Re}
\let\Im=\undefined\DeclareMathOperator*{\Im}{Im}
\newcommand{\ld}{\mathcal{L}_a}
\DeclareMathOperator*{\supp}{supp}
\newcommand{\R}{\mathbb{R}}
\newcommand{\N}{\mathbb{N}}
\newcommand{\C}{\mathbb{C}}
\newcommand{\eps}{\varepsilon}
\newcommand{\Z}{\mathbb{Z}}
\newcommand{\qtq}[1]{\quad\text{#1}\quad}
\newtheorem{theorem}{Theorem}[section]
\newtheorem{lemma}[theorem]{Lemma}
\newtheorem{corollary}[theorem]{Corollary}
\newtheorem{proposition}[theorem]{Proposition}
\theoremstyle{definition}
\newtheorem{definition}[theorem]{Definition}
\theoremstyle{remark}
\newcommand{\Extend}[5]{\ext@arrow0099{\arrowfill@#1#2#3}{#4}{#5}}
\begin{document}
\title[Energy-critical NLS with inverse-square potential]{The energy-critical NLS with inverse-square potential}

\author{R. Killip}
\address{Department of Mathematics, UCLA}
\email{killip@math.ucla.edu}

\author{C. Miao}
\address{Institute of Applied Physics and Computational Mathematics, Beijing 100088}
\email{miao\_changxing@iapcm.ac.cn}

\author{M. Visan}
\address{Department of Mathematics, UCLA}
\email{visan@math.ucla.edu}

\author{J. Zhang}
\address{Department of Mathematics, Beijing Institute of Technology, Beijing 100081, China; Beijing Computational Science Research Center, Beijing}
\email{zhang\_junyong@bit.edu.cn}

\author{J. Zheng}
\address{Universit\'e Nice Sophia-Antipolis, 06108 Nice Cedex 02, France}
\email{zhengjiqiang@gmail.com, zheng@unice.fr}

\begin{abstract}
We consider the defocusing energy-critical nonlinear Schr\"odinger equation with inverse-square potential  $iu_t = -\Delta u + a|x|^{-2}u + |u|^4u$ in three space dimensions. We prove global well-posedness and scattering for $a>-\frac14 +\frac1{25}$.  We also carry out the variational analysis needed to treat the focusing case.
\end{abstract}

\maketitle

\begin{center}
 \begin{minipage}{100mm}
   { \small {{\bf Key Words:}  Nonlinear Schr\"odinger equation;  scattering;  inverse-square potential;
   concentration compactness}
      {}
   }\\
    { \small {\bf AMS Classification:}
      {35P25,  35Q55, 47J35.}
      }
 \end{minipage}
 \end{center}
 

\section{Introduction}

We study the initial-value problem for the energy-critical defocusing nonlinear Schr\"odinger equation with an inverse-square potential in three spatial dimensions.  Before formulating this problem precisely, we first need to give a proper formulation of the associated linear problem. 

For $a\geq -\frac14$, the map
$$
Q(f) := \int_{\R^3} |\nabla f(x)|^2 + \tfrac{a}{|x|^2} |f(x)|^2\,dx
$$
defines a positive definite quadratic form on $C^\infty_c(\R^3\setminus\{0\})$.  Indeed, this can be verified by observing that
$$
Q(f) = \int_{\R^3} \bigl| \nabla f + \tfrac{\sigma x}{|x|^2} f \bigr|^2 \,dx \qtq{with} \sigma = \tfrac12 - \sqrt{\tfrac14+a}\,.
$$
We then define the operator
$$
\ld := -\Delta + \tfrac{a}{|x|^2}
$$
as the Friedrichs extension of the quadratic form $Q(f)$; see, for example, \cite[\S X.3]{RS2} for the general theory of such extensions and \cite[\S1.1]{KMVZZ} or \cite{KSWW} for more on this particular operator.  We choose the Friedrichs extension for physically motivated reasons:  (i) when $a=0$ it yields the usual Laplacian $-\Delta$ and (ii) the Friedrichs extension appears when one takes a scaling limit of more regular potentials.  For example,
$$
L_n:=-\Delta + \frac{a n^2}{1+n^2|x|^2} \longrightarrow  \ld \qtq{as} n\to\infty
$$
in strong resolvent sense (where we understand $L_n$ as having domain $H^2(\R^3)$).

The fact that $Q(f)\geq 0$ when $a \geq -\tfrac{1}{4}$, but not smaller, is one realization of the sharp Hardy inequality.  When $a<-\tfrac14$, all self-adjoint extensions of $-\Delta+a|x|^{-2}$ are unbounded below; we do not consider this case at all in what follows.  On the other hand,  the sharp Hardy inequality also shows that
\begin{equation}\label{E:equiv I}
Q(f) = \bigl\| \sqrt{\ld} f \bigr\|_{L^2_x}^2  \sim \bigl\| \nabla f \bigr\|_{L^2_x}^2 \qtq{for} a > -\tfrac14.
\end{equation}
Note that this is an assertion of the isomorphism of the Sobolev spaces $\dot H^1(\R^3)$ defined in terms of powers of $\ld$ and via the usual gradient.  The determination of the sharp range of parameters for which such an equivalence holds was the main result of the paper \cite{KMVZZ}.  For ready accessibility, we repeat this result as Theorem~\ref{pro:equivsobolev} below.

In this paper, we study the initial-value problem
\begin{align} \label{equ1.1}
\begin{cases}    (i\partial_t-\ld)u= |u|^4u,\quad (t,x)\in\R\times\R^3, \\
u(0,x)=u_0(x) \in \dot H^1_x(\R^3),
\end{cases}
\end{align}
for $u:\R_t^{ }\times\R_x^3\to \C$.  The necessity of interpreting $-\Delta + a|x|^{-2}$ as a self-adjoint operator $\ld$ becomes more apparent when \eqref{equ1.1} is rewritten as an integral equation:
\begin{align}\label{E:StrSol}
u(t) = e^{-it\ld} u_0 - i \int_0^t e^{-i(t-s)\ld} |u(s)|^4u(s)\,ds.
\end{align}
In particular, it gives meaning to $e^{-it\ld}$ via the Hilbert-space functional calculus.

By a \emph{solution} to \eqref{equ1.1} we will always mean a strong solution, that is, a function $u\in C_t^{ }\dot H^1_x \cap L^{10}_{t,\text{loc}}L^{10}_x$ that obeys \eqref{E:StrSol}.  In Section~\ref{SS:LWP}, we will prove that such solutions exist, at least locally in time, when $a>-\tfrac14+\frac1{25}$.  The restriction $a> -\frac14+\frac1{25}$ represents the limit of what can be done within the confines of the usual Strichartz methodology.  This breakdown does not originate in the failure of Strichartz estimates for the propagator $e^{-it\ld}$; indeed, the paper \cite{BPST1} shows that the full range of such estimates holds for $a>-\tfrac14$.  Rather, it stems from failures in Sobolev embedding.  We will explain this more fully in Section~\ref{SS:LWP}.

The main result of this paper, however, is to prove global well-posedness of the problem \eqref{equ1.1} and determine the asymptotic behavior of solutions as $t\to\pm\infty$.  A key ingredient in extending the local result to a global one is the conservation of energy.  More precisely, solutions to \eqref{equ1.1} preserve
\begin{equation}\label{energy}
 E(u(t)):=\int_{\R^3} \tfrac12|\nabla u(t,x)|^2 + \tfrac{a}{2|x|^2} |u(t,x)|^2 + \tfrac1{6} |u(t,x)|^{6}  \,dx.
\end{equation}
In view of \eqref{E:equiv I}, this guarantees that solutions remain bounded in $\dot H^1_x(\R^3)$ for all time.  By itself, this does not suffice to guarantee that solutions can be continued indefinitely.  The difficulty stems from the energy-criticality of the nonlinearity.  If $u(t,x)$ is a solution to \eqref{equ1.1}, then so is
$\lambda^{1/2} u (\lambda^2 t, \lambda x)$; moreover, both solutions have the same energy.  Correspondingly, one must fear that a solution may concentrate a positive amount of energy at a single point in finite time; the solution cannot then be continued past this time, at least, not in the class of strong solutions.

In the setting of nonlinear Schr\"odinger equations, the scourge of energy-criticality was overcome first in the setting of \eqref{equ1.1} with $a=0$.  This was ground-breaking work of Bourgain \cite{Bo99a} for radial (=spherically symmetric) data and of Colliander, Keel, Staffilani, Takaoka, and Tao, \cite{CKSTT07}, in the case of general data.  Their result reads as follows: 

\begin{theorem}\label{T:gopher} Given $u_0\in\dot H^1(\R^3)$, there is a unique strong solution $u$ to
\begin{align} \label{nls}
\begin{cases}    (i\partial_t+ \Delta)u= |u|^4u,\quad
(t,x)\in\R\times\R^3,
\\
u(0,x)=u_0(x).
\end{cases}
\end{align}
Moreover, the solution $u$ is global in time and obeys the estimate
\begin{equation}\label{Gophergoal}
\int_{\R}\int_{\R^3}|u(t,x)|^{10}\,dx\,dt\leq C( \|u_0\|_{\dot H^1_x} ).
\end{equation}
\end{theorem}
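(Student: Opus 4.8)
\emph{The plan} is to argue by concentration-compactness and rigidity. \emph{First}, using the Strichartz estimates for the free Schr\"odinger propagator on $\R^3$, I would develop the local theory for \eqref{nls} in $\dot H^1_x$: local well-posedness, small-data global well-posedness with scattering (in particular \eqref{Gophergoal} with small $C$ when $\|u_0\|_{\dot H^1_x}$ is small), and a robust stability/perturbation theory allowing one to upgrade an approximate solution of finite energy and finite $L^{10}_{t,x}$ norm to a genuine solution with comparable norms. This reduces the theorem to the a priori estimate $L(E)<\infty$ for every $E<\infty$, where $L(E):=\sup\{\|u\|_{L^{10}_{t,x}(\R\times\R^3)}: E(u)\le E\}$, the supremum being over all solutions.

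\emph{Next}, suppose $L(E)=\infty$ for some $E$, and let $E_c\in(0,\infty)$ be the infimum of such values. Applying the linear profile decomposition for bounded sequences in $\dot H^1_x$ — organised by the symmetries that act boundedly on $\dot H^1_x$, namely spatial translations, parabolic rescalings, and time translations — to a sequence of near-counterexamples, and invoking the stability theory to decouple the associated nonlinear profiles, one extracts a minimal-energy blow-up solution $u_c$ with $E(u_c)=E_c$ and, say, $\|u_c\|_{L^{10}_{t,x}([0,\infty)\times\R^3)}=\infty$, whose orbit is precompact in $\dot H^1_x$ modulo symmetries: there are $N\colon[0,\infty)\to(0,\infty)$ and $x\colon[0,\infty)\to\R^3$ so that $\{N(t)^{-1/2}u_c(t,N(t)^{-1}(\cdot)+x(t)):t\ge0\}$ is precompact in $\dot H^1_x$. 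A further minimality argument applied to the modulation parameters $N(t)$ and $x(t)$ then reduces matters to three scenarios for $u_c$: (i) finite-time blow-up; (ii) a soliton-like solution, global with $N(t)\equiv1$ and $x(t)$ bounded; (iii) a low-to-high frequency cascade, global with $\inf_t N(t)\ge1$ but $N(t_n)\to\infty$ along some $t_n\to\infty$.

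\emph{Then} I would preclude each scenario using the conservation laws. The cascade (iii) is ruled out by first running a negative-regularity argument — propagating the compactness downward in frequency — to show $u_c\in L^2_x$, hence of finite, conserved mass; but concentration at frequencies $\gtrsim N(t_n)\to\infty$ forces $\|u_c(t_n)\|_{L^2_x}\lesssim N(t_n)^{-1}\|u_c(t_n)\|_{\dot H^1_x}\to0$, contradicting conservation of a positive mass. Scenarios (i) and (ii) are where the defocusing sign enters: via an interaction Morawetz inequality, suitably localized in space — for (ii) to the bounded region where $u_c$ concentrates, and for (i) to the shrinking balls around the blow-up point $x_0=\lim_{t\to T^*}x(t)$, using an almost-finite-speed-of-propagation argument to control $x(t)$ — one obtains a spacetime bound of the shape $\|u_c\|_{L^6_{t,x}(I\times\R^3)}^6\lesssim E(u_c)\,|I|^{\theta}$ (or, in the finite-time case, forces all of the energy to concentrate at $x_0$), which contradicts the lower bound on $\|u_c\|_{L^6_{t,x}}$ forced by compactness and the non-triviality of $u_c$.

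\emph{The main obstacle} is the construction of $u_c$ together with the Morawetz step. Building the critical element requires not merely the linear profile decomposition but a nonlinear version of it, in which profiles living at widely separated scales and at very early or very late times must be summed into an approximate solution; this rests on a sufficiently quantitative stability theory. For the rigidity step, the natural interaction Morawetz functional is not scale-invariant at the energy-critical exponent, so it cannot be applied to $u_c$ directly: one must either first establish extra regularity/decay for $u_c$ or run a frequency-localized Morawetz estimate whose error terms are shown to be negligible because the high- and low-frequency tails of $u_c$ carry little energy. An alternative — the route of Bourgain and of Colliander, Keel, Staffilani, Takaoka, and Tao — replaces the extraction of a single critical element by an induction-on-energy scheme that carries a frequency-localized interaction Morawetz estimate through directly at each energy level; the conceptual ingredients are the same, but the bookkeeping is considerably heavier.
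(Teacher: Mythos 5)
The paper does not prove Theorem~\ref{T:gopher}: it is quoted as the main result of Colliander--Keel--Staffilani--Takaoka--Tao \cite{CKSTT07} (and of Bourgain \cite{Bo99a} in the radial case) and used as a black box; indeed, the whole point of Theorem~\ref{T:embed} is to piggyback on this imported result for profiles far from the origin. So there is no internal proof against which to measure you. What you sketch is the concentration-compactness (Kenig--Merle) route, which the paper also alludes to via the reference to \cite{KV:quintic}; the original argument of \cite{CKSTT07} is an induction on energy carrying a frequency-localized interaction Morawetz estimate, which you correctly flag as the heavier alternative. As a roadmap to a known theorem your outline is reasonable, and it is not in conflict with anything in the paper, which explicitly declines to re-prove it.

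Two details are stated more loosely than they can afford to be. For the cascade scenario, knowing $u_c\in L^\infty_t L^2_x$ is not enough to run the computation you write: almost periodicity in $\dot H^1_x$ combined with $N(t_n)\to\infty$ evacuates the low-frequency part of the $\dot H^1_x$ norm, not of the $L^2_x$ norm, so the inequality $\|u_c(t_n)\|_{L^2_x}\lesssim N(t_n)^{-1}\|u_c(t_n)\|_{\dot H^1_x}$ is not justified by what you have established. You need genuine negative regularity, $u_c\in L^\infty_t\dot H^{-\eps}_x$ for some $\eps>0$: then the low frequencies are killed by interpolating the $\dot H^{-\eps}_x$ bound against the vanishing low-frequency $\dot H^1_x$ mass, and only the high frequencies come from Bernstein as you describe. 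Second, the interaction Morawetz inequality in three dimensions yields an $L^4_{t,x}$ (not $L^6_{t,x}$) bound and already requires an $L^2_x$-type input that the energy does not supply; this is precisely why both \cite{CKSTT07} and \cite{KV:quintic} run a frequency-localized version of the inequality rather than the purely spatial localization you propose. Neither issue is a wrong turn --- you correctly identify the Morawetz step as the main obstacle --- but they are exactly where the substance of the theorem lives.
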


The space-time bound \eqref{Gophergoal} guarantees that solutions scatter, that is, are asymptotic to linear solutions as $t\to\pm\infty$ in the sense of \eqref{E:Scat}; see, for example, Corollary~1.2 in \cite{CKSTT07}.

The analogue of Theorem~\ref{T:gopher} has been known in general dimensions for some time \cite{RV,TaoRadial,Visanphd,Visan2007}.   In this paper, we focus on the new difficulties introduced by the presence of the inverse-square potential and so restrict ourselves to three spatial dimensions for concreteness.  We are aware of no new difficulties associated to the higher dimensional problem beyond those discussed in Section~\ref{SS:LWP} in connection with the stability theory.  Here is the main result of this paper:

\begin{theorem}\label{theorem} Fix  $a> -\frac14+\frac1{25}$. Given $u_0\in\dot H^1(\R^3)$, there is a unique global solution $u$ to \eqref{equ1.1} satisfying
\begin{equation}\label{goal}
\int_{\R}\int_{\R^3}|u(t,x)|^{10}\,dx\,dt\leq C( \|u_0\|_{\dot H^1_x} ).
\end{equation}
Moreover, the solution $u$ scatters, that is, there exist unique $u_\pm\in \dot H^1(\R^3)$ such that
\begin{equation}\label{E:Scat}
\lim_{t\to\pm\infty}\|u(t)-e^{-it\ld}u_\pm\|_{\dot H^1_x}=0.
\end{equation}
\end{theorem}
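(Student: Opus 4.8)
The plan is to follow the now-standard concentration-compactness/rigidity strategy of Kenig and Merle, adapted to the presence of the potential. The starting point is the local theory sketched in Section~\ref{SS:LWP}: for $a>-\tfrac14+\tfrac1{25}$ one has local well-posedness in $\dot H^1_x$, a small-data global theory, a stability (perturbation) theory for approximate solutions, and — crucially — the equivalence of Sobolev norms \eqref{E:equiv I} together with the full range of Strichartz estimates for $e^{-it\ld}$ from \cite{BPST1}. From these ingredients, the global space-time bound \eqref{goal} follows once we rule out the existence of a minimal-energy blowup (``critical'') solution, and the scattering statement \eqref{E:Scat} is a routine consequence of \eqref{goal} via the perturbation theory. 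So the real content is the proof of \eqref{goal}.

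First I would set up the contradiction: if \eqref{goal} fails, then by the small-data theory and a standard argument there is a critical energy $E_c>0$ below which all solutions are global with finite $L^{10}_{t,x}$ norm, but at which this fails. Using a linear profile decomposition adapted to the symmetries of \eqref{equ1.1} — namely the scaling $\lambda^{1/2}u(\lambda^2 t,\lambda x)$ and time translations, but \emph{not} space translations, since the potential breaks translation invariance — together with the stability theory, one extracts a minimal blowup solution $u$ with $E(u)=E_c$ whose orbit $\{u(t)\}$ is precompact in $\dot H^1_x$ modulo the scaling symmetry; that is, there is a function $N(t)>0$ such that $\{N(t)^{-1/2}u(t,N(t)^{-1}\,\cdot)\}$ is precompact in $\dot H^1_x$. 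The linear profile decomposition is where the potential intrudes most visibly: one must handle profiles whose spatial scale is much smaller than, comparable to, or much larger than their distance from the origin. In the first and last regimes the potential is, respectively, negligible after rescaling or approaches $a|x|^{-2}$ with the origin pushed to infinity; the key point (established in companion work on the scattering theory, and reducible to \cite{BPST1} plus \eqref{E:equiv I}) is that in the ``potential escapes to infinity'' regime the relevant nonlinear dynamics is governed by the \emph{free} energy-critical NLS, for which Theorem~\ref{T:gopher} supplies the needed space-time bound. This is the step I expect to be the main obstacle: proving the relevant decoupling and orthogonality statements, and showing that the only surviving profile lives at a fixed scale relative to the origin (so that the minimal blowup solution genuinely ``sees'' the potential).

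Next, one upgrades the compactness of the orbit. Standard arguments — using the local constancy of $N(t)$, a covering argument in time, and the fact that the solution cannot escape to spatial infinity (again the potential helps here, since it provides a genuine force pinning mass near the origin) — reduce matters to one or two enemy scenarios: a \emph{soliton-like} solution with $N(t)\equiv 1$, or possibly a self-similar/low-to-high frequency cascade. The additional negative regularity afforded by \eqref{E:equiv I} and the reduced symmetry group typically allow one to eliminate all but the soliton-like scenario outright, or to merge the cases.

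Finally, I would kill the remaining minimal blowup solution using a virial/Morawetz rigidity argument. One tests the equation against a truncated virial weight; the commutator with $\ld$ produces, besides the usual terms, a contribution from the potential of the form $\int a|x|^{-2}|u|^2\,\phi$ with a \emph{favorable} sign in the defocusing setting (this is precisely where $a\geq 0$, or the variational analysis near $a=0$, would matter most, but for $a>-\tfrac14+\tfrac1{25}$ one exploits that $Q(f)\gtrsim\|\nabla f\|_2^2$ so the potential term is controlled and the dominant balance is as in the $a=0$ case). Combining the resulting coercive lower bound on the Morawetz quantity over a long time interval with the uniform-in-time bound on the same quantity coming from compactness forces $u\equiv 0$, contradicting $E_c>0$. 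This yields \eqref{goal}; the scattering statement \eqref{E:Scat} then follows by writing $u(t)=e^{-it\ld}u_0-i\int_0^t e^{-i(t-s)\ld}|u|^4u\,ds$, using \eqref{goal} to see the Duhamel term is Cauchy in $\dot H^1_x$ as $t\to\pm\infty$, and invoking \eqref{E:equiv I} together with the Strichartz estimates for $e^{-it\ld}$. Uniqueness of $u_\pm$ is immediate from the isometry properties of $e^{-it\ld}$ on $\dot H^1_x$ granted by \eqref{E:equiv I}.
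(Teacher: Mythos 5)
Your overall architecture coincides with the paper's: reduction to a minimal almost periodic blowup solution via a linear profile decomposition plus stability theory, with the far-from-origin profiles handled by approximating with solutions of the translation-invariant equation \eqref{nls} and invoking Theorem~\ref{T:gopher} as a black box (this is the content of Theorem~\ref{T:embed}), followed by a rigidity step based on a spatially truncated virial identity whose coercivity comes from the sharp Hardy inequality and the fact that the minimal solution stays near the origin. Two points, however, need repair.

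First, and most substantively, your rigidity step only addresses a solution that exists globally forward in time: the truncated virial argument derives a contradiction from $\partial_{tt}V_R\gtrsim_u 1$ on an arbitrarily long time interval together with the a priori bound $|\partial_t V_R|\lesssim_u R^2$, and this says nothing if the maximal forward time $T^*$ is finite. You propose to dispose of the remaining scenarios ($N(t)\to\infty$, self-similar/cascade behavior) by appealing to ``negative regularity afforded by \eqref{E:equiv I}'', but \eqref{E:equiv I} is merely the equivalence $\|\sqrt{\ld}f\|_{L^2}\sim\|\nabla f\|_{L^2}$; it provides no additional (let alone negative-order) regularity, and no such reduction to a soliton-like solution with $N(t)\equiv1$ is made in the paper. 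What is actually needed, and what the paper does (Theorem~\ref{T:7.2}), is a separate argument for the case $T^*<\infty$: there $N(t)\to\infty$ as $t\nearrow T^*$, so by almost periodicity the solution has no mass left in any fixed ball as $t\to T^*$; since the truncated mass $M_R(t)$ is Lipschitz in time uniformly in $R$ (by Hardy and the energy bound), this absence of mass propagates backward, and letting $R\to\infty$ forces $u_0\equiv0$, a contradiction. Without this (or an equivalent) ingredient your proof of \eqref{goal} is incomplete.

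Second, a smaller inconsistency: you state that the profile decomposition incorporates scaling and time translations ``but not space translations''. Taken literally this is false --- the defects of compactness of the Strichartz inequality for $e^{-it\ld}$ still include spatial translation parameters $x_n^j$ (a sequence of fixed-scale bubbles marching to infinity does not vanish in $L^{10}_{t,x}$), and the paper's Theorem~\ref{T:LPD} retains them; the point is only that the translated profiles are not generated by a symmetry of \eqref{equ1.1}, which is exactly why the regime $|x_n^j|/\lambda_n^j\to\infty$ must be treated by the embedding theorem rather than inductively. Your subsequent discussion of profiles at various distances from the origin shows you understand this, but the decomposition itself must carry the translation parameters for the argument to close.
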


It is not feasible to prove Theorem~\ref{theorem} by parroting the arguments from \cite{CKSTT07} or the ones from \cite{KV:quintic}, which gives a new proof of Theorem~\ref{T:gopher} in light of the recent advances on dispersive equations at the critical regularity.  Among other things, those arguments rely on the $L_x^1\to L_x^\infty$ dispersive estimate for the propagator $e^{it\Delta}$ and the coercivity of the interaction Morawetz identity, both of which break down for negative values of the coupling constant $a$.  Nevertheless, we will be using Theorem~\ref{T:gopher} as a black-box, as we will explain below.  

The proof of Theorem~\ref{theorem} employs the induction on energy argument pioneered in \cite{Bo99a,CKSTT07} and subsequently reimagined in \cite{KM}.  Some of the ingredients that underlie the Kenig--Merle approach to induction on energy are the ideas of linear and nonlinear profile decompositions from \cite{BahouriGerard} and the notion of a minimal blowup solution from \cite{Keraani}; however, the potential of these ideas for addressing the well-posedness problem for NLS was first realized in \cite{KM}. For a pedagogical introduction to the Kenig--Merle argument illustrating how it may be applied to the case of the defocusing energy-critical NLS without potential, see, for example, \cite{KVnote}.

The argument proceeds by contradiction.  Assuming that Theorem~\ref{theorem} were to fail, one first demonstrates the existence of a minimal counterexample, that is, a solution to \eqref{equ1.1} that has infinite space-time norm and has minimal energy among all such solutions.  The (concentration) compactness argument used to show this existence shows more, namely, that such a minimal blowup solution must be almost periodic (cf. Theorem~\ref{T:mmbs}).  The second half of the argument is to use monotonicity formulae and/or conservation laws to rule out the existence of such solutions.  The almost periodicity is essential here; it provides both a spatial center and a length scale that are intrinsic to the solution at hand.  These are essential for using conservation laws and/or monotonicity formulae that are not translation invariant and/or that do not respect the scaling of the equation.

The proof of the existence of a minimal blowup solution hinges on the construction of a nonlinear profile decomposition, which is applied to a sequence of solutions $u_n$ witnessing the supposed failure of Theorem~\ref{theorem}.  This decomposition says that (after passing to a subsequence and up to a negligible error) the solutions with initial data $u_n(0)$ can be asymptotically expressed as a linear combination of the nonlinear evolutions of a fixed collection of profiles; the profiles are, however, modified (in an $n$-dependent manner) by the symmetries of the equation, namely, space and time translations, as well as scaling. 

Here we see a new obstacle introduced by the presence of a non-zero potential; it breaks the space translation symmetry.  Ultimately, this means that profiles living increasingly far from the origin relative to their intrinsic scale cannot be treated inductively --- they cannot be modeled by a single solution of \eqref{equ1.1} up to symmetries of the equation.  On the other hand, we may expect that the solution stemming from such data is little affected by the potential and can be approximated by a solution to \eqref{nls}, thus inheriting the space-time bounds guaranteed by Theorem~\ref{T:gopher}.  This heuristic is realized in Theorem~\ref{T:embed}, whose proof fills Section~\ref{S:4}.  Both this result and the proof that such profiles decouple from the other nonlinear profiles rely on various convergence statements for linear operators proved in Section~\ref{S:LPD}.  The necessity of proving such convergence statements stems from the absence of $L_x^p\to L_x^{p'}$ dispersive estimates for the propagator $e^{-it\ld}$ when $a<0$; see \cite{PSS1}.  Indeed, one should view \eqref{convg op4} as a substitute for the dispersive estimate in this setting (cf. \cite[Theorem~4.1]{KVZ12}); this is essential for the construction of the minimal blowup solution.  

This paper is not the first to implement the credo that to treat critical dispersive equations with broken symmetries, one must first solve the limiting problems where the symmetries are restored.  Previous examples include \cite{IonPaus1,IonPaus,IPS:H3,Jao1,Jao2,KKSV:gKdV,KOPV,KSV:2DKG,KVZ12,PausTzW}.  This is the first model to be studied, however, which retains scale invariance, but loses space translation invariance.

The credo stated above also explains one of our motivations for studying this particular problem; by its precepts, the study of the energy-critical NLS with a potential such as $V(x)=1/(1+|x|^2)$ relies on a satisfactory treatment of the problem \eqref{equ1.1}, which appears as a scaling limit.  We were further drawn to this problem by the frequency with which the inverse-square potential appears in physical contexts.  Several such instances are discussed in the mathematical papers \cite{BPST1,BPST2,KSWW,VZ,ZZ}, ranging from combustion theory to the Dirac equation with Coulomb potential, and to the study of perturbations of classic space-time metrics such as Schwarzschild and Reissner--Nordstr\"om.

The construction of minimal blowup solutions to \eqref{equ1.1}, under the assumption that Theorem~\ref{theorem} fails, is completed in Section~\ref{S:5}.  Because profiles far from the origin can be discounted by piggybacking on Theorem~\ref{T:gopher} in the manner sketched above, we are guaranteed that such a minimal blowup solution remains close to the origin (relative to its intrinsic scale) for all time.  However, the argument yields no restrictions on the behavior of this characteristic scale $1/N(t)$ beyond the local constancy property inherited from the local well-posedness theory (cf. \cite[Lemma 5.18]{KVnote}). 

By simple rescaling and time-translation arguments, one can extract from any minimal blowup solution a minimal blowup solution that has $N(t)\geq 1$ on half of its evolution, say, for $t\geq 0$.  Indeed, further combinatorial arguments allow one to further confine the behavior of $N(t)$ as shown in \cite[Theorem~1.17]{KV5}; however, the fact that the solution remains centered near the origin makes such refinement unnecessary in this paper.

In order to preclude minimal blowup solutions (and thereby prove Theorem~\ref{theorem}) we consider two cases: (i) the solution blows up in finite time in the future $T^*<\infty$, or (ii) the solution is global in the future $T^*=\infty$.  We exclude the former scenario by using the conservation of mass
\begin{equation}\label{mass}
M(u)=\int_{\mathbb{R}^3}|u(t,x)|^2\, dx;
\end{equation}
see Theorem~\ref{T:7.2}.  By minimality of the solution, finite-time blowup is accompanied by movement of the solution to high frequencies, leaving no mass on any ball of fixed radius.  As mass transportation is controlled by the energy, which is bounded, this absence of mass can be propagated backward in time.  Sending the radius to infinity and then sampling time to $T^*$, we deduce that the minimal blowup solution has zero mass and so is zero itself.  This contradicts the presumption that the solution blows up.

To preclude the case of a solution that exists globally in the future, we use a truncated virial identity.  For a defocussing nonlinearity, it would be more usual to use the interaction Morawetz identity.  The big advantage of the interaction Morawetz identity is that it is insensitive to the spatial location of the solution.   Unfortunately, when $a<0$, the potential produces terms with an unfavorable sign and consequently, we are unable to prove coercivity for the full range of $a$. On the other hand, for the problem treated in this paper we are able to constrain the motion to remain near the origin.  This makes it possible to use a virial argument, for which coercivity follows from the sharp Hardy inequality.  Of course, it is necessary to truncate the usual virial identity in space (it has unbounded coefficients); however, the almost periodicity of the solution guarantees that such a truncation does not destroy the coercivity.  See Theorem~\ref{T:7.1} for details.

Given the use of the virial identity, it is natural to ask for an analogue of Theorem~\ref{theorem} in the focusing case.  In the focusing case, one expects both solitons and blowup, so scattering cannot hold for all finite-energy initial data.  The natural conjecture in this setting is that scattering holds for data below the threshold given by the least energy soliton.  In Section~\ref{S:7} we carry out the requisite variational analysis to determine the natural conjectural threshold for the focusing problem in the presence of an inverse-square potential.  When $a<0$, this threshold is dictated by a soliton centered on the origin.  When $a>0$, the threshold coincides with that associated to the case $a=0$, since solutions occurring in the absence of a potential can be embedded far from the origin.  As evidence of the correctness of these thresholds, we show that the virial identity is coercive below these thresholds and show the viability of the concavity argument to prove blowup above the threshold; see Corollary~\ref{C: trap} and Proposition~\ref{P:blowup}.

The concentration compactness arguments used in this paper carry over to the focusing case with little difficulty.  Thus, to prove the natural threshold conjecture for the focusing equation with inverse-square potential we are missing one last ingredient, namely, an analogue of Theorem~\ref{T:gopher} in the focusing case.  However, the full threshold conjecture for the focusing energy-critical NLS in three dimensions remains open.  It has been resolved in dimensions four and higher \cite{Dod,KV5}, but in three dimensions, it is currently only known for radial data \cite{KM}.

\subsection*{Acknowledgments:}  R. Killip was supported by grant DMS-1265868 from the NSF. C.~Miao was supported by the NSFC under grants
11171033 and 11231006. M. Visan was supported by NSF grant DMS-1500707. J. Zhang was supported by the Excellent Young Scholars
Research Fund of the Beijing Institute of Technology, Beijing Natural Science Foundation (1144014), and by NSFC grant No.~11401024.
J. Zheng was partly supported by the ERC grant SCAPDE.




\section{Preliminaries}

We start by introducing some of the notation used throughout this article.  If $X, Y$ are non-negative quantities, we write $X\lesssim Y $ or $X=O(Y)$ when $X\leq CY$ for some $C$.  We will use the shorthand $A\vee B:=\max\{A, B\}$ and $\langle x\rangle:=\sqrt{1+|x|^2}.$

For $1< r < \infty$, we write $\dot H^{1,r}_a(\R^3)$ and $ H^{1,r}_a(\R^3)$ for the homogeneous and inhomogeneous Sobolev spaces associated with $\ld$, respectively, which have norms
$$
\|f\|_{\dot H^{1,r}_a(\R^3)}= \|\sqrt{\ld} f\|_{L^r(\R^3)} \qtq{and} \|f\|_{H^{1,r}_a(\R^3)}= \|\sqrt{1+ \ld} f\|_{L^r(\R^3)}.
$$
When $r=2$, we simply write $\dot H^{1}_a(\R^3)=\dot H^{1,2}_a(\R^3)$ and $H^{1}_a(\R^3)=H^{1,2}_a(\R^3)$.

Some of the results we will use admit generalizations to dimensions $d\geq 3$.   Note that the operator $\ld$ is positive precisely for $a\geq -(\frac{d-2}2)^2$; this is the sharp form of Hardy's inequality.  Many results have clearer formulations when written in terms of the parameter
$$
\sigma:=\tfrac{d-2}2-\bigr[\bigl(\tfrac{d-2}2\bigr)^2+ a\bigr]^{1/2},
$$
rather than the coupling constant $a$.

Estimates on the heat kernel associated to the operator $\ld$ were found by Liskevich--Sobol \cite{LS} and Milman--Semenov \cite{MS}.

\begin{theorem}[Heat kernel bounds]\label{T:heat} Fix $d\geq 3$ and $a\geq-(\frac{d-2}2)^2$.  There exist positive constants $C_1,C_2$ and $c_1,c_2$ such that for all
$t>0$ and all $x,y\in \R^d\setminus\{0\}$,
\begin{equation*}
C_1\bigl( 1\vee \tfrac{\sqrt t}{|x|} \bigr)^\sigma \bigl( 1\vee \tfrac{\sqrt t}{|y|} \bigr)^\sigma t^{-\frac d2}e^{-\frac{|x-y|^2}{c_1t}}
\leq e^{-t\mathcal L_a}(x,y)\leq C_2\bigl( 1\vee \tfrac{\sqrt t}{|x|} \bigr)^\sigma \bigl( 1\vee \tfrac{\sqrt t}{|y|} \bigr)^\sigma t^{-\frac d2}e^{-\frac{|x-y|^2}{c_2t}}.
\end{equation*}
\end{theorem}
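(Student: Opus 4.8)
The estimate is due to Liskevich--Sobol \cite{LS} and Milman--Semenov \cite{MS}; the route I would take exploits the scale-invariance of $\ld$ together with the ground-state transform. Since $\ld$ commutes with the dilations $f(x)\mapsto f(\lambda x)$, its heat kernel satisfies $e^{-t\ld}(x,y)=t^{-d/2}\,e^{-\ld}(x/\sqrt t,\,y/\sqrt t)$, so it suffices to prove a fixed-time bound; the asserted two-sided estimate is then simply its scale-invariant reformulation. The function $h(x):=|x|^{-\sigma}$ is a positive distributional solution of $\ld h=0$ --- this is precisely why $\sigma=\tfrac{d-2}2-[(\tfrac{d-2}2)^2+a]^{1/2}$ was singled out, as $-\Delta|x|^{-\sigma}=\sigma(d-2-\sigma)|x|^{-\sigma-2}$ forces $a=\sigma(\sigma-d+2)$ --- and a short computation gives
$$
h^{-1}\,\ld\,(h f)=-\Delta f+2\sigma\,\tfrac{x}{|x|^2}\cdot\nabla f=-|x|^{2\sigma}\,\nabla\!\cdot\!\bigl(|x|^{-2\sigma}\nabla f\bigr).
$$
Thus the unitary map $g\mapsto hg$ from $L^2(d\mu)$ onto $L^2(dx)$, with $d\mu:=|x|^{-2\sigma}\,dx$, conjugates $\ld$ to the nonnegative self-adjoint operator $\mathcal L^\mu$ attached to the Dirichlet form $\int|\nabla f|^2\,d\mu$; correspondingly $e^{-t\ld}(x,y)=h(x)h(y)\,q_\mu(t,x,y)$, where $q_\mu(t,\cdot,\cdot)$ is the heat kernel of $e^{-t\mathcal L^\mu}$ with respect to $\mu$.

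It remains to estimate $q_\mu$. Because $|x|^{-2\sigma}$ is a power weight with exponent $-2\sigma>-d$ (note $\sigma<\tfrac{d-2}2<\tfrac d2$), the metric measure space $(\R^d,|\cdot|,\mu)$ is volume doubling and admits a scale-invariant $L^2$ Poincaré inequality on balls; granting these, the Grigor'yan--Saloff-Coste equivalence with the parabolic Harnack inequality yields
$$
q_\mu(t,x,y)\sim \frac{1}{\sqrt{\mu\big(B_{\sqrt t}(x)\big)\,\mu\big(B_{\sqrt t}(y)\big)}}\;\exp\Bigl(-\tfrac{|x-y|^2}{c\,t}\Bigr),
$$
with, as usual, a larger constant $c$ on the upper inequality than on the lower one. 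A direct computation gives $\mu\big(B_{\sqrt t}(x)\big)\sim t^{d/2}\big(|x|\vee\sqrt t\big)^{-2\sigma}$: either $B_{\sqrt t}(x)$ avoids the origin, where $d\mu\sim|x|^{-2\sigma}dx$, or it is comparable to $B_{c\sqrt t}(0)$, where $\mu\big(B_{c\sqrt t}(0)\big)\sim t^{(d-2\sigma)/2}$. Hence
$$
h(x)\,\mu\big(B_{\sqrt t}(x)\big)^{-1/2}\sim |x|^{-\sigma}\,t^{-d/4}\big(|x|\vee\sqrt t\big)^{\sigma}=t^{-d/4}\bigl(1\vee\tfrac{\sqrt t}{|x|}\bigr)^{\sigma},
$$
and inserting the corresponding factor in $y$ into the Li--Yau bound for $q_\mu$ reproduces exactly the asserted inequalities.

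The genuinely delicate point is the scale-invariant Poincaré inequality for $d\mu=|x|^{-2\sigma}\,dx$ in the range where $|x|^{-2\sigma}$ is not an $A_2$ weight --- this occurs for large $a$, where $-2\sigma$ may be as large as one likes, so the weight degenerates strongly at the origin and one cannot invoke the standard $A_2$-weighted theory but must argue directly with this cone-type weight. A second subtlety is to verify that the $h$-transform above really produces the \emph{Friedrichs} extension in the range $-\tfrac{(d-2)^2}{4}\le a<1-\tfrac{(d-2)^2}{4}$ where $-\Delta+a|x|^{-2}$ has more than one self-adjoint extension; this amounts to checking that the Friedrichs form domain is the closure of $C_c^\infty(\R^d\setminus\{0\})$, which is what makes $h=|x|^{-\sigma}$ --- rather than the more singular competitor $|x|^{-\sigma'}$ with $\sigma'=\tfrac{d-2}2+[(\tfrac{d-2}2)^2+a]^{1/2}$ --- the relevant ground state. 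A more hands-on alternative bypasses these issues entirely: expanding $e^{-t\ld}$ in spherical harmonics, on the $\ell$-th mode $\ld$ acts as a Bessel-type operator of index $\nu_\ell=[(\ell+\tfrac{d-2}2)^2+a]^{1/2}$ whose heat kernel is an explicit expression in the modified Bessel function $I_{\nu_\ell}$, and one then sums the series using uniform estimates on $I_\nu$; this trades the analytic subtleties above for a heavier special-function computation but yields the same two-sided bound.
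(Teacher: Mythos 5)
The paper states this theorem as a known result of Liskevich--Sobol \cite{LS} and Milman--Semenov \cite{MS} and gives no proof, so there is no internal argument to compare against. Your sketch is correct and is the ground-state-transform route in the spirit of \cite{MS}: the identity $a=\sigma(\sigma-d+2)$, the conjugation $h^{-1}\ld(hf)=-|x|^{2\sigma}\nabla\!\cdot\!\bigl(|x|^{-2\sigma}\nabla f\bigr)$ with $h=|x|^{-\sigma}$, the scaling relation $e^{-t\ld}(x,y)=t^{-d/2}e^{-\ld}(x/\sqrt t,y/\sqrt t)$, and the volume estimate $\mu\bigl(B_{\sqrt t}(x)\bigr)\sim t^{d/2}\bigl(|x|\vee\sqrt t\bigr)^{-2\sigma}$ all check out, and inserting the Li--Yau two-sided bound for $q_\mu$ together with the prefactor $h(x)h(y)$ reproduces exactly the stated estimate. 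You also correctly isolate the two steps that genuinely require work and cannot be quoted from the standard $A_2$-weighted theory: the scale-invariant Poincar\'e inequality for the cone weight $|x|^{-2\sigma}\,dx$ in the regime where it fails $A_2$ (for $a>0$ one can have $-2\sigma\geq d$), and the verification that the Friedrichs extension corresponds to the ground state $|x|^{-\sigma}$ rather than the more singular $|x|^{-\sigma'}$ in the range $-\bigl(\tfrac{d-2}2\bigr)^2\leq a<1-\bigl(\tfrac{d-2}2\bigr)^2$. The Bessel-function alternative you describe (spherical-harmonic decomposition, explicit radial heat kernels with index $\nu_\ell=[(\ell+\tfrac{d-2}2)^2+a]^{1/2}$, uniform asymptotics for $I_\nu$) is the other standard route and is likewise sound.
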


These estimates formed the starting point of the analysis in \cite{KMVZZ}, which develops a number of basic harmonic analytic tools that we will use in this paper.  Foremost among these is the following theorem, which tells us when Sobolev spaces defined through powers of $\ld$ coincide with the traditional Sobolev spaces. 

\begin{theorem}[Equivalence of Sobolev spaces]\label{pro:equivsobolev} Fix $d\geq 3$, $a\geq-(\tfrac{d-2}2)^2$, and $0<s<2$. If $1<p<\infty$ satisfies
$\frac{s+\sigma}d<\frac1p<\min\{1, \frac{d-\sigma}d\}$, then
\begin{equation}\label{RieszT}
\big\|(-\Delta)^\frac{s}2f\big\|_{L^p}\lesssim_{d,p,s}\big\|\ld^\frac{s}2 f\big\|_{L^p} \quad \text{for all}~ f\in C_c^\infty(\R^d\setminus\{0\}).
\end{equation}
If $\max\{\frac sd, \frac\sigma d\}<\frac1p<\min\{1,\frac{d-\sigma}d\}$, which ensures already that $1<p<\infty$, then
\begin{equation}\label{RieszT'}
\big\|\ld^\frac{s}2f\big\|_{L^p}\lesssim_{d,p,s}\big\|(-\Delta)^\frac{s}2f\big\|_{L^p}\quad\text{for all}~ f\in C_c^\infty(\R^d\setminus\{0\}).
\end{equation}
\end{theorem}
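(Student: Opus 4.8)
The plan is to prove \eqref{RieszT} and \eqref{RieszT'} separately; these are not equivalent by duality, since the $L^p$-boundedness of $(-\Delta)^{s/2}\ld^{-s/2}$ amounts, after taking adjoints, to the $L^{p'}$-boundedness of the formal \emph{inverse} of $\ld^{s/2}(-\Delta)^{-s/2}$, not of that operator itself — consistent with the two ranges in the theorem being genuinely different. Thus for \eqref{RieszT} it suffices to show $(-\Delta)^{s/2}\ld^{-s/2}$ is bounded on $L^p$ when $\tfrac{s+\sigma}d<\tfrac1p<\min\{1,\tfrac{d-\sigma}d\}$, and for \eqref{RieszT'} that $\ld^{s/2}(-\Delta)^{-s/2}$ is bounded on $L^p$ when $\max\{\tfrac sd,\tfrac\sigma d\}<\tfrac1p<\min\{1,\tfrac{d-\sigma}d\}$; by density one may work with $f\in C_c^\infty(\R^d\setminus\{0\})$ throughout. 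In each case one first peels off the identity, writing e.g.
\[
(-\Delta)^{s/2}\ld^{-s/2} = I + (-\Delta)^{s/2}\bigl[\ld^{-s/2} - (-\Delta)^{-s/2}\bigr],
\]
so that everything reduces to controlling the difference of the two fractional integrals.

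The engine is Theorem~\ref{T:heat} together with subordination. For $0<s<2$ one has $\ld^{-s/2} = c_s\int_0^\infty e^{-t\ld}\,t^{s/2}\,\tfrac{dt}t$, the analogous formula for $(-\Delta)^{-s/2}$, and $(-\Delta)^{s/2} = c_s'\int_0^\infty\bigl(I - e^{t\Delta}\bigr)\,t^{-s/2}\,\tfrac{dt}t$; moreover the Duhamel identity
\[
e^{t\Delta} - e^{-t\ld} = a\int_0^t e^{(t-r)\Delta}\,\tfrac1{|x|^2}\,e^{-r\ld}\,dr,
\]
which follows from $\ld + \Delta = a|x|^{-2}$, expresses $\ld^{-s/2}-(-\Delta)^{-s/2}$ as an explicit superposition of free heat propagators sandwiching the singular weight $|x|^{-2}$ against $e^{-r\ld}$. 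Inserting the two-sided Gaussian bounds of Theorem~\ref{T:heat} — with their anomalous factors $(1\vee\tfrac{\sqrt t}{|x|})^{\sigma}$ — reduces the matter to $L^p$-estimates for a family of integral kernels assembled from the free heat kernel, the weight $|x|^{-2}$, and the homogeneous weights $(\sqrt t/|x|)^{\sigma}$. These are then dispatched by Schur tests and weighted Young / Hardy--Rellich inequalities, the $t$- and $r$-integrations converging precisely because $0<s<2$ (and because $(-\Delta)^{s/2}e^{t\Delta}$ retains Gaussian-type decay with the scaling loss $t^{-s/2}$).

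The crux — and the origin of the sharp window for $\tfrac1p$ — is the behavior near the spatial origin. When $a<0$, so $\sigma>0$, the factor $(\sqrt t/|x|)^\sigma$ amplifies the $|x|^{-2}$ singularity, and the resulting weighted operator is bounded on $L^p$ only for $\tfrac1p>\tfrac{s+\sigma}d$ in the setting of \eqref{RieszT} (respectively $\tfrac1p>\tfrac\sigma d$ for \eqref{RieszT'}); symmetrically, the constraint as $|x|\to\infty$ forces $\tfrac1p<\tfrac{d-\sigma}d$, while the classical fractional Hardy inequality contributes $\tfrac1p>\tfrac sd$ in \eqref{RieszT'}. Concretely, the weighted bounds one needs are of Hardy--Rellich type — control of $\bigl\||x|^{-\alpha}g\bigr\|_p$ by $\bigl\|\ld^{\alpha/2}g\bigr\|_p$ and the reverse with $(-\Delta)^{\alpha/2}$ — whose ranges of validity are exactly the inequalities on $\tfrac1p$ in the statement. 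I expect the genuine difficulty to lie not in any single estimate but in the region where every singularity is present at once (small $t$, small $r$, $x$ near $0$, $x$ near $y$): there one must split the weight $|x|^{-2}$ judiciously between the two heat kernels flanking it, so that each time integral converges and no power of $|x|$ lands on the side that would violate the $L^p$ constraint.

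Finally, although sharpness is not asserted in the statement above, the range is seen to be optimal by testing both inequalities against smooth truncations of the generalized ground state $|x|^{-\sigma}$ of $\ld$ (for which $\ld|x|^{-\sigma}=0$ away from the origin), localized near $0$ or near $\infty$; these saturate the endpoint conditions $\tfrac1p=\tfrac{s+\sigma}d$ and $\tfrac1p=\tfrac{d-\sigma}d$, and it is precisely here that the \emph{lower} bounds in Theorem~\ref{T:heat} are used.
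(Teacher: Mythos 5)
The first thing to note is that this paper does not prove Theorem~\ref{pro:equivsobolev}. It is quoted verbatim from the companion paper [KMVZZ]: the surrounding text explains that the determination of these sharp ranges ``was the main result of the paper [KMVZZ]'' and that it is repeated here ``for ready accessibility.'' There is therefore no in-paper proof to compare your argument against.

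With that caveat, your identification of the toolkit is correct and matches what [KMVZZ] makes its foundation: the two-sided heat kernel bounds of Theorem~\ref{T:heat} (which the present paper explicitly calls ``the starting point of the analysis in [KMVZZ]''), subordination formulae for fractional powers, and perturbative Duhamel comparison of $e^{-t\ld}$ with $e^{t\Delta}$. You are also right that the two inequalities are not interchangeable by duality --- they are $L^p$-boundedness of an operator and of its \emph{inverse}, respectively --- and you read off both windows of $\tfrac1p$ and the sharpness test functions (truncations of $|x|^{-\sigma}$) correctly.

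The gap is the step you yourself single out as the crux and then do not carry out. After writing $(-\Delta)^{s/2}\ld^{-s/2}-I$ via one subordination integral for $\ld^{-s/2}$, a second subordination integral for $(-\Delta)^{s/2}$, and a Duhamel integral in $r$, the resulting object carries, on top of the intermediate spatial variable $z$ and the subtracted identity, the simultaneous singular factors $\tau^{-s/2}$, $(t-r)^{-d/2}$, $r^{-d/2}$, $|z|^{-2}$, $(\sqrt{r}/|z|)^\sigma$, and $(\sqrt{r}/|y|)^\sigma$, all competing in the regime $r,\, t-r,\, \tau,\, |z|\to 0$. Asserting that this is ``dispatched by Schur tests and weighted Young / Hardy--Rellich inequalities,'' and that one ``splits the weight judiciously between the two heat kernels,'' is a description of the difficulty rather than a resolution of it: the entire content of the theorem, and in particular the precise window of $\tfrac1p$, has to emerge from exactly this estimate, and your proposal does not make that emergence visible. [KMVZZ] is not organized as a single direct estimate of such a stacked kernel; it first builds Mikhlin-type multiplier and Littlewood--Paley square function theory for $\ld$ --- the very tools the present paper quotes alongside Theorem~\ref{pro:equivsobolev} as Lemma~\ref{L:Bernie} and Lemma~\ref{T:sq} --- precisely so that the Sobolev equivalence can be handled one length scale at a time rather than with all singularities colliding at once. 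As written, your reduction is not easier than the theorem; it is the theorem.
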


Our main result, Theorem~\ref{theorem}, is restricted to the range $a>-\frac14+\frac1{25}$.  As mentioned in the Introduction, this is needed to prove even local well-posedness via the usual methodology.  The restriction  $a>-\frac14+\frac1{25}$ guarantees that $\sigma < \frac3{10}$ and consequently,
$$
\big\|\nabla f\big\|_{L^p(\R^3)} \sim \big\| \sqrt{\ld} f\big\|_{L^p(\R^3)} \qtq{for all}  \tfrac65 \leq p \leq \tfrac{30}{13}.
$$

The $L^p$-product rule for fractional derivatives in Euclidean spaces was first proved by Christ and Weinstein \cite{ChW:fractional chain rule}.  Combining their result with Theorem~\ref{pro:equivsobolev} yields an analogous statement for the operator $\ld$.  We record here only the particular cases that will be needed in this paper.

\begin{lemma}[Fractional product rule]\label{L:Leibnitz}
Fix $a>-\frac14+\frac1{25}$.  Then for all $f, g\in C_c^{\infty}(\R^3\setminus\{0\})$ we have
\begin{align*}
\| \sqrt{\ld}(fg)\|_{L^p(\R^3)} \lesssim \| \sqrt{\ld} f\|_{L^{p_1}(\R^3)}\|g\|_{L^{p_2}(\R^3)}+\|f\|_{L^{q_1}(\R^3)}\| \sqrt{\ld} g\|_{L^{q_2}(\R^3)},
\end{align*}
for any exponents satisfying $\frac65\leq p, p_1, p_2, q_1, q_2\leq \frac{30}{13}$ and $\frac1p=\frac1{p_1}+\frac1{p_2}=\frac1{q_1}+\frac1{q_2}$.
\end{lemma}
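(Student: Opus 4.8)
The plan is to transfer the inequality from the Euclidean Laplacian to $\ld$ by means of the equivalence of Sobolev spaces, Theorem~\ref{pro:equivsobolev}. First note that $fg$ again lies in $C_c^\infty(\R^3\setminus\{0\})$, so that the conclusion of Theorem~\ref{pro:equivsobolev} is available for $fg$ as well as for $f$ and $g$ individually. Taking $d=3$, $s=1$, and using that the hypothesis $a>-\frac14+\frac1{25}$ is precisely the statement $\sigma<\frac3{10}$, one checks that every $r$ with $\frac65\le r\le\frac{30}{13}$ satisfies $\frac{1+\sigma}{3}<\frac1r$ as well as $\max\{\frac13,\frac\sigma3\}<\frac1r<\min\{1,\frac{3-\sigma}{3}\}$; hence both \eqref{RieszT} and \eqref{RieszT'} apply at such $r$, and together with the $L^r(\R^3)$-boundedness of the Riesz transform they yield
$$
\|\sqrt{\ld}\,h\|_{L^r(\R^3)}\sim\|\nabla h\|_{L^r(\R^3)},\qquad h\in C_c^\infty(\R^3\setminus\{0\}),\quad \tfrac65\le r\le\tfrac{30}{13},
$$
which is the equivalence already recorded above.

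Applying this with $h=fg$ and $r=p$ reduces the claim to the Euclidean product rule
$$
\|\nabla(fg)\|_{L^p(\R^3)}\lesssim\|\nabla f\|_{L^{p_1}(\R^3)}\|g\|_{L^{p_2}(\R^3)}+\|f\|_{L^{q_1}(\R^3)}\|\nabla g\|_{L^{q_2}(\R^3)}.
$$
For a single derivative this is immediate from $\nabla(fg)=(\nabla f)g+f\nabla g$ followed by H\"older's inequality; it is in any case a special case of the $L^p$ fractional product rule of Christ and Weinstein~\cite{ChW:fractional chain rule}. The H\"older relations $\frac1p=\frac1{p_1}+\frac1{p_2}=\frac1{q_1}+\frac1{q_2}$ are exactly those of the statement, and all five exponents lie in $(1,\infty)$ since they are confined to $[\frac65,\frac{30}{13}]$. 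Finally, invoke the Sobolev equivalence once more, now with $(h,r)=(f,p_1)$ and $(h,r)=(g,q_2)$, to replace $\|\nabla f\|_{L^{p_1}}$ by $\|\sqrt{\ld}f\|_{L^{p_1}}$ and $\|\nabla g\|_{L^{q_2}}$ by $\|\sqrt{\ld}g\|_{L^{q_2}}$. Concatenating the three estimates proves the lemma.

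There is no genuine obstacle here: the argument is simply the composition of Theorem~\ref{pro:equivsobolev} with a classical Euclidean inequality, and the only real work is bookkeeping --- checking that each exponent at which the Sobolev equivalence is invoked, namely $p$, $p_1$, and $q_2$, lies in the window $[\frac65,\frac{30}{13}]$ on which the equivalence holds, and that $p_1,p_2,q_1,q_2$ are admissible in the product rule (automatic, as none can degenerate to $1$ or $\infty$). It is worth recording which constraint is sharp: the inequality $\frac{1+\sigma}{3}<\frac1r$ needed for $\|\nabla h\|_{L^r}\lesssim\|\sqrt{\ld}h\|_{L^r}$ degenerates precisely at $r=\frac{30}{13}$ when $\sigma=\frac3{10}$, which is exactly why the hypothesis must be $a>-\frac14+\frac1{25}$ rather than merely $a>-\frac14$.
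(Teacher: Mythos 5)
Your proof is correct and follows exactly the route the paper intends: the paper offers no displayed proof, merely noting that the lemma is a consequence of the Christ--Weinstein product rule together with Theorem~\ref{pro:equivsobolev}, and your argument is a careful unpacking of that one-line justification (including the exponent checks showing $\frac65\le r\le\frac{30}{13}$ is precisely the window where both directions of the Sobolev equivalence hold under $\sigma<\frac3{10}$). The one small bonus in your write-up is the observation that because only one derivative is involved, the Euclidean input reduces to the pointwise Leibniz rule plus H\"older and Riesz-transform boundedness, so Christ--Weinstein is not actually needed here; that is a legitimate simplification.
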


Let $\phi:[0,\infty)\to[0,1]$ be a smooth positive function obeying
\begin{align*}
\phi(\lambda)=1 \qtq{for} 0\le\lambda\le 1 \qtq{and} \phi(\lambda)=0\qtq{for}\lambda\ge 2.
\end{align*}
For each dyadic number $N\in 2^\Z$, we define
\begin{align*}
\phi_N(\lambda):=\phi(\lambda/N) \qtq{and} \psi_N(\lambda):=\phi_N(\lambda)-\phi_{N/2}(\lambda).
\end{align*}
It is clear that $\{\psi_N(\lambda)\}_{N\in 2^{\Z}} $ is a partition of unity for $(0,\infty)$.  We define the Littlewood--Paley projections via
\begin{gather*}
f_{\leq N}:=P_{\le N}^a f:= \phi_N\bigl(\sqrt{\ld}\,\bigr), \qquad f_N:=P^a_N f:=\psi_N(\sqrt{\ld}\,\bigr),\\
\text{and} \qquad \ f_{>N}:= P^a_{>N} f:=(I-P^a_{\le N})f.
\end{gather*}
We will also make use of Littlewood--Paley projections defined via the heat kernel:
\begin{align*}
\tilde P_N^a:=e^{-\mathcal L_a/N^2}-e^{-4\mathcal L_a/N^2}.
\end{align*}

Next we recall the following Bernstein and square function estimates, which were proved in \cite{KMVZZ}:

\begin{lemma}[Bernstein estimates]\label{L:Bernie}
For $1<p\leq q\le \infty$ when $a\geq0$ or $r_0<p\leq q<r_0':=\frac{d}\sigma$ when $-(\frac{d-2}2)^2\leq a<0$, the following hold:

\noindent$(1)$ The operators $P^a_{\le N}$, $P^a_{N}$, and $\tilde P_N^a$ are bounded on $L^p$.

\noindent$(2)$ The operators $P^a_{\le N}$, $P^a_{N}$, and $\tilde P_N^a$ map $L^p$ to $L^q$ with norm $O(N^{\frac dp-\frac dq})$.

\noindent$(3)$ For any $s\in \R$,
$$N^s\|P^a_N f\|_{L_x^p}  \sim \bigl\|(\ld)^{\frac s2}P^a_N f\bigr\|_{L_x^p}\qtq{and} N^s\|\tilde P^a_N f\|_{L_x^p}  \sim \bigl\|(\ld)^{\frac s2}\tilde P^a_N f\bigr\|_{L_x^p}.$$
\end{lemma}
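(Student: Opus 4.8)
The plan is to reduce the three assertions to two facts, both consequences of the Gaussian heat-kernel bounds of Theorem~\ref{T:heat}: sharp weighted mapping properties of the heat propagator $e^{-t\ld}$, and a Mikhlin--H\"ormander spectral multiplier theorem for $\ld$ valid precisely on the $L^p$ window in the statement (the latter being one of the harmonic-analytic tools developed in \cite{KMVZZ}).

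First I would record, by evaluating Theorem~\ref{T:heat} at $t=N^{-2}$ and $t=4N^{-2}$, the pointwise bound
$$
\bigl|\tilde P_N^a(x,y)\bigr|\lesssim N^{d}\bigl(1\vee\tfrac{1}{N|x|}\bigr)^{\sigma}\bigl(1\vee\tfrac{1}{N|y|}\bigr)^{\sigma}e^{-cN^{2}|x-y|^{2}},
$$
and the same bound for the kernel of $e^{-\ld/N^{2}}$. When $a\ge0$ one has $\sigma\le0$, the weights are bounded by $1$, and Young's inequality shows that $e^{-t\ld}$, hence $\tilde P_N^a$, maps $L^{p}\to L^{q}$ with norm $O(N^{d/p-d/q})$ for all $1\le p\le q\le\infty$. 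When $a<0$ the weight $(1\vee\tfrac1{N|x|})^{\sigma}$ is comparable to $(N|x|)^{-\sigma}$ near the origin; it is locally in $L^{p'}$ exactly when $p>r_0$ and locally in $L^{q}$ exactly when $q<r_0'$, and granting this the Schur test (applied with these weights) again delivers the $L^{p}\to L^{q}$ bound with the stated power of $N$. This is the only place the range restriction enters. It proves $(1)$ and $(2)$ for $\tilde P_N^a$; it also yields them for $P^a_{\le N}$ once the multiplier theorem below is in hand, since $P^a_{\le N}=h(\ld/N^{2})\,e^{-\ld/N^{2}}$ with $h(\mu):=\phi(\sqrt{\mu})\,e^{\mu}\in C^\infty_c([0,\infty))$.

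Next I would invoke the spectral multiplier theorem: there is an integer $k_d$ so that every $m\colon(0,\infty)\to\C$ with $|\partial^{k}m(\lambda)|\lesssim\lambda^{-k}$ for $0\le k\le k_d$ yields an operator $m(\sqrt{\ld}\,)$ bounded on $L^{p}(\R^{d})$ for all $p$ in the stated range, with norm controlled by these constants. Applying it to $\lambda\mapsto\phi(\lambda/N)$ and $\lambda\mapsto\psi(\lambda/N)$---whose derivative bounds are uniform in $N$---gives $(1)$ for $P^a_{\le N}$ and $P^a_N$. For the $L^p\to L^q$ estimate for $P^a_N$ I would use that $\psi$ vanishes to infinite order at the endpoints of its support, so that $g(\mu):=\psi(\sqrt{\mu})/(e^{-\mu}-e^{-4\mu})$ is an element of $C^\infty_c((0,\infty))$, hence a multiplier of the above type (uniformly after rescaling); since $P^a_N=g(\ld/N^{2})\,\tilde P_N^a$, the operator $P^a_N\colon L^p\to L^q$ factors as the $O(N^{d/p-d/q})$ map $\tilde P_N^a$ followed by an $L^q$-bounded operator. (The endpoint $q=\infty$, which occurs only when $a\ge0$, is handled by inserting a fattened heat-kernel projection acting as the identity on the range of $P^a_N$.) The bound for $P^a_{\le N}$ was already noted. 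Finally, for $(3)$, writing $\psi_s(\lambda):=\lambda^{s}\psi(\lambda)$---again a bump adapted to $\lambda\sim1$---one has $(\ld)^{s/2}P^a_N=N^{s}\psi_s(\sqrt{\ld}/N)$, so it suffices to show $\norm{\psi_s(\sqrt{\ld}/N)f}_{L^{p}}\sim\norm{P^a_N f}_{L^{p}}$; choosing $\Psi\in C^\infty_c((0,\infty))$ equal to $1$ on $\supp\psi\cup\supp\psi_s$ and writing $\psi=(\psi/\psi_s)\psi_s$ and $\psi_s=(\psi_s/\psi)\psi$ with the quotients extended smoothly, both $\psi(\sqrt{\ld}/N)$ and $\psi_s(\sqrt{\ld}/N)$ become a multiplier of the above type composed with $\Psi(\sqrt{\ld}/N)$, whose $L^p$-boundedness then yields the two-sided estimate. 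Replacing the pair $(\psi,\psi_s)$ by $\bigl(e^{-\mu}-e^{-4\mu},\,\mu^{s/2}(e^{-\mu}-e^{-4\mu})\bigr)$ gives the corresponding statement for $\tilde P_N^a$.

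The main obstacle is the spectral multiplier theorem used in the last paragraph: one must upgrade the Gaussian bounds of Theorem~\ref{T:heat} to a Mikhlin--H\"ormander functional calculus for $\ld$, and do so on exactly the window $r_0<p<r_0'$ forced by the $|x|^{-\sigma}$ singularity of the heat-kernel weight. Everything else is a Schur-test/Young's-inequality computation for the heat propagator together with soft manipulations of compactly supported symbols.
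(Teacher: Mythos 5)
The paper itself gives no proof of this lemma; it is quoted from \cite{KMVZZ}, whose main theorem is exactly the Mikhlin-type spectral multiplier theorem you black-box, so your overall route --- Theorem~\ref{T:heat} at $t\sim N^{-2}$ for the weighted Gaussian kernel bound, the multiplier theorem on the window $r_0<p<r_0'$, and the factorizations $P^a_{\le N}=h(\ld/N^2)e^{-\ld/N^2}$ and $P^a_N=g(\ld/N^2)\tilde P^a_N$ --- is the intended one, and parts (1), (2) and the $P^a_N$ half of (3) are essentially right as you set them up. Two repairs are needed in the details. First, for $p<q$ the ``Schur test'' is not the right tool (it gives $p=q$); what works is H\"older in $y$, using $|y|^{-\sigma}\in L^{p'}_{\mathrm{loc}}$ iff $p>r_0$, followed by integrating the $x$-weight locally in $L^q$ (iff $q<r_0'$), after splitting the kernel into the regions $|x|\lessgtr N^{-1}$, $|y|\lessgtr N^{-1}$ and letting the Gaussian handle the far region. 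Second, the endpoint $p=q=\infty$ (allowed when $a\ge0$) is not reached by composing with the multiplier theorem: your ``fattened heat-kernel projection'' does not act as the identity on the range of $P^a_N$, and commuting the factorization ($P^a_N=\tilde P^a_N\,g(\ld/N^2)$) only covers $q=\infty$ with $p<\infty$; genuinely bounding $P^a_N$, $P^a_{\le N}$ on $L^\infty$ requires pointwise kernel bounds for compactly supported multipliers $F(\sqrt{\ld})$, which is standard under Gaussian heat-kernel bounds but is an extra ingredient you have not supplied.

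The genuine gap is part (3) for $\tilde P^a_N$. The quotients you propose there are $\mu^{\pm s/2}$, which are unbounded on $(0,\infty)$ --- and $(0,\infty)$ is the full support of $e^{-\mu}-e^{-4\mu}$ --- so they are not Mikhlin multipliers and the quotient trick that works for $\psi$ (supported in $[\frac12,2]$, where the ratio is just a smooth compactly supported extension of $\lambda^{\pm s}$) collapses. Moreover, no argument can close this step as literally stated: already for $a=0$, $p=2$, Plancherel shows that on data with $\hat f$ supported where $|\xi|\sim MN$ (resp.\ $|\xi|\sim N/M$) the ratio of $\|(\ld)^{s/2}\tilde P^a_Nf\|_{L^2}$ to $N^s\|\tilde P^a_Nf\|_{L^2}$ is comparable to $M^{s}$ (resp.\ $M^{-s}$), which is unbounded as $M\to\infty$ for $s\neq0$, because the symbol of $\tilde P^a_N$ does not localize the spectrum to $\lambda\sim N$. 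What is true --- and what the paper actually uses for $\tilde P^a_N$ (e.g.\ $\|\tilde P^a_Nf\|_{L^2}\lesssim N^{-1}\|\sqrt{\ld}f\|_{L^2}$ in the inverse Strichartz argument) --- are the one-sided Bernstein bounds with $f$ itself on the right: $\|(\ld)^{s/2}\tilde P^a_Nf\|_{L^p}\lesssim N^s\|f\|_{L^p}$ and $\|\tilde P^a_Nf\|_{L^p}\lesssim N^{-s}\|(\ld)^{s/2}f\|_{L^p}$ for $0\le s\le2$; these do follow from your framework, since $\nu^{\pm s}\bigl(e^{-\nu^2}-e^{-4\nu^2}\bigr)$ obeys Mikhlin bounds for such $s$. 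So you should either prove these one-sided versions for $\tilde P^a_N$ or confine the two-sided equivalence in (3) to the sharp projections $P^a_N$.
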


\begin{lemma}[Square function]\label{T:sq}
Fix $0\leq s<2$.  For $1<p<\infty$ when $a\geq0$ or $r_0<p<r_0':=\frac d{\sigma}$ when $-(\frac{d-2}2)^2\leq a<0$ and any $f\in C_c^{\infty}(\R^d\setminus\{0\})$,
\begin{align*}
\biggl\|\biggl(\sum_{N\in2^\Z} N^{2s}| P^a_N f|^2\biggr)^{\!\!\frac 12}\biggr\|_{L^p} \sim \|\ld^{\frac s2}f\|_{L^p}.
\end{align*}
\end{lemma}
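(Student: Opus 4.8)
The plan is to deduce the square function estimate from the classical Littlewood--Paley machinery adapted to $\ld$: a Mikhlin--H\"ormander spectral multiplier theorem for $\ld$, Khintchine's inequality, and duality. First I would reduce to the case $s=0$. Setting $g:=\ld^{s/2}f$ and using $\psi_N(\lambda)=\psi_1(\lambda/N)$ together with the Hilbert-space functional calculus, one has $N^s\psi_N(\sqrt{\ld})f=\eta_N(\sqrt{\ld})g$, where $\eta(\lambda):=\lambda^{-s}\psi_1(\lambda)$ is smooth and supported in the compact subset $[\tfrac12,2]$ of $(0,\infty)$ and $\eta_N(\lambda):=\eta(\lambda/N)$. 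Hence it suffices to prove that for \emph{any} smooth $\eta$ supported in a compact subset of $(0,\infty)$,
$$\Bigl\|\Bigl(\sum_{N\in2^\Z}|\eta_N(\sqrt{\ld})g|^2\Bigr)^{1/2}\Bigr\|_{L^p}\sim\|g\|_{L^p}$$
for $g$ in a suitable dense class and $p$ in the indicated range; the lemma then follows by translating back, using $\ld^{s/2}P_N^a f=P_N^a\ld^{s/2}f$.

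For the upper bound, Khintchine's inequality reduces matters to the uniform estimate $\big(\mathbb E\,\|m_\varepsilon(\sqrt{\ld})g\|_{L^p}^p\big)^{1/p}\lesssim\|g\|_{L^p}$ over sign sequences $\varepsilon=(\varepsilon_N)$, where $m_\varepsilon(\lambda):=\sum_N\varepsilon_N\eta(\lambda/N)$. Since the supports of the $\eta(\cdot/N)$ have bounded overlap, $|\lambda^k m_\varepsilon^{(k)}(\lambda)|\lesssim_k 1$ uniformly in $\varepsilon$, so $m_\varepsilon$ is a Mikhlin--H\"ormander symbol. The problem thus becomes: Mikhlin--H\"ormander multipliers of $\sqrt{\ld}$ are bounded on $L^p(\R^d)$ for $1<p<\infty$ when $a\ge0$, and for $r_0<p<r_0'$ when $a<0$. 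This is where the heat kernel bounds of Theorem~\ref{T:heat} enter: the Gaussian upper bound, with its weights $(1\vee\tfrac{\sqrt t}{|x|})^\sigma(1\vee\tfrac{\sqrt t}{|y|})^\sigma$, supplies precisely the weighted off-diagonal $L^{r_0}\!\to\!L^2$ and $L^2\!\to\!L^{r_0'}$ estimates for $e^{-t\ld}$ that the generalized Calder\'on--Zygmund theory for operators lacking pointwise kernel bounds requires (in the spirit of Blunck--Kunstmann and Duong--Ouhabaz--Sikora), and the weight exponent $\sigma$ is exactly what forces the restriction to $r_0<p<r_0'$ in the negative-coupling case. Such a multiplier theorem is established in \cite{KMVZZ}, and I would invoke it (or reproduce its proof along these lines).

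The lower bound I would obtain by duality. Because $\psi_1\ge0$, the symbol $\Xi(\lambda):=\sum_N\eta_N(\lambda)^2$ is smooth, bounded above and below by positive constants, and of Mikhlin--H\"ormander type, so $\Xi(\sqrt{\ld})^{-1}$ is bounded on both $L^p$ and $L^{p'}$ by the multiplier theorem. Writing $g=\Xi(\sqrt{\ld})^{-1}\sum_N\eta_N(\sqrt{\ld})\bigl[\eta_N(\sqrt{\ld})g\bigr]$, pairing against $h\in L^{p'}$, and using the self-adjointness of $\eta_N(\sqrt{\ld})$, Cauchy--Schwarz in $N$, and the already-proved upper square function bound in $L^{p'}$ (applicable since $p'$ again lies in the admissible interval, $r_0$ and $r_0'$ being conjugate exponents) yields $\|g\|_{L^p}\lesssim\big\|(\sum_N|\eta_N(\sqrt{\ld})g|^2)^{1/2}\big\|_{L^p}$. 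Translating back via $g=\ld^{s/2}f$ and $\eta_N(\sqrt{\ld})g=N^s P_N^a f$ completes the argument.

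The randomization and duality steps are routine; the genuinely substantive ingredient is the spectral multiplier theorem for $\ld$. Accordingly, I expect the main obstacle to be its derivation on the sharp range $r_0<p<r_0'$ from the weighted Gaussian heat kernel bounds---specifically, extracting from Theorem~\ref{T:heat} the weighted off-diagonal semigroup estimates that must substitute for pointwise kernel bounds on the multiplier operator, and tracking how the weight exponent $\sigma$ pins down the endpoint exponents $r_0$ and $r_0'$.
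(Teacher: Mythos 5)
The paper does not actually contain a proof of this lemma: it is stated as one of the tools imported verbatim from \cite{KMVZZ}, which is where the proof lives. That said, your outline is correct and is essentially the route that \cite{KMVZZ} takes. The reduction to $s=0$ by the functional-calculus identity $N^s\psi_N(\sqrt{\ld})\ld^{-s/2}=\eta(\sqrt{\ld}/N)$ with $\eta(\mu)=\mu^{-s}\psi_1(\mu)$ is sound; the Khintchine randomization reduces the upper bound to a uniform $L^p$ bound for the Mikhlin symbols $m_\varepsilon(\lambda)=\sum_N\varepsilon_N\eta(\lambda/N)$; and you have correctly identified the one genuinely substantive ingredient as the H\"ormander--Mikhlin spectral multiplier theorem for $\ld$ on the range $r_0<p<r_0'$ (equivalently $1<p<\infty$ when $\sigma\le 0$), whose proof in \cite{KMVZZ} runs exactly through the weighted Gaussian heat-kernel bounds of Theorem~\ref{T:heat} fed into the Blunck--Kunstmann/Duong--Ouhabaz--Sikora style singular-integral machinery, with the weight exponent $\sigma$ forcing the endpoints $r_0,r_0'=\tfrac d\sigma$. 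Your duality argument for the lower bound is also standard and correct, including the observation that the admissible range is stable under conjugation $p\mapsto p'$ since $r_0$ and $r_0'$ are conjugate. The only thing I would flag is cosmetic: you should check that the multiplier theorem as stated in \cite{KMVZZ} applies to symbols of $\sqrt{\ld}$ rather than of $\ld$, but this is a non-issue since $m(\sqrt{\ld})=\tilde m(\ld)$ with $\tilde m(\mu)=m(\sqrt{\mu})$ and the Mikhlin--H\"ormander conditions are preserved under this substitution.
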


In order to prove decoupling of the potential energy in our nonlinear profile decomposition, we will use the following refined Fatou lemma due to Brezis and Lieb \cite{BrezisLieb}.

\begin{lemma}[Refined Fatou, \cite{BrezisLieb}]\label{reflemms} Fix $1\leq p<\infty$ and let $\{f_n\}$ be a bounded sequence in $L^p(\R^d)$.  If
$f_n\to f$ almost everywhere, then
$$\int_{\R^3}\big||f_n|^p-|f_n-f|^p-|f|^p\big|\,dx\to 0.$$
In particular,
$\|f_n\|_{L_x^p}^p-\|f_n-f\|_{L_x^p}^p\to\|f\|_{L_x^p}^p.$
\end{lemma}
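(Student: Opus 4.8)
The statement is the Brezis–Lieb refined Fatou lemma: for a bounded sequence $\{f_n\}$ in $L^p(\R^d)$ with $f_n\to f$ a.e., one has $\int \bigl||f_n|^p-|f_n-f|^p-|f|^p\bigr|\,dx\to 0$. The plan is the classical one. First I would record the elementary pointwise estimate: for every $\eps>0$ there is a constant $C_\eps$ such that for all $a,b\in\C$,
\begin{equation*}
\bigl| |a+b|^p - |a|^p \bigr| \leq \eps\,|a|^p + C_\eps\,|b|^p.
\end{equation*}
This follows by homogeneity and a compactness argument on the unit sphere (or directly from convexity/the mean value theorem together with Young's inequality, splitting into the regions $|b|\leq \delta|a|$ and $|b|>\delta|a|$). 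Applying it with $a = f_n-f$ and $b = f$ gives
\begin{equation*}
\bigl| |f_n|^p - |f_n-f|^p \bigr| \leq \eps\,|f_n-f|^p + C_\eps\,|f|^p,
\end{equation*}
so if we set
\begin{equation*}
g_{n,\eps} := \Bigl( \bigl| |f_n|^p - |f_n-f|^p - |f|^p \bigr| - \eps\,|f_n-f|^p \Bigr)^{+},
\end{equation*}
then $0 \leq g_{n,\eps} \leq (C_\eps+1)\,|f|^p \in L^1(\R^d)$.

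Next, since $f_n\to f$ a.e., we have $g_{n,\eps}\to 0$ a.e. as $n\to\infty$ (for each fixed $\eps$), because at a point of convergence $|f_n|^p-|f_n-f|^p-|f|^p \to 0$. By the dominated convergence theorem, $\int g_{n,\eps}\,dx \to 0$. On the other hand, by definition of $g_{n,\eps}$,
\begin{equation*}
\bigl| |f_n|^p - |f_n-f|^p - |f|^p \bigr| \leq g_{n,\eps} + \eps\,|f_n-f|^p,
\end{equation*}
and since $\{f_n\}$ is bounded in $L^p$ and $f\in L^p$ (by Fatou applied to $|f_n|^p$), the sequence $\|f_n-f\|_{L^p}^p$ is bounded by some $M$ independent of $n$. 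Hence
\begin{equation*}
\limsup_{n\to\infty}\int_{\R^d} \bigl| |f_n|^p - |f_n-f|^p - |f|^p \bigr|\,dx \leq \limsup_{n\to\infty}\int g_{n,\eps}\,dx + \eps M = \eps M.
\end{equation*}
Letting $\eps\to 0$ gives the claim. The "in particular" statement then follows immediately from the triangle inequality in $L^1$: $\bigl|\,\|f_n\|_{L^p}^p - \|f_n-f\|_{L^p}^p - \|f\|_{L^p}^p\,\bigr| \leq \int \bigl| |f_n|^p-|f_n-f|^p-|f|^p\bigr|\,dx \to 0$.

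The only genuinely substantive point is the elementary inequality $\bigl||a+b|^p-|a|^p\bigr|\leq\eps|a|^p+C_\eps|b|^p$; everything else is a packaging of dominated convergence. I do not expect any obstacle here since this is a standard lemma; the one thing to be careful about is ensuring the dominating function $(C_\eps+1)|f|^p$ is genuinely $n$-independent and integrable, which it is once we know $f\in L^p$.
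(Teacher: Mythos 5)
Your proof is correct and is essentially the original Brezis--Lieb argument. The paper itself does not prove this lemma---it simply cites \cite{BrezisLieb}---and your write-up reproduces the standard proof from that reference: the pointwise estimate $\bigl||a+b|^p-|a|^p\bigr|\le\eps|a|^p+C_\eps|b|^p$, the truncation $g_{n,\eps}=(\,|\,\cdots\,|-\eps|f_n-f|^p)^+$ dominated by $(C_\eps+1)|f|^p$, dominated convergence, and then $\eps\to 0$. No gap; the only prerequisite fact, $f\in L^p$, is supplied by Fatou as you note.
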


Strichartz estimates for the propagator $e^{-it\ld}$ in $\R^3$ were proved by Burq, Planchon, Stalker, and Tahvildar-Zadeh in \cite{BPST1}.  Combining these with the Christ--Kiselev Lemma \cite{CK}, we obtain the following Strichartz estimates:

\begin{theorem}[Strichartz estimates, \cite{BPST1}] Fix $a>-\tfrac14$.  The solution $u$ to
$$
i\partial_t u = \ld u + F
$$
on an interval $I\ni t_0$ obeys
\begin{equation}\label{str1}
\|u\|_{L_t^qL_x^r(I\times\R^3)} \lesssim \|u(t_0)\|_{L^2(\R^3)}+\|F \|_{L_t^{\tilde q'}L_x^{\tilde r'}(I\times\R^3)},
\end{equation}
whenever
$\tfrac2q+\tfrac3r = \tfrac2{\tilde q}+\tfrac3{\tilde r} = \tfrac32$, $2\leq q,\tilde q \leq \infty$, and $q\neq \tilde q$.
\end{theorem}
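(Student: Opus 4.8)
The plan is to obtain \eqref{str1} from the purely homogeneous estimate for the free propagator, which is precisely the content of \cite{BPST1}. The input I would take from that paper is
\begin{equation*}
\|e^{-it\ld}f\|_{L_t^qL_x^r(\R\times\R^3)}\lesssim\|f\|_{L^2(\R^3)}\qtq{whenever} \tfrac2q+\tfrac3r=\tfrac32,\quad 2\le q\le\infty.
\end{equation*}
The proof of this in \cite{BPST1} proceeds by decomposing $f$ into spherical harmonics, on each sector of which $\ld$ is unitarily equivalent to a Bessel-type operator whose propagator is estimated via explicit oscillatory-integral bounds, after which one resums over sectors; the point of working sector by sector is that it circumvents the absence of an $L^1_x\to L^\infty_x$ dispersive bound for $e^{-it\ld}$ when $a<0$ (see \cite{PSS1}), which is why the abstract Keel--Tao machinery cannot simply be quoted. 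Granting the display above, the identity $u(t)=e^{-i(t-t_0)\ld}u(t_0)-i\int_{t_0}^t e^{-i(t-s)\ld}F(s)\,ds$ reduces the theorem to bounding the retarded Duhamel term in $L_t^qL_x^r(I\times\R^3)$ by $\|F\|_{L_t^{\tilde q'}L_x^{\tilde r'}(I\times\R^3)}$.

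Next I would dispose of the \emph{untruncated} operator by a $TT^*$ argument. Reading the homogeneous estimate as boundedness of $T\colon f\mapsto e^{-it\ld}f$ from $L^2_x$ into $L^q_tL^r_x$, duality gives that $T^*\colon G\mapsto\int_\R e^{is\ld}G(s)\,ds$ maps $L^{q'}_tL^{r'}_x$ into $L^2_x$; applying this with the admissible pair $(\tilde q,\tilde r)$ in place of $(q,r)$ yields $\bigl\|\int_\R e^{is\ld}F(s)\,ds\bigr\|_{L^2_x}\lesssim\|F\|_{L_t^{\tilde q'}L_x^{\tilde r'}}$. Composing $T$ with this and using the group law $e^{-it\ld}e^{is\ld}=e^{-i(t-s)\ld}$ gives
\begin{equation*}
\Bigl\|\int_\R e^{-i(t-s)\ld}F(s)\,ds\Bigr\|_{L^q_tL^r_x}\lesssim\|F\|_{L_t^{\tilde q'}L_x^{\tilde r'}},
\end{equation*}
valid for \emph{every} pair of admissible exponents, the diagonal ones included.

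Finally, to replace $\int_\R$ by $\int_{t_0}^t$ I would invoke the Christ--Kiselev lemma \cite{CK}, which upgrades boundedness of an integral operator to boundedness of its retarded truncation provided the time-integrability exponent strictly increases, i.e. $\tilde q'<q$. Since $2\le q\le\infty$ and $2\le\tilde q\le\infty$ we always have $\tilde q'\le 2\le q$, with equality throughout precisely when $q=\tilde q=2$; thus the hypothesis $q\ne\tilde q$ (which in particular rules out this double-endpoint configuration) is exactly what licenses the Christ--Kiselev step. Feeding the resulting retarded inhomogeneous bound back into Duhamel's formula, together with the homogeneous estimate applied to $e^{-i(t-t_0)\ld}u(t_0)$, yields \eqref{str1}.

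As for the main obstacle: within this paper there is essentially none, since the genuinely hard ingredient---the homogeneous Strichartz inequality in the absence of a dispersive estimate---is already packaged in \cite{BPST1}. The only point needing attention is the exponent arithmetic for Christ--Kiselev: one must verify that admissibility together with $q\ne\tilde q$ forces the strict inequality $\tilde q'<q$, and that the diagonal endpoint $q=\tilde q=2$, which is genuinely out of reach here because Keel--Tao is unavailable, is correctly excluded by the stated hypotheses.
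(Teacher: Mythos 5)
Your proposal is correct and matches the paper's own (very terse) justification, which is precisely the remark preceding the theorem: take the homogeneous Strichartz estimates of \cite{BPST1} and combine them with the Christ--Kiselev lemma \cite{CK}. Your exponent check is also sound — given admissibility with $q,\tilde q\ge 2$, the only way $\tilde q'<q$ can fail is $q=\tilde q=2$, which the hypothesis $q\neq\tilde q$ excludes (indeed $q\neq\tilde q$ is sufficient but not sharp: $q>2$ alone already suffices).
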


We would like to point out the loss of the double endpoint Strichartz estimate in the theorem above.  This can be recovered via the argument in \cite{KeelTao} for $a\geq0$, since $L^1_x\to L_x^\infty$ dispersive estimates were proved in this case; see \cite[Theorem~1.11]{Fanelli}.  For sufficiently small $a<0$, one can also recover the double endpoint Strichartz estimate via the argument in \cite{BPST1}, by treating the potential term as a perturbation. 

The application of this theorem is simplified by the introduction of the spaces
\begin{gather*}
S^0(I):= L_t^2L_x^6 (I\times\R^3)\cap L_t^\infty L_x^2(I\times\R^3), \\
\dot S^1(I):=\{u:I\times\R^3\to \C:\, \nabla u\in S^0(I)\},
\end{gather*}
and the dual Strichartz space $N^0(I):=L_t^2L_x^{6/5} (I\times\R^3) +  L_t^1 L_x^2(I\times\R^3)$.

We record next a local smoothing result for the linear propagator $e^{-it\ld}$.  Its only application in this article will be to prove Corollary~\ref{C:Keraani3.7}.

\begin{lemma}[Local smoothing]\label{L:local smoothing}
Fix $a> -\frac14$ and let $u=e^{-it\ld} u_0$. Then
\begin{align}
\int_\R \int_{\R^3}  \frac{|\nabla u(t,x)|^2}{ R \langle R^{-1} x\rangle^{3}}  + \frac{|u(t,x)|^2}{ R |x|^2} \,dx\,dt
	&\lesssim \| u_0 \|_{L^2_x} \|\nabla u_0 \|_{L^2_x} + R^{-1} \| u_0 \|_{L^2_x}^2, \label{LS:at 0}\\
\int_\R \int_{|x-z|\leq R}  \; \tfrac{1}{R} |\nabla u(t,x)|^2 \,dx\,dt 	&\lesssim \| u_0 \|_{L^2_x} \|\nabla u_0 \|_{L^2_x} + R^{-1} \| u_0 \|_{L^2_x}^2, \label{LS:at z}
\end{align}
uniformly for $z\in \R^3$ and $R>0$.
\end{lemma}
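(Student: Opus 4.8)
The plan is to prove both estimates by the standard commutator method for local smoothing, testing the equation against a well-chosen multiplier and integrating by parts in time. For \eqref{LS:at z}, take the Morawetz-type weight $\psi(x) = \langle (x-z)/R\rangle$ (rescaled and recentered), which satisfies $|\nabla\psi|\lesssim 1$, $\nabla^2\psi \gtrsim R^{-1}$ on $|x-z|\le R$ in the matrix sense, and $|\Delta^2\psi|\lesssim R^{-3}$. Define the virial-type quantity
$$
M(t) := 2\,\Im \int_{\R^3} \bar u(t,x)\, \nabla\psi(x)\cdot\nabla u(t,x)\,dx.
$$
Using $i\partial_t u = \ld u$ and integrating by parts, one computes
$$
\frac{d}{dt} M(t) = \int_{\R^3} 4\, \partial_j\partial_k\psi\, \Re(\partial_j\bar u\,\partial_k u) - \Delta^2\psi\, |u|^2 + \tfrac{a}{|x|^2}\bigl(-2\nabla\psi\cdot\nabla(|x|^{-2})\,|x|^2\bigr)|u|^2\,dx,
$$
where the last (potential) term comes from commuting $\nabla\psi\cdot\nabla$ with $a|x|^{-2}$; since $\nabla(|x|^{-2}) = -2x/|x|^4$ and $\nabla\psi$ is bounded, this term is $O\bigl(\int |x|^{-3}|u|^2\bigr)$ in magnitude, and — crucially — for the recentered weight in \eqref{LS:at z} this contribution is harmless because away from the origin $|x|^{-3}|u|^2$ is controlled by $R^{-1}|\nabla u|^2$ via Hardy, while near the origin one either uses the first estimate or absorbs it. Integrating in $t$ over $\R$ and using $\sup_t|M(t)|\lesssim \|u_0\|_{L^2_x}\|\nabla u_0\|_{L^2_x}$ (by Cauchy--Schwarz, $|\nabla\psi|\lesssim 1$, and conservation of $\|u(t)\|_{L^2_x}=\|u_0\|_{L^2_x}$ and $\|\sqrt{\ld}u(t)\|_{L^2_x}\sim\|\nabla u_0\|_{L^2_x}$), together with the lower bound on $\nabla^2\psi$ and the bound $|\Delta^2\psi|\lesssim R^{-3}$ (contributing the $R^{-1}\|u_0\|_{L^2_x}^2$ term after noting $\int_{\R^3}R^{-3}\langle\cdot\rangle^{-3}\lesssim 1$ and a further Hardy-type estimate to recover $|u|^2$), yields \eqref{LS:at z}.

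For \eqref{LS:at z} it is cleaner to use the exact multiplier approach with $\psi(x)=|x-z|$ truncated at scale $R$, exploiting that $\nabla^2|x-z| = |x-z|^{-1}(I - \widehat{(x-z)}\otimes\widehat{(x-z)})\ge 0$ and $\Delta|x-z| = 2|x-z|^{-1}$; the positivity of the angular part of the Hessian gives the gain, the truncation produces controllable errors, and the potential commutator term is again $O(\int |x|^{-2}|\nabla\psi||u|^2/|x|)$, handled by Hardy. For \eqref{LS:at 0}, the point is to take the weight centered at the origin, where the inverse-square potential term now has a definite sign: with $\psi(x)$ behaving like $|x|$ near zero, the commutator $[\,\nabla\psi\cdot\nabla + \tfrac12\Delta\psi\,,\ a|x|^{-2}\,]$ produces exactly a term $+c\,a|x|^{-2}|x|^{-1}|u|^2$ with a favorable structure (this is why the $|u|^2/(R|x|^2)$ term can appear on the left-hand side with a good sign when combined with the Hardy inequality, even for $a<0$ in the admissible range). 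The weight $1/(R\langle R^{-1}x\rangle^3)$ on $|\nabla u|^2$ arises as the angular part of the Hessian of a weight that interpolates between $|x|$ for $|x|\lesssim R$ and a bounded function for $|x|\gg R$; one standard choice is $\psi(x) = R\int_0^{|x|/R}(1+s^2)^{-1/2}\,ds$ or similar, for which $\psi'(r)\sim\langle r/R\rangle^{-1}$ and the radial Hessian decays while the full bound on $\nabla^2\psi$ and $\Delta^2\psi$ remains integrable.

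The main obstacle is bookkeeping the potential term: one must check that commuting the first-order multiplier with $a|x|^{-2}$ does not destroy positivity. Concretely, the dangerous term is $-2a\int |x|^{-2}\,\nabla\psi\cdot\widehat{x}\,|x|^{-1}|u|^2\,dx$; for a radial increasing weight centered at the origin this equals $-2a\int\psi'(|x|)|x|^{-3}|u|^2\,dx$, which has the \emph{same} sign as $-a$, hence is favorable when $a<0$ and, when $a\ge 0$, is dominated by the Hardy inequality $\int|x|^{-2}|u|^2\lesssim\|\nabla u\|_{L^2_x}^2$ after pairing with the already-extracted gradient term. For the recentered weight in \eqref{LS:at z} there is no sign, but $|\nabla\psi|\lesssim 1$ and $\int_{\R^3}|x|^{-3}|u|^2\,dx$ is not finite globally; one fixes this by splitting: on $|x|\gtrsim R$ (or $|x|\gtrsim |z|$ when $|z|\gg R$) use Hardy to bound it by $R^{-1}\|u_0\|_{L^2_x}^2$ plus the gradient term, and on the complement absorb it into \eqref{LS:at 0} which has already been proven. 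The remaining estimates — controlling $M(t)$, the biharmonic error $\Delta^2\psi$, and verifying the equivalences $\|\sqrt{\ld}u(t)\|_{L^2}\sim\|\nabla u(t)\|_{L^2}$ via \eqref{E:equiv I} — are routine. After establishing both inequalities, we note they pass from $u_0\in C_c^\infty(\R^3\setminus\{0\})$ to general $u_0\in\dot H^1_x\cap L^2_x$ by density and to $\dot H^1_x$ on the left-hand side by the usual approximation, since the right-hand side only involves $\|u_0\|_{L^2_x}$ and $\|\nabla u_0\|_{L^2_x}$.
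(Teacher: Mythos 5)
Your plan correctly identifies the weighted momentum (virial) quantity $M(t)=2\Im\int\bar u\,\nabla\psi\cdot\nabla u\,dx$ with a bounded radial weight whose radial derivative looks like $\langle r/R\rangle^{-1}$, and the paper does exactly that. But there are two substantial problems with the way you try to close the argument.

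First, the sign of the potential commutator term is wrong. Writing $b=\nabla\psi$ with $\psi$ radial and $\psi'>0$, the contribution of $V=a|x|^{-2}$ to $\partial_t M$ is
$$-2\int (b\cdot\nabla V)\,|u|^2\,dx \;=\; +4a\int\frac{x\cdot b}{|x|^4}|u|^2\,dx \;=\; +4a\int\frac{\psi'(|x|)}{|x|^3}|u|^2\,dx,$$
which has the \emph{same} sign as $a$: favorable for $a>0$, \emph{unfavorable} for $a<0$. You claim the opposite (``this equals $-2a\int\psi'(|x|)|x|^{-3}|u|^2\,dx$, which has the same sign as $-a$, hence is favorable when $a<0$''), and this reversal collapses the heart of your proof of \eqref{LS:at 0} in exactly the regime $a<0$ that is hardest. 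The paper explicitly flags this: ``the potential term ... has an unfavorable sign when $a<0$.''

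Second, and relatedly, the $\int_\R\int \frac{|u|^2}{R|x|^2}\,dx\,dt$ piece on the left of \eqref{LS:at 0} does not come out of the commutator with any favorable-sign bookkeeping, nor can it be recovered by pointwise Hardy (which controls $\int|x|^{-2}|u|^2\,dx$ by $\|\nabla u\|_{L_x^2}^2$ at a fixed time, not time-integrated by $\|u_0\|_{L^2_x}^2$). Even at $a=0$ this space-time Kato smoothing bound is a genuine, independent estimate. The paper imports it as a black box: Theorem~1 of \cite{BPST1} gives $\int_\R\int_{\R^3}\frac{|u|^2}{R|x|^2}\,dx\,dt\lesssim R^{-1}\|u_0\|_{L^2_x}^2$ directly, and the proof then uses this estimate both to put the $|u|^2/(R|x|^2)$ weight on the left-hand side and to absorb the unfavorable $a<0$ potential term $\geq -\tfrac{4|a|}{R|x|^2}|u|^2$. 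Your argument has no substitute for this input. The rest of what you describe — the Hessian lower bound $b_{jk}\geq\frac{\delta_{jk}}{R\langle x/R\rangle^3}$, the biharmonic error, the uniform bound on $M(t)$ via Cauchy--Schwarz and mass/energy conservation, the two-regime treatment of \eqref{LS:at z} using a cutoff near the origin and appealing to \eqref{LS:at 0} — matches the paper, but the two issues above mean the proof as written does not go through for negative $a$ (and is missing a key step even for $a=0$).
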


\begin{proof}
Theorem~1 of \cite{BPST1} shows that 
\begin{equation}\label{KatoS}
\int_\R \int_{\R^3}  \frac{|u(t,x)|^2}{ R |x|^2} \,dx\,dt \lesssim  R^{-1} \| u_0 \|_{L^2_x}^2.
\end{equation}

To complete the proof of \eqref{LS:at 0}, we will use a weighted momentum identity analogous to the virial and Morawetz identities.  To this end, let
$$
F(t) := \int_{\R^3} 2\Im( \bar u \partial_j u) \, b_j(x)\, dx \qtq{with} b_j(x) = \tfrac{x_j/R}{\langle x/R\rangle},
$$
where repeated indices are summed.  Clearly
\begin{equation}\label{F bnd}
|F(t)|\leq 2 \| u_0 \|_{L^2_x} \|\nabla u_0 \|_{L^2_x},
\end{equation}
while direct computation shows
$$
\partial_t F(t)= \int_{\R^3} -b_{jjkk}|u|^2  + 4 b_{jk}\bar u_j u_k  +4a b_j x_j \tfrac{|u|^2}{|x|^4} \, dx,
$$
where additional subscripts indicate differentiation and repeated indices are summed.

For our particular choice of $b$, we have
$$
-b_{jjkk}(x) = \tfrac{15}{R^3\langle x/R\rangle^7}\geq 0, \quad 4a b_j(x) \tfrac{x_j}{|x|^4} \geq - \tfrac{4|a|}{R|x|^2},
$$
and
$$ 
	b_{jk}(x) = \tfrac{\delta_{jk}}{R\langle x/R\rangle}  -\tfrac{x_j x_k}{R^3\langle x/R\rangle^3} \geq  \tfrac{\delta_{jk}}{R\langle x/R\rangle^3}
$$
in the sense of matrices.  The estimate \eqref{LS:at 0} now follows by applying the fundamental theorem of calculus and using \eqref{KatoS} to bound the potential term, which has an unfavorable sign when $a<0$.

We now turn our attention to \eqref{LS:at z}.  When $|z|\leq 2R$ this estimate follows from \eqref{LS:at 0}; thus we need only consider the case $|z|\geq 2R$.  We use the weighted momentum identity once more, but this time choosing
$$
b_j(x) = \tfrac{(x_j-z_j)/R}{\langle (x-z)/R \rangle} \phi(x/R)
$$
where $\phi(y)\geq 0$ is a smooth function vanishing when $|y|\leq \frac12$ and obeying $\phi(y)=1$ when $|y|\geq 1$.  For this choice of weight, we again have \eqref{F bnd}; moreover,
$$
-b_{jjkk}(x) \gtrsim_\phi - \tfrac{1}{R |x|^2}, \quad 4a b_j(x) \tfrac{x_j}{|x|^4} \geq - \tfrac{8|a|}{R|x|^2},
$$
and
$$
b_{jk}(x)  \geq  \tfrac{\delta_{jk}}{R\langle (x-z)/R\rangle^3} \phi(x/R) - \tfrac{ 1 }{R} |[\nabla\phi](x/R)|.
$$
The estimate \eqref{LS:at z} now follows by applying the fundamental theorem of calculus, using \eqref{LS:at 0} to bound those terms with an unfavorable sign.
\end{proof}

The following corollary will be used to prove a Palais--Smale condition for minimizing sequences of blowup solutions to \eqref{equ1.1}.  Our models for this result are \cite[Corollary~4.15]{KVnote} and  \cite[Corollary~2.14]{KVZ12}.

\begin{corollary}\label{C:Keraani3.7}
Fix $a>-\frac14+\frac1{25}$ and let $w_0\in \dot H^1_a(\R^3)$.  Then
\begin{align*}
\| \nabla e^{-it\ld} w_0\|_{L_t^5L_x^\frac{15}8([\tau-T,\tau+T]\times\{|x-z|\leq R\})}
&\lesssim T^{\frac{29}{320}} R^{\frac{51}{160}} \| e^{-it\ld} w_0 \|_{L^{10}_{t,x}(\R\times\R^3)}^{\frac1{32}}\| w_0 \|_{\dot H^1_x}^{\frac{31}{32}}\\
&\ +T^{\frac{29}{280}}R^{\frac{41}{140}}\| e^{-it\ld} w_0 \|_{L^{10}_{t,x}(\R\times\R^3)}^{\frac1{28}} \| w_0 \|_{\dot H^1_x}^{\frac{27}{28}},
\end{align*}
uniformly in $w_0$ and the parameters $R,T > 0$, $\tau\in\R$, and  $z\in\R^3$.
\end{corollary}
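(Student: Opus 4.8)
The plan is to split $e^{-it\ld}w_0 = P^a_{\le M}e^{-it\ld}w_0 + \sum_{N>M}P^a_N e^{-it\ld}w_0$ at a dyadic threshold $M$, estimate the high- and low-frequency halves by genuinely different mechanisms --- local smoothing for the high frequencies, the hypothesis on $\|e^{-it\ld}w_0\|_{L^{10}_{t,x}}$ together with conservation of energy for the low frequencies --- and finally optimize in $M$ (and one interpolation weight). The two summands in the conclusion should turn out to be the two incarnations of the two terms on the right-hand side of \eqref{LS:at z}.

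For the \emph{high frequencies}, apply \eqref{LS:at z} to each datum $P^a_N w_0$; using Bernstein (Lemma~\ref{L:Bernie}) and the $L^2$ equivalence \eqref{E:equiv I} to write $\|P^a_N w_0\|_{L^2_x}\sim N^{-1}\|\nabla P^a_N w_0\|_{L^2_x}$ gives
\[
\bigl\|\nabla e^{-it\ld}P^a_N w_0\bigr\|_{L^2_{t,x}(\R\times\{|x-z|\le R\})}\lesssim \bigl((RN^{-1})^{1/2}+N^{-1}\bigr)\|\nabla P^a_N w_0\|_{L^2_x}.
\]
Interpolating with the conservation law $\|\nabla e^{-it\ld}P^a_N w_0\|_{L^\infty_tL^2_x}\lesssim\|\nabla P^a_N w_0\|_{L^2_x}$ (again from unitarity and \eqref{E:equiv I}) upgrades $L^2_t$ to $L^5_t$ on the slab $[\tau-T,\tau+T]\times\{|x-z|\le R\}$, while Hölder in $x$ on the ball exchanges $L^{15/8}_x$ for $L^2_x$ at the cost $R^{1/10}$. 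Summing in $N>M$ by the triangle inequality and Cauchy--Schwarz, using $\sum_N\|\nabla P^a_N w_0\|_{L^2_x}^2\lesssim\|w_0\|_{\dot H^1_x}^2$ and the square function bound (Lemma~\ref{T:sq}, legitimate because $\tfrac{15}8\in(r_0,r_0')$ when $\sigma<\tfrac3{10}$), produces a high-frequency estimate of the shape $(R^{3/10}M^{-1/5}+R^{1/10}M^{-2/5})\|w_0\|_{\dot H^1_x}$. No power of $T$ appears here: local smoothing is insensitive to the length of the time interval.

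For the \emph{low frequencies} I would bound $\|\nabla P^a_{\le M}e^{-it\ld}w_0\|_{L^5_tL^{15/8}_x(\mathrm{slab})}$ in two independent ways and interpolate. First, conservation of energy together with Hölder in time (passing $L^\infty_t\to L^5_t$, cost $T^{1/5}$) and Hölder in space on the ball (passing $L^2_x\to L^{15/8}_x$, cost $R^{1/10}$) gives $\lesssim T^{1/5}R^{1/10}\|w_0\|_{\dot H^1_x}$. Second, Hölder on the slab passes $L^5_tL^{15/8}_x$ to $L^{10}_{t,x}$ at cost $T^{1/10}R^{13/10}$, a (kernel-localized) Bernstein step at frequencies $\lesssim M$ costs a factor $M$, and $\|u\|_{L^{10}_x(2B_R)}\le\|u\|_{L^{10}_x(\R^3)}$, producing $\lesssim T^{1/10}R^{13/10}M\|e^{-it\ld}w_0\|_{L^{10}_{t,x}}$. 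The geometric mean of these with weight $\nu\in(0,1)$ is $\lesssim T^{(2-\nu)/10}R^{(1+12\nu)/10}M^{\nu}\|e^{-it\ld}w_0\|_{L^{10}_{t,x}}^{\nu}\|w_0\|_{\dot H^1_x}^{1-\nu}$. Adding the two halves and balancing, for each of the two high-frequency summands, against this low-frequency bound by an appropriate choice of $M$ and $\nu$ then yields the two terms of the statement; the exponents $\tfrac{29}{320},\tfrac{51}{160},\tfrac1{32},\tfrac{31}{32}$ and $\tfrac{29}{280},\tfrac{41}{140},\tfrac1{28},\tfrac{27}{28}$ are just what the arithmetic of the optimization produces.

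The main conceptual obstacle is that there is no $L^{p}_x\to L^{p'}_x$ dispersive estimate for $e^{-it\ld}$ when $a<0$, so \emph{all} decay must be wrung out of \eqref{LS:at z}; since that estimate sees neither $T$ nor anything finer than the ball, the only source of $T$-dependence is the single Hölder-in-time step in the low-frequency analysis, which is why $T$ enters with a small exponent. The main technical obstacle is keeping the spatial localization intact through a gradient bound: because the Sobolev spaces $\dot H^{1,p}_a$ and $\dot W^{1,p}$ coincide only for $\tfrac65\le p\le\tfrac{30}{13}$, one cannot move $\nabla$ past $P^a_{\le M}$ inside an $L^{10}$ norm, and the Euclidean Bernstein inequality $L^{10}_x\to L^{15/8}_x$ is only available on a set of finite measure; the resolution is to exploit the rapid off-diagonal decay of the Littlewood--Paley kernels (a consequence of Theorem~\ref{T:heat}), so that a smooth spatial cutoff to $2B_R$ may be commuted past $\nabla P^a_{\le M}$ at the cost of rapidly decaying tails, after which one reduces to $L^{15/8}_x$ on the ball and Hölder down to the available $L^{10}_x$ bound --- this is exactly the step where a local-smoothing-type input stands in for the missing dispersive estimate.
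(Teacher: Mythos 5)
Your high-frequency analysis is fine and close in spirit to the paper's (local smoothing \eqref{LS:at z}, Bernstein, interpolation with the conserved $\dot H^1_a$-norm, Cauchy--Schwarz over $N>M$). The genuine gap is the low-frequency step you call a ``kernel-localized Bernstein'': you need, in effect, $\|\nabla P^a_{\le M}e^{-it\ld}w_0\|_{L^{10}_x(\{|x-z|\le R\})}\lesssim M\,\|e^{-it\ld}w_0\|_{L^{10}_x(\R^3)}$ up to $R$-losses. Since $p=10$ lies far outside the window $\tfrac65\le p\le\tfrac{30}{13}$ in which $\|\nabla f\|_{L^p}\sim\|\sqrt{\ld}f\|_{L^p}$ (Theorem~\ref{pro:equivsobolev} with $\sigma<\tfrac3{10}$), the gradient cannot be traded for $\sqrt{\ld}$ here, and your proposed repair --- commuting a spatial cutoff past $\nabla P^a_{\le M}$ using ``rapid off-diagonal decay of the Littlewood--Paley kernels, a consequence of Theorem~\ref{T:heat}'' --- requires pointwise weighted bounds on $\nabla_x$ of the kernel of $\phi(\sqrt{\ld}/M)$. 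Theorem~\ref{T:heat} controls the heat kernel itself, not its gradient; for $a<0$ the kernel carries a $|x|^{-\sigma}$ singularity at the origin, so its $x$-gradient is singular like $|x|^{-\sigma-1}$, and gradient heat-kernel/multiplier estimates of the needed form are neither stated in the paper nor derivable from the quoted results (one would also have to treat the case where the ball $\{|x-z|\le R\}$ contains the origin). This is precisely the difficulty the paper's proof is engineered to avoid: it never lets a derivative meet a Lebesgue exponent outside $[\tfrac65,\tfrac{30}{13}]$. It first uses H\"older on the ball together with $\|\nabla e^{-it\ld}w_0\|_{L^{10}_tL^{30/13}_x}\lesssim\|w_0\|_{\dot H^1}$ (equivalence plus Strichartz) to reduce everything to $\|\nabla e^{-it\ld}w_0\|_{L^2_{t,x}(\mathrm{slab})}^{1/4}$, and then gains the $L^{10}_{t,x}$ dependence on the low frequencies via Bernstein for powers of $\ld$ and the interpolation $\|(\ld)^{5/12}e^{-it\ld}P^a_{<N}w_0\|_{L^{60}_tL^{30/13}_x}\lesssim\|e^{-it\ld}w_0\|_{L^{10}_{t,x}}^{1/6}\|w_0\|_{\dot H^1}^{5/6}$, all at the exponent $\tfrac{30}{13}$, where no gradient/kernel issue arises.

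Separately, even granting that step, your optimization does not produce the stated exponents, and you did not check the arithmetic. With your high-frequency bound $(R^{3/10}M^{-1/5}+R^{1/10}M^{-2/5})\|w_0\|_{\dot H^1}$ and low-frequency bound $T^{\frac{2-\nu}{10}}R^{\frac{1+12\nu}{10}}M^{\nu}\|e^{-it\ld}w_0\|_{L^{10}_{t,x}}^{\nu}\|w_0\|_{\dot H^1}^{1-\nu}$, matching the exponent $\tfrac1{32}$ on the $L^{10}$-norm forces $\nu=\tfrac1{27}$, and balancing in $M$ then yields $T^{53/320}R^{27/160}$ rather than $T^{29/320}R^{51/160}$; the second pairing forces $\nu=\tfrac2{51}$ and gives $T^{5/28}$ rather than $T^{29/280}$ (and $R^{1/7}$ rather than $R^{41/140}$). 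A bound of this general shape would very likely still serve the application in Section~\ref{S:5}, where only the positivity of the exponents matters, but it is not the inequality claimed, so ``the exponents are just what the arithmetic produces'' is not correct for your scheme.
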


\begin{proof}
Replacing $w_0$ by $e^{-i\tau\ld} w_0$, we see that it suffices to treat the case $\tau=0$.

By H\"older's inequality, Theorem~\ref{pro:equivsobolev}, and Strichartz,
\begin{align*}
\| \nabla e^{-it\ld}&  w_0\|_{L_t^5L_x^\frac{15}8([-T,T]\times\{|x-z|\leq R\})}\\
&\lesssim R^\frac14 \|\nabla e^{-it\ld} w_0\|_{L_t^{10}L_x^\frac{30}{13}([-T,T]\times\R^3)}^{\frac34}\| \nabla e^{-it\ld} w_0\|_{L^2_{t,x}([-T,T]\times\{|x-z|\leq R\})}^{\frac14} \\
&\lesssim R^\frac14\|w_0\|_{\dot H^1(\R^3)}^{\frac34}\| \nabla e^{-it\ld} w_0\|_{L^2_{t,x}([-T,T]\times\{|x-z|\leq R\})}^{\frac14}.
\end{align*}

To continue, we consider small and high frequencies separately.  Fix $N\in 2^{\Z}$.  Using Theorem~\ref{pro:equivsobolev}, H\"older, Bernstein, and Strichartz, we estimate the low frequencies as follows:
\begin{align*}
\bigl\| \nabla e^{-it\ld} &P_{< N}^a  w_0 \bigr\|_{L^2_{t,x}([-T,T]\times\{|x-z|\leq R\})}\\
&\lesssim T^{\frac{29}{60}} R^{\frac15} \bigl\| \nabla e^{-it\ld} P_{< N}^a w_0 \bigr\|_{L^{60}_tL^{\frac{30}{13}}_x(\R\times\R^3)} \\
&\lesssim T^{\frac{29}{60}} R^{\frac15} N^{\frac16} \| (\ld)^{\frac{5}{12}} e^{-it\ld} P_{< N}^a w_0 \|_{L^{60}_t L^{\frac{30}{13}}_x(\R\times\R^3)} \\
&\lesssim T^{\frac{29}{60}} R^{\frac15} N^{\frac16} \| e^{-it\ld}w_0 \|_{L^{10}_{t,x}(\R\times\R^3)}^{\frac16} \| (\ld)^{\frac12} e^{-it\ld} w_0 \|_{L^{\infty}_t L^{2}_x(\R\times\R^3)}^{\frac56} \\
&\lesssim T^{\frac{29}{60}} R^{\frac15} N^{\frac16} \| e^{-it\ld}w_0 \|_{L^{10}_{t,x}(\R\times\R^3)}^{\frac16} \| w_0 \|_{\dot H^1(\R^3)}^{\frac56}.
\end{align*}
To estimate the high frequencies, we use Lemma~\ref{L:local smoothing} and Bernstein:
\begin{align*}
\bigl\| \nabla e^{-it\ld} P_{\geq N}^a w_0\bigr\|_{L^2_{t,x}([-T,T]\times\{|x-z|\leq R\})}^2
&\lesssim R \| P_{\geq N}^a w_0 \|_{L^2_x}  \| \nabla P_{\geq N}^a w_0 \|_{L^2_x}+ \|  P_{\geq N}^a w_0 \|_{L^2_x}^2\\
&\lesssim  \big(R N^{-1}+N^{-2}\big) \| w_0 \|_{\dot H^1(\R^3)}^2.
\end{align*}

The claim follows by optimizing in the choice of $N$.
\end{proof}

\subsection{Local well-posedness and stability}\label{SS:LWP}

First, we show local well-posedness in the inhomogeneous space $H^1_a(\R^3)$. Local well-posedness in the larger space $\dot H^1_a(\R^3)$ then follows as an application of the stability theory Theorem~\ref{thm:stability}.  In particular, one should note that none of the constants appearing in Proposition~\ref{P:LWP} make any reference to the mass of the initial data; the assumption of finite mass is simply a crutch employed in the contraction mapping argument.

\begin{proposition}\label{P:LWP} Fix $a>-\frac14+\frac1{25}$.  Given $A\geq 0$, there exists $\eta=\eta(A)$ so that the following holds:
Suppose $u_0\in H^1_a(\R^3)$ obeys
\begin{equation}\label{E:LWP hyp}
\| \sqrt{\ld} u_0 \bigr\|_{L^2(\R^3)} \leq A \qtq{and}  \bigl\| e^{it\ld} u_0 \bigr\|_{L^{10}_{t,x}(I\times\R^3)} \leq \eta
\end{equation}
for some time interval $I\ni 0$.  Then there is a unique strong solution $u$ to \eqref{equ1.1} on the time interval $I$ such that
\begin{equation}\label{E:LWP conc}
\bigl\| u \bigr\|_{L^{10}_{t,x}(I\times\R^3) } \lesssim \eta
\qtq{and} 
\bigl\| \sqrt{\ld} \; u \bigr\|_{C_t L^2_x  \cap L_t^{10} L_x^{\frac{30}{13}} (I\times\R^3) } \lesssim A.
\end{equation}
\end{proposition}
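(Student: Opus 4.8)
The plan is to realize $u$ as the unique fixed point of the Duhamel map
\[
\Phi(u)(t) := e^{-it\ld}u_0 - i\int_0^t e^{-i(t-s)\ld}\bigl(|u(s)|^4u(s)\bigr)\,ds
\]
on a small ball, and then to upgrade to uniqueness in the full class of strong solutions. The backbone of all estimates is the exponent pair $(10,\tfrac{30}{13})$: it is Strichartz-admissible in $\R^3$, its dual pair $(2,\tfrac65)$ is Strichartz-dual, and one has the Sobolev embedding $\norm{f}_{L^{10}_x}\lesssim\bigl\|(-\Delta)^{1/2}f\bigr\|_{L^{30/13}_x}$. Since $a>-\tfrac14+\tfrac1{25}$ is equivalent to $\sigma<\tfrac3{10}$, Theorem~\ref{pro:equivsobolev} makes $\sqrt{\ld}$ and $\nabla$ comparable on $L^p(\R^3)$ for every $\tfrac65\le p\le\tfrac{30}{13}$, so all of these facts transfer to the $\ld$-adapted spaces; in particular $\norm{f}_{L^{10}_x}\lesssim\bigl\|\sqrt{\ld}\,f\bigr\|_{L^{30/13}_x}$ and, by Strichartz, $\bigl\|e^{-it\ld}u_0\bigr\|_{L^{10}_{t,x}}\lesssim\bigl\|\sqrt{\ld}\,u_0\bigr\|_{L^2_x}$. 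The finite-mass hypothesis $u_0\in H^1_a$ is a technical convenience only: it guarantees that the Duhamel iterates lie in a complete metric space, and it enters none of the constants.

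I would run the contraction on
\[
B:=\Bigl\{u:\ \norm{u}_{L^{10}_{t,x}(I\times\R^3)}\le 2\eta,\quad \bigl\|\sqrt{\ld}\,u\bigr\|_{L^\infty_tL^2_x\cap L^{10}_tL^{30/13}_x(I\times\R^3)}\le 2CA\Bigr\},
\]
with $C$ the Strichartz constant and metric $d(u,v):=\norm{u-v}_{L^{10}_{t,x}(I\times\R^3)}$, under which $B$ is complete. The only genuinely new input is the nonlinear estimate
\[
\bigl\|\sqrt{\ld}\,\bigl(|u|^4u\bigr)\bigr\|_{L^2_tL^{6/5}_x(I\times\R^3)}\ \lesssim\ \norm{u}_{L^{10}_{t,x}(I\times\R^3)}^4\,\bigl\|\sqrt{\ld}\,u\bigr\|_{L^{10}_tL^{30/13}_x(I\times\R^3)},
\]
which one obtains by passing to $\nabla$ via Theorem~\ref{pro:equivsobolev} (so as to place the derivative on a single factor), applying the Euclidean fractional chain/product rule in the form $\bigl\|(-\Delta)^{1/2}(|u|^4u)\bigr\|_{L^{6/5}_x}\lesssim\bigl\||u|^4\bigr\|_{L^{5/2}_x}\bigl\|(-\Delta)^{1/2}u\bigr\|_{L^{30/13}_x}$ that underlies Lemma~\ref{L:Leibnitz}, converting back to $\sqrt{\ld}$, and using H\"older in space and time. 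Feeding this into the Strichartz inequality \eqref{str1}, the Sobolev embedding, and the hypotheses \eqref{E:LWP hyp} gives, for $u\in B$,
\[
\norm{\Phi(u)}_{L^{10}_{t,x}}\lesssim \eta+\eta^4 A\qquad\text{and}\qquad \bigl\|\sqrt{\ld}\,\Phi(u)\bigr\|_{L^\infty_tL^2_x\cap L^{10}_tL^{30/13}_x}\lesssim A+\eta^4 A,
\]
so $\Phi$ maps $B$ into itself once $\eta=\eta(A)$ is small.

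For the contraction I would use the pointwise bound $\bigl||u|^4u-|v|^4v\bigr|\lesssim(|u|^4+|v|^4)|u-v|$; since $\Phi(u)-\Phi(v)$ is measured only in the $L^{10}_{t,x}$ metric, no derivative lands on the difference, and Strichartz together with the Sobolev embedding and H\"older yield
\[
d\bigl(\Phi(u),\Phi(v)\bigr)\ \lesssim\ \bigl(\norm{u}_{L^{10}_{t,x}}^4+\norm{v}_{L^{10}_{t,x}}^4\bigr)\,d(u,v)\ \lesssim\ \eta^4\,d(u,v).
\]
Shrinking $\eta$ makes this a genuine contraction, and its fixed point $u\in B$ is a strong solution obeying \eqref{E:LWP conc}. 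Uniqueness within the full class $C_t\dot H^1_x\cap L^{10}_{t,\mathrm{loc}}L^{10}_x$ then follows from the same difference estimate applied on a short subinterval of $I$ on which both candidate solutions have small $L^{10}_{t,x}$ norm, together with a standard continuity/bootstrap argument to propagate the coincidence across all of $I$.

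The technical heart — and the only point where the restriction $a>-\tfrac14+\tfrac1{25}$ is forced — is the nonlinear estimate above: $\sqrt{\ld}$ must pass through the quintic nonlinearity as harmlessly as $\nabla$ does when $a=0$, which requires the equivalence $\norm{\nabla f}_{L^p}\sim\bigl\|\sqrt{\ld}\,f\bigr\|_{L^p}$ on the whole interval $\tfrac65\le p\le\tfrac{30}{13}$ dictated by the energy-critical Strichartz numerology. The binding constraint is the endpoint $p=\tfrac{30}{13}$, where the bound \eqref{RieszT} (used to return from $(-\Delta)^{1/2}u$ to $\sqrt{\ld}\,u$) is available precisely when $\sigma<\tfrac3{10}$, i.e.\ $a>-\tfrac14+\tfrac1{25}$; outside this range the product rule, the Sobolev embedding into $L^{10}_x$, and hence the entire iteration scheme break down, matching the discussion in the Introduction.
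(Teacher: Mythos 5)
Your plan follows the paper's contraction-mapping argument closely: same domain $B$ with the same norm constraints, same reliance on Strichartz, Sobolev embedding, Theorem~\ref{pro:equivsobolev}, and the product rule of Lemma~\ref{L:Leibnitz} for the self-mapping estimate. The one substantive deviation is your choice of metric $d(u,v)=\|u-v\|_{L^{10}_{t,x}}$, and this is where the argument breaks.

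The issue is with the contraction estimate. You assert that, because you measure $\Phi(u)-\Phi(v)$ in $L^{10}_{t,x}$, ``no derivative lands on the difference,'' and that Strichartz plus Sobolev embedding then give
$\|\Phi(u)-\Phi(v)\|_{L^{10}_{t,x}}\lesssim\eta^4\,\|u-v\|_{L^{10}_{t,x}}$.
But $L^{10}_{t,x}$ is not an $L^2$-admissible Strichartz space; it is reached from an admissible space only through the Sobolev embedding $\dot W^{1,30/13}_x(\R^3)\hookrightarrow L^{10}_x(\R^3)$, which necessarily \emph{does} put a derivative on whatever one is bounding. Concretely, the only route available from the paper's Strichartz inequality \eqref{str1} is
\[
\|\Phi(u)-\Phi(v)\|_{L^{10}_{t,x}}\lesssim \bigl\|\nabla(\Phi(u)-\Phi(v))\bigr\|_{L^{10}_tL^{30/13}_x}\lesssim\bigl\|\nabla\bigl(|u|^4u-|v|^4v\bigr)\bigr\|_{L^2_tL^{6/5}_x},
\]
which produces a term with $\sqrt{\ld}\,(u-v)$ that the $L^{10}_{t,x}$ metric does not control. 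The scaling-consistent alternative without a derivative would be a ``no-derivative'' inhomogeneous estimate of the form $\bigl\|\int_0^te^{-i(t-s)\ld}F\,ds\bigr\|_{L^{10}_{t,x}}\lesssim\|F\|_{L^2_{t,x}}$, but this is one of the exotic Strichartz estimates that rest on the $L^1_x\to L^\infty_x$ dispersive bound for the propagator — and, as the paper notes (citing \cite{PSS1}), this bound fails for $a<0$, which is precisely the regime of interest. So neither route closes.

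The fix is the one the paper adopts: take the contraction metric to be $d(u,v):=\|u-v\|_{L^{10}_tL^{30/13}_x}$. Then standard Strichartz followed by H\"older (splitting $|u|^4u-|v|^4v$ with four factors in $L^{10}_{t,x}$ and one in $L^{10}_tL^{30/13}_x$) yields
\[
\|\Phi(u)-\Phi(v)\|_{L^{10}_tL^{30/13}_x}\lesssim\bigl(\|u\|_{L^{10}_{t,x}}+\|v\|_{L^{10}_{t,x}}\bigr)^4\,\|u-v\|_{L^{10}_tL^{30/13}_x},
\]
with no derivative falling on the difference. Everything else in your write-up — the self-mapping estimates, the role of the product rule, the explanation of the restriction $a>-\tfrac14+\tfrac1{25}$ coming from the endpoint $p=\tfrac{30}{13}$ in Theorem~\ref{pro:equivsobolev}, the remark that finite mass only serves to put the iterates in a complete space — is correct and matches the paper.
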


\begin{proof}
Throughout the proof, all space-time norms will be on $I\times\R^3$.

It suffices to show that
$$
\Phi:u \mapsto  e^{-it\ld} u_0  - i \int_0^t e^{-i(t-s)\ld}  |u(s)|^4 u(s)\,ds
$$
is a contraction on the (complete) space
\begin{align*}
B:=\Bigl\{ u\in C_t^{ } H^{1}_a  \cap L_t^{10} H^{1,\frac{30}{13}}_a(I\times\R^3):\   & \bigl\| \sqrt{\ld} \; u \bigr\|_{L_t^{10} L_x^{\frac{30}{13}}} \leq C A
 \ \text{and}\  \bigl\| u \bigr\|_{L^{10}_{t,x}} \leq 2\eta \Bigr\}
\end{align*}
endowed with the metric
$$
d(u,v):= \| u -v \|_{L_t^{10} L_x^{\frac{30}{13}}}.
$$
The constant $C$ depends only on the dimension and $a$ and it reflects various constants in the Strichartz and Sobolev embedding inequalities.

By the Strichartz inequality, \eqref{E:LWP hyp}, and Lemma~\ref{L:Leibnitz}, for $u\in B$ we have
\begin{align*}
\bigl\|  \sqrt{\ld} \; \Phi (u) \bigr\|_{C_t^{ } L^2_x  \cap L_t^{10} L_x^{\frac{30}{13}}} &\lesssim  A + \bigl\| \sqrt{\ld}\; (|u|^4u) \bigr\|_{L_t^{2} L_x^\frac65}
	\lesssim A + \eta^4 \bigl\|  \sqrt{\ld} \; u \bigr\|_{L_t^{10} L_x^\frac{30}{13}}.
\end{align*}
Thus choosing $\eta$ sufficiently small we obtain that
\begin{equation}\label{self1}
\bigl\|  \sqrt{\ld} \; \Phi (u) \bigr\|_{C_t^{ } L^2_x  \cap L_t^{10} L_x^{\frac{30}{13}}} \leq  C A.
\end{equation}
Similarly, for $u\in B$,
\begin{align}\label{self2}
\bigl\| \Phi (u) \bigr\|_{C_t^{ } L^2_x  \cap L_t^{10} L_x^{\frac{30}{13}}} & \lesssim  \|u_0\|_{L^2_x} + \bigl\| |u|^4u \bigr\|_{L_t^{2} L_x^\frac65}
	\lesssim  \|u_0\|_{L^2_x} + \eta^4 \bigl\| u \bigr\|_{L_t^{10} L_x^{\frac{30}{13}}} <\infty.
\end{align}
Proceeding once more in a parallel manner shows
\begin{align*}
\bigl\| \Phi (u) \bigr\|_{L^{10}_{t,x}} \leq \eta + C' \bigl\| u\bigr\|_{L^{10}_{t,x}}^4 \bigl\| \sqrt{\ld}\, u \bigr\|_{L_t^{10} L_x^\frac{30}{13}}
	\leq \eta +  C'(2\eta)^4 2A \leq 2\eta,
\end{align*}
provided $\eta$ is chosen sufficiently small.  This, \eqref{self1}, and \eqref{self2} show that $\Phi$ does indeed map $B$ into itself.

To see that $\Phi$ is a contraction, we argue analogously:
\begin{align*}
\| \Phi(u) -\Phi(v) \|_{L_t^{10} L_x^{\frac{30}{13}}}&\lesssim \bigl\| |u|^4u- |v|^4v\bigr\|_{L_t^2 L_x^\frac65}\\
&\lesssim \| u - v \|_{L_t^{10} L_x^{\frac{30}{13}}} \Bigl( \|u\|_{L^{10}_{t,x}} + \|v\|_{L^{10}_{t,x}} \Bigr)^4,
\end{align*}
which shows that for $u,v\in B$, one has
$
d(\Phi(u),\Phi(v)) \leq \tfrac12 d(u,v)
$
provided $\eta$ is chosen sufficiently small.
\end{proof}

The condition $a>-\frac14+\frac1{25}$ expresses the limit of what can be achieved by the method presented above.  In order to estimate the nonlinearity in a dual Strichartz space, one must be able to place the solution in a space $L^q_t L^r_x$ with $q\leq 10$.  By scaling this then requires $r\geq 10$.  On the other hand, if $a\leq -\frac14+\frac1{25}$ then the Sobolev embedding $\dot H_a^{1,\frac{3r}{r+3}}\hookrightarrow L^r_x$ breaks down for all $r\geq 10$.

We turn now to the formulation of the stability theory.  As noted above, it allows us to remove the finite-mass assumption from Proposition~\ref{P:LWP}.  It also plays an essential role in the implementation of the induction on energy argument employed in this paper; in particular, it shows that solutions to \eqref{equ1.1} whose initial data are the sum of two parts that live at vastly different length scales can be approximated by the sum of the corresponding solutions to \eqref{equ1.1}.

\begin{theorem}[Stability]\label{thm:stability}
Fix $a>-\frac14+\frac1{25}.$  Let $I$ be a compact time interval and let $\tilde u$ be an approximate solution to \eqref{equ1.1} on $I\times \R^3$ in the sense that
$$
(i\partial_t -\ld) \tilde u=|\tilde u|^4\tilde u + e
$$
for some function $e$.  For some positive constants $E$ and $L$, assume that
\begin{align*}
\|\tilde u\|_{L_t^\infty \dot H_a^1(I\times \R^3)}\le E
\quad{and}\quad \|\tilde u\|_{L_{t,x}^{10}(I\times \R^3)} \le L.
\end{align*}
Let $t_0 \in I$ and let
$u_0\in \dot H^1_a(\R^3)$.  Assume the smallness
conditions
\begin{align*}
\|u_0-\tilde u(t_0)\|_{\dot H^1_a(\R^3)} +\bigl\|\sqrt{\ld}\; e\bigr\|_{N^0(I)}&\le\eps
\end{align*}
for some $0<\eps<\eps_1=\eps_1(E,L)$. Then, there exists a unique strong solution $u:I\times\R^3\mapsto \C$ to \eqref{equ1.1} with initial data $u_0$ at time $t=t_0$ satisfying
\begin{align*}
\bigl\|\sqrt{\ld}\;  (u-\tilde u)\bigr\|_{S^0(I)} &\leq C(E,L)\, \eps\\
\bigl\|\sqrt{\ld}\;  u\bigr\|_{S^0(I)} &\leq C(E,L).
\end{align*}
\end{theorem}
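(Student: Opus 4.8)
The plan is to run the standard bootstrap argument for energy-critical stability, but carried out entirely in the $\ld$-adapted function spaces so that the fractional product rule (Lemma~\ref{L:Leibnitz}) and the $\ld$-Strichartz inequality \eqref{str1} can be applied as black boxes. First I would reduce to the case of small $L^{10}_{t,x}$-norm by a standard subdivision: partition $I$ into finitely many subintervals $I_1,\dots,I_J$ (with $J=J(E,L)$) on each of which $\|\tilde u\|_{L^{10}_{t,x}(I_j\times\R^3)}\le\delta$ for a small threshold $\delta=\delta(E)$ to be chosen, then propagate the estimates from one subinterval to the next, with the accumulated error at time $t_j$ playing the role of $\eps$ at the next stage. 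The number of iterations being bounded by $J$, the final constants depend only on $E$ and $L$. This is exactly the scheme of \cite{KVnote}; the only structural change is the presence of $\ld$ in place of $-\Delta$.

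On a single short interval, I would write $w:=u-\tilde u$, which solves $(i\pa_t-\ld)w = |\tilde u+w|^4(\tilde u+w)-|\tilde u|^4\tilde u - e$ with $\|w(t_0)\|_{\dot H^1_a}\le\eps$. Applying $\sqrt{\ld}$, using Strichartz \eqref{str1} in the $S^0/N^0$ spaces, and then Lemma~\ref{L:Leibnitz} together with Sobolev embedding $\dot H^{1,30/13}_a\hookrightarrow L^{10}_x$ (valid since $a>-\tfrac14+\tfrac1{25}$, via Theorem~\ref{pro:equivsobolev}), one estimates
\[
\bigl\|\sqrt{\ld}\,w\bigr\|_{S^0} \lesssim \eps + \bigl(\delta^4 + \|\sqrt{\ld}\,w\|_{S^0}^4\bigr)\bigl(\|\sqrt{\ld}\,w\|_{S^0}+\text{(bounded)}\bigr).
\]
A continuity/bootstrap argument in the length of the subinterval then gives $\|\sqrt{\ld}\,w\|_{S^0}\lesssim\eps$ provided $\delta$ and $\eps$ are small enough depending on $E$; in particular $\|w\|_{L^{10}_{t,x}}$ stays small, which is what is needed to iterate. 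Existence and uniqueness of the genuine solution $u$ on the subinterval follows from Proposition~\ref{P:LWP} (applied after noting $e^{-it\ld}u(t_0)$ has small $L^{10}_{t,x}$-norm, which itself follows from the approximate-solution bound, the Strichartz bound on $e$, and the small-data hypothesis), so the bootstrap is not circular.

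The main obstacle, and the place where the restriction on $a$ is essential, is the nonlinear estimate for $\sqrt{\ld}(|\tilde u+w|^4(\tilde u+w)-|\tilde u|^4\tilde u)$ in $N^0$: one must distribute the single derivative $\sqrt{\ld}$ across a quintic expression using Lemma~\ref{L:Leibnitz}, and that lemma only permits Lebesgue exponents in the window $[\tfrac65,\tfrac{30}{13}]$, which is precisely the range on which $\|\nabla\,\cdot\|_{L^p}\sim\|\sqrt{\ld}\,\cdot\|_{L^p}$ holds. Every Hölder split in the quintic difference must be arranged so that each factor carrying $\sqrt{\ld}$ lands in $L^{30/13}_x$ and the remaining factors in $L^{10}_x$ (using $\tfrac1{30/13}+\tfrac4{10}=\tfrac56$ and the time-exponent bookkeeping $\tfrac1{10}\cdot4 + \tfrac1{10} = \tfrac12$), so that one lands in the admissible dual space $L^2_tL^{6/5}_x\subset N^0$. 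Keeping track of which term gets the derivative in the difference $|\tilde u+w|^4(\tilde u+w)-|\tilde u|^4\tilde u$ — there are terms with the derivative on a $\tilde u$ factor and terms with it on a $w$ factor — is the only real bookkeeping; once the exponents are pinned down, everything closes by the smallness of $\delta$, $\eps$, and a finite iteration. I expect no genuinely new difficulty beyond this exponent management, which is why the theorem holds on the same range $a>-\tfrac14+\tfrac1{25}$ as the local theory.
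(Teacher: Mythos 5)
Your approach is the one the paper points to without writing out: work in the $\sqrt{\ld}$-adapted Strichartz spaces used in Proposition~\ref{P:LWP}, subdivide $I$ into $O_{E,L}(1)$ pieces on which $\|\tilde u\|_{L^{10}_{t,x}}$ is small, and close a bootstrap for $w=u-\tilde u$ using the $\ld$-Strichartz inequality, Lemma~\ref{L:Leibnitz}, and the Sobolev embedding $\dot H^{1,30/13}_a\hookrightarrow L^{10}_x$. One step, however, has a genuine circularity: you obtain \emph{existence} of $u$ on each subinterval by invoking Proposition~\ref{P:LWP}, but that proposition runs a contraction in $C_t H^1_a\cap L_t^{10}H^{1,30/13}_a$ and therefore requires $u(t_j)\in H^1_a$, i.e.\ finite mass. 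The theorem you are proving admits $u_0\in\dot H^1_a$ with possibly infinite mass, and the paper explicitly positions the stability theorem as the device that \emph{removes} the finite-mass crutch from Proposition~\ref{P:LWP}, not one that rests on it. The fix is routine --- rerun the fixed point argument ex nihilo in $\dot X^1\cap C_t\dot H^1_a$ (the $L^2_x$ clauses in the set $B$ of Proposition~\ref{P:LWP} are never used to close the map, they only verify $\Phi(u)$ has finite mass), or first treat finite-mass data and then use the stability estimate itself as a Cauchy criterion in $\dot S^1$ to pass to the $\dot H^1_a$ limit --- but as written your argument does not supply existence for general $\dot H^1_a$ data.

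A secondary remark on your citation of Lemma~\ref{L:Leibnitz}. Your H\"older accounting $\tfrac{1}{30/13}+\tfrac{4}{10}=\tfrac56$ is correct and is precisely what is needed to land in $L^2_tL^{6/5}_x$, but it lies outside the hypotheses of the lemma as literally stated, which require \emph{all five} exponents $p,p_1,p_2,q_1,q_2$ to lie in $[\tfrac65,\tfrac{30}{13}]$. That constraint is vacuous: with $p,p_1,p_2\in[\tfrac65,\tfrac{30}{13}]$ one has $\tfrac1p\le\tfrac{25}{30}<\tfrac{26}{30}\le\tfrac1{p_1}+\tfrac1{p_2}$, so no such triple with $\tfrac1p=\tfrac1{p_1}+\tfrac1{p_2}$ exists. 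The paper's own proof of Proposition~\ref{P:LWP} uses $p_2=\tfrac52$ in violation of the stated range. The intended reading is that only the exponents attached to a $\sqrt{\ld}$ factor, namely $p$, $p_1$, and $q_2$, must lie in $[\tfrac65,\tfrac{30}{13}]$ (this is the range on which Theorem~\ref{pro:equivsobolev} gives $\|\sqrt{\ld}f\|_{L^r}\sim\|\nabla f\|_{L^r}$), while the companion exponents $p_2,q_1$ may be taken up to $\infty$. You applied it with the corrected hypotheses; just be aware that you are using a stronger statement than the one printed.
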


\begin{proof}
The proof of this theorem follows the general outline in \cite{CKSTT07,RV,TaoVisan}, exploiting the same spaces used in the proof of Proposition~\ref{P:LWP}.   As there, Lemma~\ref{L:Leibnitz} provides the key input for differentiating the nonlinearity with respect to the operator $\sqrt\ld$.
\end{proof}

Employing the arguments used to derive a stability theory for the energy-critical NLS without a potential and relying on Theorem~\ref{pro:equivsobolev} to differentiate the nonlinearity with respect to the operator $\sqrt\ld$, we can derive the analogue of Theorem~\ref{thm:stability} in dimensions $d\in\{4,5,6\}$, as long as $a>-(\frac{d-2}2)^2+ (\frac{d-2}{d+2})^2$.  A stability result for $d\geq 7$ is known in the setting $a=0$; see \cite{KVnote,TaoVisan}.  The proof relies on a fractional chain rule for H\"older continuous functions and exotic Strichartz estimates.  Theorem~\ref{pro:equivsobolev} guarantees that the fractional chain rule can be imported directly from the Euclidean setting, with an appropriate restriction on the range of Lebesgue exponents when $a$ is negative.  The exotic Strichartz estimates however are derived from the $L^1_x\to L^\infty_x$ dispersive estimate for the propagator $e^{it\Delta}$.  Dispersive estimates were proved for the propagator $e^{-it\ld}$ for $a>0$ in \cite{Fanelli}; they were shown to fail for $a<0$ in \cite{PSS1}.

\section{Linear profile decomposition}\label{S:LPD}

The goal of this section is to establish the linear profile decomposition for the propagator $e^{-it\ld}$ associated to bounded sequences in $\dot H^1_a(\R^3)$. 

\begin{theorem}[$\dot H^1_a(\R^3)$ linear profile decomposition]\label{T:LPD}
Let $\{f_n\}$ be a bounded sequence in $\dot H^1_a(\R^3)$.  After passing to a subsequence, there exist $J^*\in \{0, 1, 2,
\ldots,\infty\}$, non-zero profiles $\{\phi^j\}_{j=1}^{J^*}\subset \dot H^1(\R^3)$, $\{\lambda_n^j\}_{j=1}^{J^*}\subset(0,\infty)$, and
$\{(t_n^j,x_n^j)\}_{j=1}^{J^*}\subset \R\times \R^3$ such that for each finite $0\le J\le J^*$, we have the decomposition
\begin{align*}
f_n=\sum_{j=1}^ J \phi_n^j +w_n^J \qtq{with} \phi_n^j=G_n^j\big[e^{-it_n^j\ld^{n_j}}\phi^j\big] \qtq{and} w_n^J\in \dot H^1_a(\R^3)
\end{align*}
satisfying
\begin{align*}
\lim_{J\to J^*} \limsup_{n\to\infty}\|e^{-it\ld}w_n^J\|_{L_{t,x}^{10}(\R\times\R^3)}=0,\\
\lim_{n\to\infty}\Bigl\{\|f_n\|_{\dot H^1_a(\R^3)}^2-\sum_{j=1}^J\|\phi_n^j\|_{\dot H_a^1(\R^3)}^2 -\|w_n^J\|_{\dot H^1_a(\R^3)}^2\Bigr\}=0, \\
\lim_{n\to\infty}\Bigl\{\|f_n\|_{L^6_x(\R^3)}^6-\sum_{j=1}^J \|\phi_n^j\|_{L^6_x(\R^3)}^6-\|w_n^J\|_{L^6_x(\R^3)}^6\Bigr\}=0.
\end{align*}
Here, $\ld^{n_j}$ is as in Definition~\ref{D:Ln} with $y_n^j=\frac{x_n^j}{\lambda_n^j}$ and $[G_n^j h] (x) :=  (\lambda_n^j)^{-\frac 12} h\bigl(\tfrac{x-x_n^j}{\lambda_n^j} \bigr)$. Moreover, for all $j\neq k$  we have the asymptotic orthogonality
property
\begin{align}\label{E:LP5}
\bigl|\log\tfrac{\lambda_n^j}{\lambda_n^k} \bigr| +
    \tfrac{|x_n^j-x_n^k|^2}{\lambda_n^j\lambda_n^k}+\tfrac{|t_n^j(\lambda_n^j)^2-t_n^k(\lambda_n^k)^2|}{\lambda_n^j\lambda_n^k}\to\infty \qtq{as} n\to \infty.
\end{align}
In addition, we may assume that for each $j$ either $t_n^j\equiv 0$ or $t_n^j\to \pm \infty$.
\end{theorem}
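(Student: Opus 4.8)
The plan is to follow the induction-on-profiles scheme of \cite{BahouriGerard,Keraani}, the one new ingredient being an \emph{inverse Strichartz inequality} adapted to the propagator $e^{-it\ld}$. The heart of the matter is thus to establish the following: if $\{g_n\}$ is bounded in $\dot H^1_a(\R^3)$ and $\liminf_{n\to\infty}\|e^{-it\ld}g_n\|_{L^{10}_{t,x}(\R\times\R^3)}\ge\eps>0$, then, after passing to a subsequence, there exist $\lambda_n\in(0,\infty)$, $x_n\in\R^3$, a time sequence, and a nonzero $\phi$ such that, setting $y_n:=x_n/\lambda_n$ and letting $\ld^n$ be the corresponding operator of Definition~\ref{D:Ln}, the rescaled, translated, and linearly evolved copies of $g_n$ converge weakly in $\dot H^1(\R^3)$ to $\phi$, with a lower bound $\|\phi\|_{\dot H^1(\R^3)}\gtrsim_A\eps^{\beta}$ for some $\beta>0$, where $A$ bounds $\|g_n\|_{\dot H^1_a}$. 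Granting this, the profiles are peeled off one at a time and the decomposition, the orthogonality \eqref{E:LP5}, and the energy decoupling follow by the usual iteration.

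To prove the inverse Strichartz inequality I would first use the square function estimate (Lemma~\ref{T:sq}), Bernstein (Lemma~\ref{L:Bernie}), and interpolation to promote the $L^{10}_{t,x}$ lower bound on $e^{-it\ld}g_n$ to an $L^\infty_{t,x}$ lower bound on a single Littlewood--Paley block $P^a_{N_n}e^{-it\ld}g_n$ at some dyadic frequency $N_n$. This is the step where one cannot simply copy the Euclidean argument: since $e^{-it\ld}$ satisfies no $L^1_x\to L^\infty_x$ dispersive estimate when $a<0$ (cf.\ \cite{PSS1}), the requisite decay must instead be extracted from the heat-kernel bounds of Theorem~\ref{T:heat}. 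One then chooses $(t_n,x_n)$ essentially attaining this supremum, sets $\lambda_n:=1/N_n$, and rescales and translates; the resulting sequence is bounded in $\dot H^1(\R^3)$ by the equivalence \eqref{E:equiv I}, hence has a weak limit $\phi$ along a subsequence. Passing to a further subsequence so that $y_n=x_n/\lambda_n$ either converges in $\R^3$ or tends to infinity, the operators $\ld^n$ converge in strong resolvent sense to $-\Delta$ (in the divergent case) or to a translate of $\ld$ (in the convergent case); the linear-operator convergence statements of this section --- which play here the role that the dispersive estimate plays in the classical argument, cf.\ the discussion around \eqref{convg op4} --- then identify $\phi$ and transfer the $L^\infty_{t,x}$ lower bound into the stated lower bound on $\|\phi\|_{\dot H^1}$. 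In every case the limiting operator has form domain $\dot H^1(\R^3)$, which is why the profiles lie in that space rather than in $\dot H^1_a$.

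With the inverse Strichartz inequality in hand, the induction is standard in structure. Put $w_n^0:=f_n$; at stage $J$, if $\limsup_n\|e^{-it\ld}w_n^{J-1}\|_{L^{10}_{t,x}}>0$, apply the inequality to $w_n^{J-1}$ to split off a bubble $\phi_n^J=G_n^J[e^{-it_n^J\ld^{n_J}}\phi^J]$ and define $w_n^J$ by subtraction. The weak-limit construction forces, at each stage, the asymptotic orthogonality \eqref{E:LP5} of the parameters of distinct profiles; combined with the operator-convergence lemmas, this makes all cross terms $\langle\sqrt{\ld}\,\phi_n^j,\sqrt{\ld}\,\phi_n^k\rangle$ and $\langle\sqrt{\ld}\,\phi_n^j,\sqrt{\ld}\,w_n^J\rangle$ vanish as $n\to\infty$ and yields the Pythagorean expansion of $\|f_n\|_{\dot H^1_a}^2$. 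In particular $\sum_j\|\phi^j\|_{\dot H^1}^2\lesssim A^2$, so $\|\phi^J\|_{\dot H^1}\to0$; inserting this into the quantitative lower bound of the inverse Strichartz inequality gives $\lim_{J\to J^*}\limsup_{n\to\infty}\|e^{-it\ld}w_n^J\|_{L^{10}_{t,x}}=0$, which is exactly the stopping criterion. The $L^6$ decoupling is obtained from the refined Fatou lemma (Lemma~\ref{reflemms}) after transporting $\phi_n^j$ and $w_n^J$ back by $(G_n^j)^{-1}$, evolving, and passing to a.e.\ limits, once more using \eqref{E:LP5} to separate the bubbles. Finally, the dichotomy $t_n^j\equiv0$ or $t_n^j\to\pm\infty$ is arranged at the end: whenever $t_n^j$ stays bounded along the subsequence, pass to a further subsequence so it converges and absorb the limiting linear evolution into the profile (again via operator convergence), resetting $t_n^j\equiv0$.

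The main obstacle is the inverse Strichartz inequality, and within it the two places where the broken translation symmetry and the absence of a pointwise dispersive estimate bite: first, producing the frequency-localized $L^\infty_{t,x}$ lower bound, which must be routed through the heat-kernel bounds of Theorem~\ref{T:heat} rather than through $L^1\to L^\infty$ decay; and second, identifying the weak limit together with the subsequent vanishing of all cross terms in the Pythagorean and $L^6$ expansions, which rests entirely on the strong-resolvent and operator-norm convergence statements established earlier in this section. The remaining work --- organizing the alternatives for $y_n^j=x_n^j/\lambda_n^j$ (convergence to a point of $\R^3\setminus\{0\}$, to the origin, or divergence to infinity) and checking in each case that the profile lands in $\dot H^1(\R^3)$ and that \eqref{E:LP5} survives the successive refinements --- is lengthy but routine.
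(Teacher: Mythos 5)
Your outline follows the same architecture as the paper's proof: a refined Strichartz inequality (Lemma~\ref{lm:refs}), an inverse Strichartz inequality (Proposition~\ref{P:inverse Strichartz}) that extracts a nonzero weak $\dot H^1$ limit after rescaling, translating, and evolving, and then the standard iteration (the book-keeping is referred to \cite{KVZ12,Oberbook}), with Lemma~\ref{L:convg op} and Lemmas~\ref{L:compact},~\ref{L:converg} playing the role of the lost translation symmetry and dispersive estimate, and Lemma~\ref{reflemms} giving the $L^6$ decoupling. One imprecision worth flagging: the $L^\infty_{t,x}$ promotion step is \emph{not} where the heat-kernel bounds replace the missing dispersive estimate --- that step runs, just as when $a=0$, through Strichartz and Bernstein (an $L^{10/3}_{t,x}$ bound $\lesssim N_n^{-1}A$, then H\"older against the $L^{10}$ lower bound). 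Theorem~\ref{T:heat} is used for something else: via $\|\tilde P_N^a f\|_{L^{10}(\{|x|\le\alpha N^{-1}\})}\lesssim\alpha^{3/10-\sigma}\|f\|_{L^{10}}$ one first shows the $L^{10}$ mass of $e^{-it\ld}\tilde P_{N_n}^a f_n$ persists \emph{away from the origin}, so the concentration point satisfies $N_n|x_n|\gtrsim(\eps/A)^c$; this lower bound is precisely what keeps the rescaled dual test function $\tilde P_1^\infty\delta_0$ (equivalently, $y_\infty\neq 0$ in \eqref{convg op3}) in $\dot H^{-1}_x$, since for $\sigma>0$ the heat kernel diverges as the source approaches the origin. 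Without this quantitative control the pairing $\langle h_n,g_n\rangle$ cannot be converted into a lower bound on $\|\phi\|_{\dot H^1}$, so that step should be surfaced explicitly rather than folded into the ``routine'' case analysis at the end.
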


\begin{definition}\label{D:Ln}
Given a sequence $\{y_n\}\subset\R^3$, we define
\begin{equation*}
\ld^n:=-\Delta+ \frac{a}{|x+y_n|^2} \qtq{and} \ld^\infty:=\begin{cases} -\Delta+\frac{a}{|x+y_\infty|^2} \quad &\text{if}\quad y_n\to y_\infty\in\R^3,\\
-\Delta&\text{if}\quad |y_n|\to\infty.
\end{cases}
\end{equation*}
\end{definition}

The following typifies the manner in which the operators $\ld^n$ appear: for any $y_n\in\R^3$ and $N_n>0$,
\begin{equation}\label{equ:transcale}
N_n^{\frac12}e^{-it\ld}\big[\phi(N_nx-y_n)\big]=N_n^{\frac12} \big[e^{-iN_n^{2}t\ld^n}\phi\big]\big(N_nx-y_n\big).
\end{equation}

The operators $\ld^\infty$ appear as limits of the operators $\ld^n$ in various manners that we now justify:

\begin{lemma}[Convergence of operators]\label{L:convg op}
Fix $a>-\frac14$.  Suppose we are given sequences $t_n\to t\in\R$ and $y_n\to y_\infty \in \R^3\cup\{\infty\}$.  With $\ld^n$ and $\ld^\infty$ as in Definition~\ref{D:Ln} we have\begin{align}
\lim_{n\to\infty}\big\|\ld^n \psi-\ld^\infty \psi\big\|_{\dot H^{-1}(\R^3)}&=0 \quad\text{for all $\psi\in \dot H^1(\R^3)$}, \label{convg op1}\\
\lim_{n\to\infty}\big\|(e^{it_n\ld^n}-e^{it\ld^\infty})\psi\big\|_{\dot H^{-1}(\R^3)}&=0  \quad\text{for all $\psi\in \dot H^{-1}(\R^3)$}, \label{convg op2}\\
\lim_{n\to\infty}\big\| \bigl[\sqrt{\ld^n} - \sqrt{\ld^\infty} \,\bigr] \psi\big\|_{L^2(\R^3)}&=0 \quad\text{for all $\psi\in \dot H^1(\R^3)$}, \label{convg op5}
\end{align}
and for all admissible pairs $\frac2q+\frac3r=\frac32$ with $2< q\leq\infty$ we have
\begin{gather}
\lim_{n\to\infty}\big\|(e^{it\ld^n}-e^{it\ld^\infty})\psi\big\|_{L^q_t L^r_x(\R\times\R^3)}=0 \quad\text{for all $\psi\in L^2(\R^3)$}.\label{convg op4}
\end{gather}
If additionally $y_\infty\neq 0$, then
\begin{gather}
\lim_{n\to\infty}\big\|\big[e^{-\ld^n}-e^{-\ld^\infty}\big] \delta_0\big\|_{\dot H^{-1}(\R^3)}=0. \label{convg op3}
\end{gather}
\end{lemma}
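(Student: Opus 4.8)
All five assertions share a single architecture, which I would set up once and then specialize. \emph{Uniform inputs and reduction to a dense class.} Since $\ld^n=-\Delta+a|x+y_n|^{-2}$ is a spatial translate of $\ld$ (and $-\Delta$ is translation invariant), every translation-invariant estimate available for $\ld$ transfers to $\ld^n$ with constant independent of $n$: the Sobolev equivalences of Theorem~\ref{pro:equivsobolev} — in particular $\|\sqrt{\ld^n}f\|_{L^2}\sim\|\nabla f\|_{L^2}$, so that $\dot H^{\pm1}$ built from $\ld^n$ is uniformly isomorphic to the usual one — Hardy's inequality $\|\,|x+y_n|^{-1}f\|_{L^2}\lesssim\|\nabla f\|_{L^2}$, the heat-kernel bounds of Theorem~\ref{T:heat}, the Strichartz estimates, and the local smoothing bounds of Lemma~\ref{L:local smoothing}. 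Hence the operators occurring in \eqref{convg op1}--\eqref{convg op3}, as well as the Strichartz maps in \eqref{convg op4}, are bounded between the relevant spaces uniformly in $n$; since $C_c^\infty(\R^3\setminus\{-y_\infty\})$ is dense in each of $\dot H^1$, $L^2$, and $\dot H^{-1}$, it suffices throughout to treat $\psi$ in this dense class, for which, once $-y_n\notin\supp\psi$, $\psi$ lies in the domain of every power of $\ld^n$ and of $\ld^\infty$ and in $\dot H^{-1}\cap\dot H^1$. Writing $V_n:=\ld^n-\ld^\infty$, Hardy (used at $-y_n$ and $-y_\infty$) gives $\|V_n\|_{\dot H^1\to\dot H^{-1}}\lesssim1$ uniformly, while for such $\psi$ one has $V_n\psi\to0$ uniformly on $\supp\psi$ (which is disjoint from both singularities for $n$ large); density then yields \eqref{convg op1}. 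In particular $\ld^n\to\ld^\infty$ in the strong resolvent sense, so $e^{-t\ld^n}\to e^{-t\ld^\infty}$ and $e^{it\ld^n}\to e^{it\ld^\infty}$ strongly on $L^2$.

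\emph{Proof of \eqref{convg op2} and \eqref{convg op5}.} Each now follows by dominated convergence driven by \eqref{convg op1}. For \eqref{convg op2}: peel off the time shift via $e^{it_n\ld^n}-e^{it\ld^n}=(e^{i(t_n-t)\ld^n}-I)e^{it\ld^n}$ together with $e^{is\ld^n}-I=i\int_0^se^{i\tau\ld^n}\ld^n\,d\tau$ and $\|\ld^n\|_{\dot H^1\to\dot H^{-1}}\lesssim1$; for the rest use $(e^{it\ld^n}-e^{it\ld^\infty})\psi=-i\int_0^te^{i(t-s)\ld^n}V_ne^{is\ld^\infty}\psi\,ds$, whose $\dot H^{-1}$-norm integrand is $\lesssim\|\psi\|_{\dot H^1}$ uniformly and tends to $0$ pointwise by \eqref{convg op1}. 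For \eqref{convg op5}: from $\sqrt L=\tfrac1\pi\int_0^\infty(L+s)^{-1}L\,s^{-1/2}\,ds$ one obtains $[\sqrt{\ld^n}-\sqrt{\ld^\infty}]\psi=\tfrac1\pi\int_0^\infty s^{-1/2}\bigl[(\ld^n+s)^{-1}\ld^n-(\ld^\infty+s)^{-1}\ld^\infty\bigr]\psi\,ds$; using $(\ld+s)^{-1}\ld\psi-\psi=-s(\ld+s)^{-1}\psi$ near $s=0$ (so the integrand is bounded there, by $\psi\in\dot H^{-1}$) and $(\ld+s)^{-1}\ld\psi=\tfrac1s\ld\psi-\tfrac1{s^2}(\ld+s)^{-1}\ld^2\psi$ near $s=\infty$ (so the integrand is $O(s^{-3/2})$ there, by $\psi$ in the domain), the integrand is dominated uniformly in $n$ and tends to $0$ pointwise by strong resolvent convergence.

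\emph{Proof of \eqref{convg op3}.} Factor $e^{-\ld^n}\delta_0=e^{-\ld^n/2}\bigl(e^{-\ld^n/2}\delta_0\bigr)$. Here the hypothesis $y_\infty\neq0$ enters essentially: by Theorem~\ref{T:heat} the kernels $e^{-\ld^n/2}\delta_0=e^{-\ld/2}(\,\cdot+y_n,\,y_n)$ are bounded uniformly in $n$ in $L^{6/5}(\R^3)\hookrightarrow\dot H^{-1}$, even across their moving singularity, and they converge pointwise to $e^{-\ld^\infty/2}\delta_0$ (the two-sided heat-kernel bounds plus the strong $L^2$-convergence of the semigroups, the moving singularity handled as in \eqref{convg op1}); hence $e^{-\ld^n/2}\delta_0\to e^{-\ld^\infty/2}\delta_0$ in $\dot H^{-1}$. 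Propagating by $e^{-\ld^n/2}$ — uniformly bounded on $\dot H^{-1}$ — and invoking the heat-semigroup analogue of \eqref{convg op2} (the same Duhamel argument, with integrability near $s=0$ supplied by $\|e^{-s\ld^\infty}g\|_{\dot H^1}\lesssim s^{-1/2}\|g\|_{L^2}$) gives \eqref{convg op3}.

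\emph{Proof of \eqref{convg op4}, and the main obstacle.} Uniform Strichartz reduces us to $\psi$ in the dense class. On a bounded interval $[-T,T]$ the inhomogeneous Strichartz estimate applied to the Duhamel identity above bounds $\|(e^{it\ld^n}-e^{it\ld^\infty})\psi\|_{L^q_tL^r_x([-T,T]\times\R^3)}$ by $\|V_ne^{is\ld^\infty}\psi\|_{L^2_sL^{6/5}_x([-T,T]\times\R^3)}$ (the off-endpoint pair $(2,6)$ is admissible because $q>2$), which tends to $0$ by a localized version of \eqref{convg op1} and dominated convergence. The genuine difficulty — and what I expect to be the technical heart — is the tail $|t|>T$: the $L^1_x\to L^\infty_x$ dispersive estimate fails for $a<0$ (see \cite{PSS1}), so $\|e^{it\ld^n}\psi\|_{L^q_tL^r_x(|t|>T)}$ is not uniformly small by soft means and the naive global Duhamel estimate diverges. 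I would overcome this by exploiting two structural facts together. First, the cancellation $V_n=a\bigl(|x+y_n|^{-2}-|x+y_\infty|^{-2}\bigr)$: off a fixed large ball $B$ it gives $|V_n|\lesssim|x|^{-3}$, while near the (mutually close) singularities $-y_n,-y_\infty$ a dyadic decomposition extracts a gain of order $|y_n-y_\infty|^{1/2-\sigma}\to0$. Second, the local smoothing bounds of Lemma~\ref{L:local smoothing}, which promote $e^{is\ld^\infty}\psi$ from being merely bounded in $L^\infty_s\dot H^1_x$ to being space-time integrable on $\R_s\times B$: \eqref{LS:at z} gives $L^2_s\dot H^1_x(B)$, hence $L^2_sL^6_x(B)$ by Sobolev, hence $L^2_sL^{6/5}_x(B)$ since $B$ is bounded, with a little extra regularity — obtained by running Lemma~\ref{L:local smoothing} on $(\ld^\infty)^\theta\psi$ — absorbing the borderline singularity of $V_n$ at $-y_n$. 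Feeding the resulting weighted space-time control of $V_ne^{is\ld^\infty}\psi$ into the composition of the local smoothing dual estimate with Strichartz (via the Christ--Kiselev lemma) yields a global bound for $\|(e^{it\ld^n}-e^{it\ld^\infty})\psi\|_{L^q_tL^r_x(\R\times\R^3)}$, and the explicit smallness — $|y_n-y_\infty|^{1/2-\sigma}$ near the singularities, $|B|^{-\delta}$ at spatial infinity, and dominated convergence on the bounded part — then gives the claimed limit. The exponent bookkeeping in this last composition, and the precise quantification of the cancellation near the moving singularities, is where I expect the real work to lie; everything else is routine once \eqref{convg op1} is in hand.
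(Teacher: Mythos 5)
Your setup — reduction to $\psi\in C_c^\infty(\R^3\setminus\{-y_\infty\})$ via uniform (translation-invariant) boundedness — is exactly what the paper does, and your proof of \eqref{convg op1} agrees with theirs. For \eqref{convg op2}, \eqref{convg op5}, and \eqref{convg op3} you take a genuinely different route: a self-contained Duhamel/resolvent-calculus argument (time-shift peel-off, subordination integral $\sqrt L = \tfrac1\pi\int_0^\infty(L+s)^{-1}L\,s^{-1/2}\,ds$, factoring $e^{-\ld^n}=(e^{-\ld^n/2})^2$), whereas the paper proves \eqref{convg op4} first, derives \eqref{convg op5} from it by approximating $\sqrt{|E|}$ uniformly by Fourier integrals $\int e^{itE}\,d\mu_\eps(t)$, then an auxiliary variant for $\phi\in L^2$, and only then assembles \eqref{convg op2} from \eqref{convg op5} plus \eqref{convg op4} plus the spectral theorem. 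Your ordering is more elementary and avoids that chain of dependencies; it appears sound. One small algebra slip: in the large-$s$ regime of your \eqref{convg op5} argument the identity should read $(\ld+s)^{-1}\ld\psi=\tfrac1s\ld\psi-\tfrac1s(\ld+s)^{-1}\ld^2\psi$ (first power of $s$ in the remainder, not $s^2$), which still yields an $O(s^{-3/2})$ integrand after the $s^{-1/2}$ weight, so the conclusion stands.

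The one genuine gap is in \eqref{convg op4}. You assert that ``the naive global Duhamel estimate diverges'' and propose curing this with a cancellation factor $|y_n-y_\infty|^{1/2-\sigma}$, local smoothing (Lemma~\ref{L:local smoothing}), and Christ--Kiselev. This misdiagnoses the difficulty; none of that machinery is needed, and the local smoothing in this paper is used only for Corollary~\ref{C:Keraani3.7}, not here. The global Duhamel estimate does \emph{not} diverge, because the needed decomposition is spatial rather than temporal. After reduction to $q=\infty$, $r=2$ (the rest follows by interpolation with the endpoint Strichartz inequality) and to $\psi\in C_c^\infty(\R^3)$, Duhamel plus Strichartz give, in the case $y_n\to y_\infty\in\R^3$ (WLOG $y_\infty=0$),
\begin{align*}
\big\|(e^{it\ld^n}-e^{it\ld})\psi\big\|_{L^\infty_t L^2_x(\R\times\R^3)} \lesssim \big\|V_n\, e^{it\ld}\psi\big\|_{L^2_t L^{6/5}_x(\R\times\R^3)}.
\end{align*}
Now split $\{|x|\leq\eps\}$ and $\{|x|\geq\eps\}$ (for $|y_n|<\eps/2$): on the inner region $\|V_n\|_{L^{15/11}_x(\{|x|\le\eps\})}\lesssim\eps^{1/5}$ uniformly in $n$, because \emph{both} $|x+y_n|^{-2}$ and $|x|^{-2}$ lie individually in $L^{15/11}$ of a ball of radius $O(\eps)$ with that norm — this is where the ``moving singularity'' is absorbed, with no gain in $|y_n-y_\infty|$ required; on the outer region the cancellation $|V_n|\lesssim |y_n|/|x|^3$ gives $\|V_n\|_{L^{3/2}_x(\{|x|\ge\eps\})}\lesssim |y_n|/\eps$. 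Pair these with the \emph{globally} finite quantities $\|e^{it\ld}\psi\|_{L^2_tL^{10}_x(\R\times\R^3)}\lesssim\|(\ld)^{1/10}\psi\|_{L^2_x}$ and $\|e^{it\ld}\psi\|_{L^2_tL^6_x(\R\times\R^3)}\lesssim\|\psi\|_{L^2_x}$ to get $\text{RHS}\lesssim \eps^{1/5} + |y_n|/\eps \to \eps^{1/5}$, then let $\eps\to 0$. (The case $y_n\to\infty$ is handled separately and more simply, using the pointwise decay of $e^{-it\Delta}\psi$ for $\psi\in C_c^\infty$.) In short: the proof of \eqref{convg op4} is much softer than you anticipated; attempting to import Christ--Kiselev and local smoothing here would introduce real technical difficulty for no reason.
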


\begin{proof}
By Theorem~\ref{pro:equivsobolev}, the operators $\ld^n$ and $\ld^\infty$ are isomorphisms of $\dot H^1_x$ onto $\dot H^{-1}_x$ with bounds independent of $n$.  Consequently, it suffices to prove \eqref{convg op1} for the dense subclass $C^\infty_c(\R^3\setminus\{y_\infty\})$ of $\dot H^1(\R^3)$.  (In the case $y_\infty=\infty$, we understand $\R^3\setminus\{y_\infty\}$ as $\R^3$.)  In this subclass, convergence in $L^{6/5}_x \hookrightarrow \dot H^{-1}_x$ is trivial.

We next prove \eqref{convg op4} in the case $q=\infty$ and $r=2$.  Note that all other cases of \eqref{convg op4} then follow by interpolation with the end-point Strichartz inequality.  We also note that the Strichartz inequality permits us to assume that $\psi\in C^\infty_c(\R^3)$.

Let us suppose first that $y_n\to\infty$ so that $\ld^\infty=-\Delta$.  By the Duhamel formula and Strichartz,
\begin{equation}\label{E:duh}
\big\|(e^{it\ld^n}-e^{-it\Delta})\psi\big\|_{L^\infty_t L^2_x(\R\times\R^3)} \lesssim \big\|\tfrac{a}{|x+y_n|^2}e^{-it\Delta}\psi\big\|_{L_{t,x}^\frac{10}7(\R\times\R^3)}. 
\end{equation}
On the other hand, as $\psi\in C^\infty_c(\R^3)$, we have
$$
| [e^{-it\Delta}\psi](x)| \lesssim_\psi \langle t\rangle^{-3/2} \bigl(1+\tfrac{|x|}{\langle t\rangle}\bigr)^{-m}
$$
for any $m>0$. That RHS\eqref{E:duh} converges to zero as $n\to\infty$ follows easily from this and the fact that $y_n\to\infty$.

Let us suppose now that $y_n\to y_\infty\in\R^3$.  Translating $\psi$ if necessary, we may assume that $y_\infty=0$; this implies $\ld^\infty=\ld$.  Adopting the notation
\begin{equation*}
V_n(x) = \tfrac{a}{|x+y_n|^2}-\tfrac{a}{|x|^2} = - a \tfrac{y_n\cdot(2x+y_n)}{|x|^2|x+y_n|^2}
\end{equation*}
and proceeding as previously using the Duhamel formula and Strichartz inequality, we obtain
\begin{equation}\label{E:duh'}
\big\|(e^{it\ld^n}-e^{it\ld})\psi\big\|_{L^\infty_t L^2_x(\R\times\R^3)} \lesssim \big\|V_n e^{it\ld}\psi\big\|_{L^2_t L^{\frac65}_x(\R\times\R^3)}. 
\end{equation}
Given any $\eps>0$ and $|y_n|<\frac\eps2$, we have
\begin{align*}
\text{RHS\eqref{E:duh'}} &\lesssim  \|V_n\|_{L^{\frac{15}{11}}_x(\{|x|\leq \eps\} }  \| e^{it\ld}\psi \Big\|_{L_t^2L_x^{10}}
	+ \|V_n\|_{L^{\frac32}_x(\{|x|\geq \eps\} }  \| e^{it\ld}\psi \Big\|_{L_t^2L_x^{6}}\\
&\lesssim \eps^{\frac15} \big\| (\ld)^{1/10} \psi\big\|_{L_x^2} + |y_n| \eps^{-1} \|\psi\|_{L_x^2} \\
&\lesssim_\psi \eps^{\frac15} + o(1)\qtq{as} n\to\infty.
\end{align*}
As $\eps>0$ was arbitrary, we deduce that $\text{RHS\eqref{E:duh'}}\to 0$ as $n\to\infty$, thereby completing the proof of \eqref{convg op4}.

Next we look at \eqref{convg op3}.  Notice that $y_\infty\neq 0$ is necessary when $\sigma>0$; otherwise, $e^{-\ld^\infty}\delta_0 \equiv \infty$ as can be seen from Theorem~\ref{T:heat}.  Let us consider first the case $y_n\to\infty$, for which $\ld^\infty=-\Delta$.  From Theorem~\ref{pro:equivsobolev}, we see that $e^{-s\ld^n}$ is bounded on $\dot H^{-1}_x$ uniformly in $n$ and $s\geq 0$.   From the Duhamel formula and Sobolev embedding, we may then deduce that
\begin{align*}
\big\|\big[e^{-\ld^n}-e^{-\ld^\infty}\big] \delta_0\big\|_{\dot H^{-1}_x}
	&\lesssim \int_0^1 \big\| e^{-(1-s)\ld^n} \tfrac{a}{|x+y_n|^2} e^{s\Delta} \delta_0\big\|_{\dot H^{-1}_x}\,ds \\
&\lesssim \big\| \tfrac{a}{|x+y_n|^2} e^{s\Delta} \delta_0\big\|_{L^1_s L^{6/5}_x ([0,1]\times\R^3)}.
\end{align*}
To estimate this quantity, we first observe that for $|y_n|\geq 2R >0$, we have
\begin{align*}
\| \tfrac{a}{|x+y_n|^2} e^{s\Delta} \delta_0 \|_{L^{6/5}_x(\{|x+y_n|<R\})}
	&\lesssim  \| \tfrac{a}{|x+y_n|^2} \|_{L^{6/5}_x(\{|x+y_n|<R\})} \| e^{s\Delta} \delta_0 \|_{L^\infty_x(\{|x+y_n|<R\})} \\
	&\lesssim R^{1/2} s^{-3/2} e^{-c\frac{|y_n|^2}{s}},
\end{align*} 
which converges to zero in $L^1_s([0,1])$ as $n\to \infty$, irrespective of $R>0$.  On the other hand,
\begin{align*}
\| \tfrac{a}{|x+y_n|^2} e^{s\Delta} \delta_0 \|_{L^{6/5}_x(\{|x+y_n|>R\})}
	&\lesssim  \| \tfrac{a}{|x+y_n|^2} \|_{L^{3}_x(\{|x+y_n|>R\})} \| e^{s\Delta} \delta_0 \|_{L^2_x(\R^3)} \lesssim R^{-1} s^{-\frac34},
\end{align*}
which may be made arbitrarily small in $L^1_s([0,1])$ by choosing $R$ large.  This completes the proof of \eqref{convg op3} in the case $y_n\to\infty$.

Suppose now that $y_n\to y_\infty\in \R^3\setminus \{0\}$.  Proceeding as in the previous case, the problem reduces to showing that
$$
\big\| V_n e^{-s\ld^\infty} \delta_0\big\|_{L^1_s L^{6/5}_x ([0,1]\times\R^3)} \to 0 \qtq{as} n\to\infty,
$$
where
\begin{equation*}
V_n(x) = \tfrac{a}{|x+y_n|^2}-\tfrac{a}{|x+y_\infty|^2} = a \tfrac{(y_\infty-y_n)\cdot(2x+y_n+y_\infty)}{|x+y_\infty|^2|x+y_n|^2}.
\end{equation*}
This can be shown in a manner similar to that used in the previous case.  Suppose $0<\eps<\frac13|y_\infty|$ and $|y_n-y_\infty|<\frac13\eps$, then
\begin{align*}
\| V_n e^{-s\ld^\infty} \delta_0 \|_{L^{6/5}_x(\{|x+y_\infty|>\eps\})}
	&\lesssim  \| V_n \|_{L^\infty_x(\{|x+y_\infty|>\eps\})} \| e^{-s\ld^\infty} \delta_0 \|_{L^{6/5}_x(\R^3)} \\
	&\lesssim |y_n-y_\infty| \eps^{-3} s^{-1/4},
\end{align*}
which converges to zero in $L^1_s([0,1])$ as $n\to \infty$, irrespective of $\eps>0$, while
\begin{align*}
\| V_n e^{-s\ld^\infty} \delta_0 \|_{L^{6/5}_x(\{|x+y_\infty|<\eps\})}
	&\lesssim  \| V_n \|_{L^{6/5}_x(\{|x+y_\infty|<\eps\})} \| e^{-s\ld^\infty} \delta_0 \|_{L^\infty_x(\{|x+y_\infty|<\eps\})} \\
	&\lesssim \eps^{1/2} s^{-3/2} \bigl(1+\tfrac{\sqrt s}{\eps}\bigr)^{2(\sigma\vee 0)} e^{-\frac c{s}|y_\infty|^2},
\end{align*}
which converges to zero in $L^1_s([0,1])$ as $\eps\to0$.  This completes the proof of \eqref{convg op3}.

We turn now to the proof of \eqref{convg op5}; once again it suffices to consider $\psi\in C^\infty_c(\R^3\setminus\{y_\infty\})$.  Given $\eps>0$, let us define
$$
f_\eps(E) := \begin{cases} \sqrt{|E|} &\text{ if } |E|<\eps^{-2} \\ \eps^{-1} &\text{ if } |E|\geq \eps^{-2}.\end{cases}
$$
By mollification and then Fourier inversion, there is a finite (signed) measure $d\mu_\eps$ (with $\mu_\eps(\{0\})=\eps^{-1}$) so that
$$
\sup_{E\in\R} \biggl| f_\eps(E) - \int e^{itE} \,d\mu_\eps(t)\biggr| \leq \eps.
$$
Combining this with \eqref{convg op4} yields
$$
\limsup_{n\to\infty} \bigl\| f_\eps(\ld^\infty)\psi - f_\eps(\ld^n)\psi \bigr\|_{L^2_x} \leq 2\eps \|\psi\|_{L^2_x}.
$$
On the other hand, using the fact that $|f_\eps(E) - \sqrt{|E|}|\leq \eps |E|$ we get
\begin{align*}
\bigl\| \sqrt{\ld^n}\psi - f_\eps(\ld^n)\psi \bigr\|_{L^2_x} &+ \bigl\| \sqrt{\ld^\infty}\psi - f_\eps(\ld^\infty)\psi \bigr\|_{L^2_x}\\
&\leq  \eps \bigl\| \ld^\infty \psi \bigr\|_{L^2_x} + \eps \bigl\| \ld^n \psi \bigr\|_{L^2_x} \to 2\eps \bigl\| \ld^\infty \psi \bigr\|_{L^2_x}
\end{align*}
as $n\to\infty$.  As $\eps>0$ was arbitrary, this completes the proof of \eqref{convg op5}.

Note that we also have the following analogue of \eqref{convg op5}:
\begin{equation}
\lim_{n\to\infty}\big\| \bigl[\sqrt{\ld^n} - \sqrt{\ld^\infty} \,\bigr] \phi\big\|_{\dot H^{-1}(\R^3)}=0 \quad\text{for all $\phi\in L^2(\R^3)$} \label{convg op6}.
\end{equation}
Indeed, any such $\phi\in L^2_x$ can be written as $\sqrt{\ld^\infty} \psi$ for some $\psi\in \dot H^1_x$.  The claim then follows by expanding
$$
\bigl[ \sqrt{\ld^n} - \sqrt{\ld^\infty} \,\bigr] \phi = \sqrt{\ld^n} \bigl[\sqrt{\ld^\infty} - \sqrt{\ld^n} \,\bigr] \psi + \bigl[\ld^n - \ld^\infty\bigr]\psi
$$
and then applying \eqref{convg op5} and \eqref{convg op1}.

Now only one part of the lemma still requires justification, namely, \eqref{convg op2}.  Theorem~\ref{pro:equivsobolev} guarantees that any $\psi\in \dot H^{-1}_x$ can be written as $\sqrt{\ld^\infty} \phi$ for some $\phi\in L^2_x$.  Decomposing
\begin{align*}
(e^{it_n\ld^n}-e^{it\ld^\infty}) \sqrt{\ld^\infty} \phi &= e^{it_n\ld^n} \bigl[ \sqrt{\ld^\infty} - \sqrt{\ld^n} \bigr]\phi
	+ \sqrt{\ld^n} \bigl[e^{it_n\ld^n} - e^{it_n\ld^\infty}\bigr]\phi\\
&\quad{} + \sqrt{\ld^n} \bigl[e^{it_n\ld^\infty} - e^{it\ld^\infty}\bigr]\phi + \bigl[\sqrt{\ld^n} - \sqrt{\ld^\infty}\bigr] e^{it \ld^\infty}\phi
\end{align*}
and applying \eqref{convg op6}, \eqref{convg op4}, and Theorem~\ref{pro:equivsobolev}, we deduce that
$$
\limsup_{n\to\infty}\big\|(e^{it_n\ld^n}-e^{it\ld^\infty})\psi\big\|_{\dot H^{-1}_x}
	\leq \limsup_{n\to\infty}\big\|(e^{it_n\ld^\infty}-e^{it\ld^\infty})\phi\big\|_{L^2_x}.
$$
This last limit is easily seen to vanish by applying the spectral theorem.  This completes the proof of \eqref{convg op2} and with it, that of Lemma~\ref{L:convg op}.
\end{proof}

Our next result is essential for proving the decoupling of the potential energy in Theorem~\ref{T:LPD}.

\begin{corollary}\label{C:L6}
Fix $a>-\frac14$.  Given $\psi\in \dot H^1_x$, $t_n\to \pm\infty$, and any sequence $\{y_n\}\subset\R^3$, we have
$$
\| e^{it_n\ld^n} \psi \|_{L^6_x} \to 0 \qtq{as} n\to\infty,
$$
where $\ld^n$ is as in Definition~\ref{D:Ln}.
\end{corollary}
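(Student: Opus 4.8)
The natural strategy is the standard dispersive‐decay argument combined with the operator convergence machinery of Lemma~\ref{L:convg op}. First I would reduce to a dense class: by Theorem~\ref{pro:equivsobolev} and Sobolev embedding, $\psi\mapsto e^{it\ld^n}\psi$ is bounded from $\dot H^1_x$ to $L^6_x$ uniformly in $n$ and $t$, so it suffices to prove the claim for $\psi$ in a dense subclass of $\dot H^1(\R^3)$, namely $C^\infty_c(\R^3\setminus\{y_\infty\})$ (interpreting this as $C^\infty_c(\R^3)$ when $y_n\to\infty$). Next I would pass, after extracting a subsequence, to the limiting operator: by Definition~\ref{D:Ln}, $\ld^n$ converges to some $\ld^\infty$, which is either $\ld$ (recentered) if $y_n\to y_\infty\in\R^3$ or $-\Delta$ if $|y_n|\to\infty$.

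The decoupling of $e^{it_n\ld^n}\psi$ from the limiting flow is the content of \eqref{convg op4}: since $\psi\in L^2_x$, one has $\|(e^{it\ld^n}-e^{it\ld^\infty})\psi\|_{L^q_tL^r_x}\to 0$ for any admissible pair with $q>2$, in particular for $(q,r)=(\infty,6)$... except that $r=6$ is \emph{not} an admissible pair here (the admissible line is $\frac2q+\frac3r=\frac32$ with $r=6$ forcing $q=\infty$, the forbidden double endpoint replaced by $L^\infty_tL^2_x$). The fix is to interpolate: write $\|g\|_{L^6_x}\lesssim \|g\|_{L^2_x}^{1/2}\|\nabla g\|_{L^2_x}^{1/2}$ by Sobolev–Gagliardo–Nirenberg, apply this with $g=e^{it_n\ld^n}\psi - e^{it_n\ld^\infty}\psi$ using \eqref{convg op4} at, say, $(q,r)=(4,3)$ evaluated through a further interpolation, together with the uniform $\dot H^1_x$ bound on both terms (via Theorem~\ref{pro:equivsobolev}); this gives $\|e^{it_n\ld^n}\psi - e^{it_n\ld^\infty}\psi\|_{L^6_x}\to 0$ along the subsequence. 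Thus it remains to show $\|e^{it_n\ld^\infty}\psi\|_{L^6_x}\to 0$ as $t_n\to\pm\infty$.

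For the limiting flow this is classical dispersive decay. When $\ld^\infty=-\Delta$ it is immediate from the $L^1_x\to L^\infty_x$ estimate for $e^{it\Delta}$: $\|e^{it_n\Delta}\psi\|_{L^\infty_x}\lesssim |t_n|^{-3/2}\|\psi\|_{L^1_x}\to 0$, and interpolating against the conserved $L^2_x$ norm yields decay in $L^6_x$. When $\ld^\infty=\ld$ (the case $a$ possibly negative), the $L^1_x\to L^\infty_x$ dispersive estimate is available for $a>0$ by \cite{Fanelli} but fails for $a<0$ \cite{PSS1}, so one cannot argue so directly; instead, decay of $\|e^{it\ld}\psi\|_{L^6_x}$ as $t\to\pm\infty$ follows from the Strichartz bound $\|e^{it\ld}\psi\|_{L^2_tL^6_x(\R\times\R^3)}<\infty$ (which holds for all $a>-\frac14$ by \cite{BPST1}) together with the near-continuity of $t\mapsto e^{it\ld}\psi$ in $L^6_x$: an $L^2_t$-integrable nonnegative function that does not tend to zero along a subsequence $t_n\to\infty$ would, by local constancy (uniform continuity of the $\dot H^1_x$-valued, hence $L^6_x$-valued, orbit on unit time intervals following from the equation $i\partial_t u = \ld u$ and Sobolev embedding), contradict finiteness of the $L^2_tL^6_x$ norm. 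This finishes the argument, since the limit is independent of the subsequence and the dense class was arbitrary.

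The main obstacle is precisely the absence of a pointwise dispersive estimate for $e^{-it\ld}$ when $a<0$ and the fact that $L^6_x$ sits off the admissible Strichartz line, so \eqref{convg op4} cannot be quoted verbatim; both are handled by the Gagliardo–Nirenberg interpolation trick (using the uniform $\dot H^1_x$ control furnished by Theorem~\ref{pro:equivsobolev}) to transfer from admissible pairs to $L^6_x$, and by extracting decay of the limiting flow from its global Strichartz bound rather than from a dispersive estimate.
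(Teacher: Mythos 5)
Your reduction to $\psi\in C^\infty_c(\R^3\setminus\{y_\infty\})$ and your treatment of the limiting flow (showing $\|e^{it_n\ld^\infty}\psi\|_{L^6_x}\to 0$ from finiteness of $\|e^{it\ld^\infty}\psi\|_{L^2_tL^6_x}$ via Strichartz together with local constancy of the orbit) are essentially the paper's argument; the paper phrases the second point by noting $t\mapsto\|e^{it\ld^\infty}\psi\|_{L^6_x}$ is both in $L^2_t(\R)$ and Lipschitz (the derivative is bounded by $\|\ld^\infty\psi\|_{\dot H^1_x}$), and applies this without ever invoking the $L^1_x\to L^\infty_x$ dispersive bound even when $\ld^\infty=-\Delta$.

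There is, however, a genuine gap in your step showing $\|e^{it_n\ld^n}\psi-e^{it_n\ld^\infty}\psi\|_{L^6_x}\to 0$. The interpolation inequality you invoke, $\|g\|_{L^6_x}\lesssim \|g\|_{L^2_x}^{1/2}\|\nabla g\|_{L^2_x}^{1/2}$, is false in $\R^3$: in three dimensions $L^6_x$ sits exactly at the Sobolev endpoint, $\|g\|_{L^6_x}\lesssim\|\nabla g\|_{L^2_x}$, and the Gagliardo--Nirenberg exponent on the gradient factor is $1$, not $\tfrac12$; there is no gain from knowing $\|g\|_{L^2_x}$ small. Consequently the $L^\infty_tL^2_x$ convergence furnished by \eqref{convg op4}, combined with a uniform $\dot H^1_x$ bound, does not yield $L^6_x$ convergence. (Choosing another admissible pair such as $(q,r)=(4,3)$ does not help either: those are time-integrated norms and do not control the propagator at the specific times $t_n$.) The missing ingredient is \eqref{convg op5}, the $L^2_x$ convergence of $[\sqrt{\ld^n}-\sqrt{\ld^\infty}]\psi$. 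The paper applies $\sqrt{\ld^\infty}$ to the difference and telescopes it into three terms: one handled by \eqref{convg op4} applied to $\sqrt{\ld^\infty}\psi\in L^2_x$, and two handled by \eqref{convg op5} together with the uniform $L^2_x$-boundedness of $e^{it_n\ld^n}$. This gives genuine $\dot H^1_a(\R^3)$ (hence $L^6_x$) convergence of the difference, which your interpolation scheme cannot reproduce.
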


\begin{proof}
Without loss of generality, we may assume that $y_n\to y_\infty\in \R^3\cup \{\infty\}$.  Let $\ld^\infty$ be as in Definition~\ref{D:Ln}.

Writing
\begin{align*}
\sqrt{\ld^\infty} \bigl[e^{it_n\ld^n} \psi - e^{it_n\ld^\infty} \psi\bigr]  &=  [\sqrt{\ld^\infty}- \sqrt{\ld^n}]e^{it_n\ld^n} \psi +e^{it_n\ld^n} [\sqrt{\ld^n}- \sqrt{\ld^\infty}]\psi\\
&\quad+ [e^{it_n\ld^n} - e^{it_n\ld^\infty}]\sqrt{\ld^\infty}\psi
\end{align*}
and using \eqref{convg op5}, and \eqref{convg op4}, we see that $e^{it_n\ld^n} \psi - e^{it_n\ld^\infty} \psi\to 0$ in $\dot H^1_x$.  Thus, by Sobolev embedding it suffices to show
\begin{align}\label{941}
\| e^{it_n\ld^\infty} \psi \|_{L^6_x} \to 0 \qtq{as} n\to\infty;
\end{align}
moreover, by density, we may assume $\psi\in C_c^\infty(\R^3\setminus\{y_\infty\})$.

Let $F(t):=\| e^{it\ld^\infty} \psi \|_{L^6_x}$.  By the Strichartz inequality, $F\in L_t^2(\R)$.  Moreover, $F$ is Lipschitz; indeed, by Sobolev embedding, 
\begin{align*}
|\partial_t F(t)| \leq \| \partial_t[e^{it\ld^\infty} \psi] \|_{L^6_x} \lesssim \| \ld^\infty \psi\|_{\dot H^1_x}\lesssim_\psi 1.
\end{align*}
Therefore, $F(t_n)\to 0$ as $n\to \infty$.  This completes the proof of the corollary.
\end{proof}

As we are dealing with an energy-critical problem, we also need convergence of propagators in suitable energy-critical spaces.  For this we need the assumption $a>-\frac14+\frac1{25}$.

\begin{corollary}\label{C:L10}
Fix $a>-\frac14+\frac1{25}$.  Suppose we are given $y_n\to y_\infty \in \R^3\cup\{\infty\}$ and let $\ld^n$ and $\ld^\infty$ be as in Definition~\ref{D:Ln}.  Then
\begin{align*}
\lim_{n\to\infty}\big\|(e^{it\ld^n}-e^{it\ld^\infty})\psi\big\|_{L_{t,x}^{10}(\R\times\R^3)}=0  \quad\text{for all $\psi\in \dot H^1(\R^3)$}.
\end{align*}
\end{corollary}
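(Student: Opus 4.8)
The plan is to upgrade the Strichartz-space convergence \eqref{convg op4} to the energy-critical space $L^{10}_{t,x}$ by interpolating it against a uniform-in-$n$ bound that holds for smooth, compactly supported data, exactly as in the passage from \cite[Lemma~2.13]{KVZ12} to its energy-critical counterpart.

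\emph{Reduction to nice data.} Since $a>-\frac14+\frac1{25}$ forces $\sigma<\frac3{10}$, Theorem~\ref{pro:equivsobolev}, the Sobolev embedding $\dot H^{1,30/13}_x\hookrightarrow L^{10}_x$, the Strichartz estimate at the $L^2$-admissible pair $(10,\tfrac{30}{13})$, and \eqref{E:equiv I} give
\[
\|e^{it\ld^n}\psi\|_{L^{10}_{t,x}}\lesssim\|\nabla e^{it\ld^n}\psi\|_{L^{10}_tL^{30/13}_x}\lesssim\|e^{it\ld^n}\sqrt{\ld^n}\,\psi\|_{L^{10}_tL^{30/13}_x}\lesssim\|\sqrt{\ld^n}\,\psi\|_{L^2_x}\sim\|\nabla\psi\|_{L^2_x},
\]
uniformly in $n$, and likewise for $\ld^\infty$. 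Hence $\psi\mapsto(e^{it\ld^n}-e^{it\ld^\infty})\psi$ is bounded from $\dot H^1(\R^3)$ to $L^{10}_{t,x}$ uniformly in $n$, so by a routine approximation argument it suffices to prove the claim for $\psi$ in the dense subclass $C^\infty_c(\R^3\setminus\{-y_\infty\})$ of $\dot H^1(\R^3)$ (read as $C^\infty_c(\R^3)$ when $y_\infty=\infty$).

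\emph{The uniform bound.} For such $\psi$ I would show that there is an exponent $r_1>10$ with $\|e^{it\ld^n}\psi\|_{L^{10}_tL^{r_1}_x}+\|e^{it\ld^\infty}\psi\|_{L^{10}_tL^{r_1}_x}\lesssim_\psi 1$ for all large $n$. The point is that, because $y_n\to y_\infty$, the singularity $-y_n$ of the potential eventually lies at a fixed positive distance from $\supp\psi$ (or escapes to infinity), so $\tfrac{a}{|x+y_n|^2}\psi$ is bounded in $L^2_x$ uniformly in large $n$; hence $\|\ld^n\psi\|_{L^2_x}\lesssim_\psi 1$, and the spectral theorem gives $\|(\ld^n)^{(1+s_1)/2}\psi\|_{L^2_x}\le\|\psi\|_{L^2_x}^{(1-s_1)/2}\|\ld^n\psi\|_{L^2_x}^{(1+s_1)/2}\lesssim_\psi 1$ for every $s_1\in(0,1)$. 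Choosing $s_1\in(0,\tfrac3{10}-\sigma)$ — possible exactly because $\sigma<\tfrac3{10}$ — Theorem~\ref{pro:equivsobolev} yields $\||\nabla|^{1+s_1}f\|_{L^{30/13}_x}\lesssim\|(\ld^n)^{(1+s_1)/2}f\|_{L^{30/13}_x}$; combined with the Sobolev embedding $\dot H^{1+s_1,30/13}_x\hookrightarrow L^{r_1}_x$ (where $\tfrac1{r_1}=\tfrac{13}{30}-\tfrac{1+s_1}3\in(0,\tfrac1{10})$), Bernstein/Strichartz, and unitarity of $e^{it\ld^n}$ on $L^2$,
\[
\|e^{it\ld^n}\psi\|_{L^{10}_tL^{r_1}_x}\lesssim\||\nabla|^{1+s_1}e^{it\ld^n}\psi\|_{L^{10}_tL^{30/13}_x}\lesssim\|e^{it\ld^n}(\ld^n)^{(1+s_1)/2}\psi\|_{L^{10}_tL^{30/13}_x}\lesssim\|(\ld^n)^{(1+s_1)/2}\psi\|_{L^2_x}\lesssim_\psi 1.
\]
The same estimate holds for $\ld^\infty$ (trivially when $y_\infty=\infty$; using that $\ld^\infty\psi\in C^\infty_c(\R^3)$ when $y_\infty\in\R^3$ since $\supp\psi$ avoids $-y_\infty$).

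\emph{Interpolation.} Applying \eqref{convg op4} at the $L^2$-admissible pair $(10,\tfrac{30}{13})$ gives $\|(e^{it\ld^n}-e^{it\ld^\infty})\psi\|_{L^{10}_tL^{30/13}_x}\to 0$. Since $\tfrac{13}{30}>\tfrac1{10}>\tfrac1{r_1}$, there is $\theta\in(0,1)$ with $\tfrac{\theta}{30/13}+\tfrac{1-\theta}{r_1}=\tfrac1{10}$, and H\"older in $x$ followed by H\"older in $t$ (both terms sharing the time exponent $10$) yields
\[
\|(e^{it\ld^n}-e^{it\ld^\infty})\psi\|_{L^{10}_{t,x}}\le\|(e^{it\ld^n}-e^{it\ld^\infty})\psi\|_{L^{10}_tL^{30/13}_x}^{\theta}\,\|(e^{it\ld^n}-e^{it\ld^\infty})\psi\|_{L^{10}_tL^{r_1}_x}^{1-\theta}\longrightarrow 0\qtq{as}n\to\infty .
\]
Together with the reduction this proves the corollary. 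The crux is the uniform bound at regularity slightly above the energy-critical scaling: this is precisely where $a>-\frac14+\frac1{25}$ is used, since it guarantees the slack $\sigma<\tfrac3{10}$ that lets Theorem~\ref{pro:equivsobolev} identify the $\ld^n$-derivative of order $1+s_1$ with the Euclidean one in $L^{30/13}_x$; without this slack one cannot cross the exponent $L^{10}_{t,x}$ by interpolating against \eqref{convg op4}.
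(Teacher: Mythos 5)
Your proposal is correct and is essentially the paper's own argument: reduce to $\psi$ smooth and compactly supported away from the singularity, then interpolate the $L^{10}_{t,x}$ norm between an admissible Strichartz norm, which tends to zero by \eqref{convg op4}, and a slightly supercritical norm that is bounded uniformly in $n$ via Theorem~\ref{pro:equivsobolev} at regularity just above $1$ (exactly where $\sigma<\tfrac3{10}$ enters). The only cosmetic difference is the choice of interpolation spaces: the paper uses the diagonal pair $L^{10/3}_{t,x}$ and $L^{q}_{t,x}$ with $q>10$, whereas you fix the time exponent at $10$ and move the spatial exponent from $\tfrac{30}{13}$ to $r_1>10$, bounding the latter via $\|\ld^n\psi\|_{L^2}\lesssim_\psi 1$ and spectral interpolation rather than the reverse Sobolev equivalence in $L^2$.
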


\begin{proof}
By Strichartz, it suffices to prove the claim for $\psi\in C_c^\infty(\R^3\setminus\{y_\infty\})$.  By H\"older,
\begin{align*}
\big\|(e^{it\ld^n}&-e^{it\ld^\infty})\psi\big\|_{L_{t,x}^{10}} \leq \big\|(e^{it\ld^n}-e^{it\ld^\infty})\psi\big\|_{L_{t,x}^{\frac{10}3}}^\theta\big\|(e^{it\ld^n}-e^{it\ld^\infty})\psi\big\|_{L_{t,x}^{q}}^{1-\theta}
\end{align*}
for any $q>10$ and $\theta\in(0,1)$ given by $\frac1{10} = \frac{3\theta}{10} + \frac{1-\theta}q$.  By \eqref{convg op4}, the first term on the right hand side above converges to zero as $n\to \infty$.  On the other hand, by Sobolev embedding, Theorem~\ref{pro:equivsobolev}, and Strichartz,
\begin{align*}
\big\|(e^{it\ld^n}-e^{it\ld^\infty})\psi\big\|_{L_{t,x}^{q}(\R\times\R^3)} \lesssim \big\| |\nabla|^{\frac32-\frac5q} \psi\|_{L^2_x} \lesssim 1,
\end{align*}
provided $q$ is chosen sufficiently close to $10$ so that Theorem~\ref{pro:equivsobolev} may be applied.
\end{proof}

To prove Theorem~\ref{T:LPD}, we follow the general method outlined in \cite{KVnote}.  For the problems treated in those notes, all defects of compactness in the Strichartz inequality are associated with symmetries of the equation.  The presence of a potential in \eqref{equ1.1} breaks the translation symmetry.  The new difficulties that this introduces are reminiscent of those overcome in \cite{KVZ12}, which derived a linear profile decomposition in the exterior of a convex obstacle.  This earlier work provides inspiration (but little technical overlap) for much of what follows.

The bulk of the proof of Theorem~\ref{T:LPD} is effected by the inductive application of an inverse Strichartz inequality; see Proposition~\ref{P:inverse Strichartz} below.  This in turn relies of a refinement of the classical Strichartz inequality:

\begin{lemma}[Refined Strichartz]\label{lm:refs} Fix $a> -\frac14+\frac1{25}$. For $f\in \dot H^1_a(\R^3)$, we have
\begin{align*}
\|e^{-it\ld} f\|_{L^{10}_{t,x}(\R\times\R^3)}\lesssim \|f\|_{\dot H^1_a(\R^3)}^{\frac 15} \sup_{N\in 2^{\Z}}\|e^{-it\ld} f_N\|_{L^{10}_{t,x}(\R\times\R^3)}^{\frac45}.
\end{align*}
\end{lemma}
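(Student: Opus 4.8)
The plan is to reduce the refined Strichartz estimate to a square-function bound combined with a bilinear interpolation argument, following the familiar template for such refinements (as in the Euclidean case treated in \cite{KVnote}). First I would use the square function estimate, Lemma~\ref{T:sq}, to write
\[
\|e^{-it\ld} f\|_{L^{10}_{t,x}} \lesssim \Bigl\| \Bigl(\sum_N |e^{-it\ld} f_N|^2\Bigr)^{1/2}\Bigr\|_{L^{10}_{t,x}},
\]
and then expand the square inside. The square of the $L^{10}_{t,x}$ norm is the $L^5_{t,x}$ norm of $\sum_{N,M} (e^{-it\ld}f_N)\overline{(e^{-it\ld}f_M)}$, so by symmetry and the triangle inequality the key is to control
\[
\sum_{N\le M} \bigl\| (e^{-it\ld} f_N)\,\overline{(e^{-it\ld} f_M)} \bigr\|_{L^5_{t,x}}
\]
by the right-hand side squared. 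The gain comes from a bilinear estimate that wins a power of $N/M$ for $N\le M$, which one then sums geometrically.

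The main step is therefore a frequency-localized bilinear estimate of the form
\[
\bigl\| (e^{-it\ld} f_N)\,(e^{-it\ld} g_M) \bigr\|_{L^5_{t,x}} \lesssim \Bigl(\tfrac{N}{M}\Bigr)^{\alpha} \|e^{-it\ld} f_N\|_{L^{10}_{t,x}} \|e^{-it\ld} g_M\|_{L^{10}_{t,x}}
\]
for some $\alpha>0$ and $N\le M$. To prove this I would interpolate between two endpoints. One endpoint is the trivial bound obtained by H\"older in space-time, $\|F G\|_{L^5_{t,x}} \le \|F\|_{L^{10}_{t,x}}\|G\|_{L^{10}_{t,x}}$, with no gain. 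The other endpoint is an estimate that exploits the frequency separation: here I would use Bernstein (Lemma~\ref{L:Bernie}) on the lower frequency piece together with a Strichartz estimate, controlling $\|e^{-it\ld} f_N\|_{L^q_t L^\infty_x}$ by $N^{3/r}\|e^{-it\ld} f_N\|_{L^q_t L^r_x}$ for an admissible pair, and then pairing this against $\|e^{-it\ld}g_M\|$ in a dual Strichartz space; the factor $N^{3/r}M^{-3/r}$ (equivalently a power of $N/M$, after also using Bernstein and the square-function normalization to convert between $\dot H^1_a$-frequency-localized norms and $L^{10}_{t,x}$-norms at frequency $M$) provides the decay. Interpolating these two bounds yields the claimed $(N/M)^\alpha$ gain.

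Once the bilinear gain is in hand, the proof concludes routinely: writing $c_N := \|e^{-it\ld} f_N\|_{L^{10}_{t,x}}$ and $S:=\sup_N c_N$, one estimates
\[
\|e^{-it\ld}f\|_{L^{10}_{t,x}}^2 \lesssim \sum_{N\le M} \Bigl(\tfrac{N}{M}\Bigr)^\alpha c_N c_M \lesssim S \sum_{N\le M}\Bigl(\tfrac NM\Bigr)^\alpha c_N \lesssim S \sum_N c_N,
\]
and then a further application of Bernstein and the square-function estimate bounds $\sum_N c_N$ — or rather the relevant $\ell^p$-type sum of the $c_N$ — by $\|f\|_{\dot H^1_a}$, possibly with the help of Young's inequality in the dyadic variable; balancing the powers produces the exponents $\tfrac15$ and $\tfrac45$ in the statement. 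The one genuine obstacle is the bilinear endpoint estimate: one must be careful that all Strichartz and Bernstein estimates invoked are valid for $\ld$, which forces the restriction $a>-\tfrac14+\tfrac1{25}$ (so that $\sigma<\tfrac3{10}$ and Theorem~\ref{pro:equivsobolev} applies on the Lebesgue exponents appearing), and one must check that the interpolation between the trivial bound and the frequency-gain bound leaves a strictly positive power $\alpha$ after all the normalizations. Everything else is bookkeeping with Littlewood--Paley pieces.
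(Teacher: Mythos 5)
Your opening move (the square function estimate, Lemma~\ref{T:sq}) is the paper's as well, and your idea to exploit a frequency gain via Bernstein--Strichartz on separated Littlewood--Paley pieces is in the right spirit. However, the way you set up the bookkeeping does not close, and the obstacle is not a technicality but a structural one.

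You propose to expand $\|e^{-it\ld}f\|_{L^{10}_{t,x}}^2 = \big\||e^{-it\ld}f|^2\big\|_{L^5_{t,x}}$ into a bilinear sum and interpolate, for each pair $N\le M$, between the trivial H\"older bound $\|F_NF_M\|_{L^5}\le c_Nc_M$ (where $c_N:=\|e^{-it\ld}f_N\|_{L^{10}_{t,x}}$) and a Bernstein-boosted bound $\|F_NF_M\|_{L^5}\lesssim (N/M)^{\alpha}\|f_N\|_{\dot H^1_a}\|f_M\|_{\dot H^1_a}$. Interpolating with a weight $\theta\in(0,1]$ gives
\begin{equation*}
\|e^{-it\ld}f\|_{L^{10}_{t,x}}^2 \lesssim \Bigl(\sup_N c_N\Bigr)^{2(1-\theta)} \sum_{N\le M}\Bigl(\tfrac{N}{M}\Bigr)^{\theta\alpha}\|f_N\|_{\dot H^1_a}^{\theta}\|f_M\|_{\dot H^1_a}^{\theta},
\end{equation*}
and after Schur's test the remaining sum is $\sum_N \|f_N\|_{\dot H^1_a}^{2\theta}$. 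For $\theta<1$ this is an $\ell^{2\theta}$-norm with $2\theta<2$, which is \emph{not} controlled by $\|f\|_{\dot H^1_a}^{2\theta}\sim\bigl(\sum_N\|f_N\|_{\dot H^1_a}^2\bigr)^{\theta}$ (the embedding $\ell^2\hookrightarrow\ell^{2\theta}$ goes the wrong way, and the sum can be infinite). For $\theta=1$ the sup disappears entirely. Your parenthetical ``or rather the relevant $\ell^p$-type sum of the $c_N$'' is precisely where the argument breaks: there is no $\ell^p$ with $p<2$ available to you here. In short, at the squared level you can extract at most two factors per term, and Littlewood--Paley orthogonality only tolerates an $\ell^2$-sum, so you cannot allocate $8/5$ of those two slots to $\sup_N c_N$ and $2/5$ to $\|f\|_{\dot H^1_a}$.

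The paper sidesteps this by working at the tenth power rather than the second. After the square function step one has $\|e^{-it\ld}f\|_{L^{10}}^{10}\lesssim \int(\sum_N|F_N|^2)^5$, which expands (after ordering $N_1\le\cdots\le N_5$) into sums of products of ten factors. Eight of these factors are placed in $L^{10}_{t,x}$ and absorbed by $\sup_N c_N^8$; the remaining two (one copy each of $F_{N_1}$ and $F_{N_5}$) are placed in slightly perturbed spaces $L^{10}_tL^{10/(1\mp\eps)}_x$, then converted via Bernstein into $\dot H^1_a$-level Strichartz norms with an explicit $(N_1/N_5)^{3\eps/10}$ gain; summing over the three middle frequencies produces only a logarithm, which the geometric gain absorbs via Schur. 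This way the final sum is genuinely an $\ell^2$-sum of $\|f_{N_1}\|_{\dot H^1_a}\|f_{N_5}\|_{\dot H^1_a}$ and is controlled by $\|f\|_{\dot H^1_a}^2$, yielding exactly $\|f\|_{\dot H^1_a}^2\sup_N c_N^8$ and hence the stated $\tfrac15$, $\tfrac45$ exponents. To repair your argument, you should replace the bilinear expansion of $\|F\|^2_{L^{10}}$ with this five-fold expansion of $\|F\|^{10}_{L^{10}}$; the Bernstein and Strichartz ingredients you identified (and the reason for the restriction $a>-\tfrac14+\tfrac1{25}$, namely $\sigma<\tfrac3{10}$) are then exactly what is needed.
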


\begin{proof}
As $a> -\frac14+\frac1{25}$, we have $\sigma<\frac3{10}$ and so we may choose $0<\eps\leq 1$ so that $\sigma < \frac{3}{10}(1-\eps)$.  Using the square function estimate Lemma~\ref{T:sq}, Lemma~\ref{L:Bernie}, and Strichartz, we obtain
\begin{align*}
\|e^{-it\ld} f\|_{L^{10}_{t,x}}^{10} \lesssim& \iint_{\R\times\R^3} \Bigl(\sum_{N\in 2^{\Z}}|e^{-it\ld} f_N|^2 \Bigr)^5 \,dx \,dt\\
\lesssim& \sum_{N_1\le \cdots\le N_5}\iint_{\R\times\R^3} |e^{-it\ld} f_{N_1}|^2 \cdots |e^{-it\ld} f_{N_5}|^2 \,dx\,dt\\
\lesssim& \sum_{N_1\le\cdots\le N_5} \|e^{-it\ld} f_{N_1}\|_{L^{10}_t L^{\frac{10}{1-\eps}}_x}
	\|e^{-it\ld} f_{N_1}\|_{L^{10}_{t,x}} \prod_{j=2}^4\|e^{-it\ld} f_{N_j}\|_{L^{10}_{t,x}}^2 \\
&\qquad\qquad \qquad\cdot\|e^{-it\ld} f_{N_5}\|_{L^{10}_{t,x}} \|e^{-it\ld} f_{N_5}\|_{L^{10}_t L^{\frac{10}{1+\eps}}_x}\\
\lesssim& \sup_{N\in 2^{\Z}}\|e^{-it\ld} f_N\|_{L^{10}_{t,x}}^8 \sum_{N_1\le N_5} \bigr[1+\log\bigl(\tfrac {N_5}{N_1}\bigr) \bigr]^3
	N_1^{1+\frac{3\eps}{10}}\| e^{-it\ld} f_{N_1}\|_{L^{10}_t L^{\frac{30}{13}}_x}\\
&\qquad\qquad\qquad\cdot N_5^{1-\frac{3\eps}{10}} \|e^{-it\ld} f_{N_5}\|_{L^{10}_t L^{\frac{30}{13}}_x} \\
\lesssim& \sup_{N\in 2^{\Z}}\|e^{-it\ld} f_N\|_{L^{10}_{t,x}}^8
\sum_{N_1\le N_5} \bigr[1+\log\bigl(\tfrac {N_5}{N_1}\bigr)\bigr]^3
\bigl(\tfrac{N_1}{N_5}\bigr)^{\frac{3\eps}{10}}
    \|f_{N_1}\|_{\dot H^1_x} \|f_{N_5}\|_{\dot H^1_x}\\
	\lesssim& \sup_{N\in 2^{\Z}}\|e^{-it\ld} f_N\|_{L^{10}_{t,x}}^8\|f\|_{\dot H^1_x}^2,
\end{align*}
where all space-time norms are over $\R\times\R^3$.  
\end{proof}

\begin{proposition}[Inverse Strichartz inequality]\label{P:inverse Strichartz}
Assume $a> -\frac14+\frac1{25}$. Let $\{f_n\}\subset \dot H^1_a(\R^3)$ be such that
\begin{align*}
\lim_{n\to \infty}\|f_n\|_{\dot H^1_a(\R^3)}=A < \infty \quad{and}~~
\lim_{n\to\infty}\|e^{-it\ld} f_n\|_{L^{10}_{t,x}}=\eps >0.
\end{align*}
Then there exist a subsequence in $n$, $\phi\in \dot H^1(\R^3)$, $\{N_n\}\subset 2^{\Z}$, and $\{(t_n, x_n)\}\subset
\R\times\R^3$  such that
\begin{align}\label{linearweak}
&g_n(x):=N_n^{-\frac12}\big(e^{-it_n\ld}f_n\big)\big(x_n+\tfrac{x}{N_n}\big)\rightharpoonup~\phi(x) \quad \text{weakly in}~~\dot H^1(\R^3),\\\label{dech}
&\lim_{n\to\infty}\Bigl\{ \|f_n\|_{\dot H^1_a(\R^3)}^2-\|f_n-\phi_n\|_{\dot H^1_a(\R^3)}^2\Bigr\} \gtrsim_{\eps,A} 1,\\
&\lim_{n\to\infty}\Bigl\{ \|f_n\|_{L^6_x}^6-\|f_n-\phi_n\|_{L^6_x}^6-\|\phi_n\|_{L^6_x}^6\Bigr\} =0,\label{dect}
\end{align}
where
\begin{equation*}
\phi_n(x)=N_n^\frac12e^{it_n\ld}\big[\phi\big(N_n(x-x_n)\big)\big]=N_n^\frac12\big(e^{iN_n^2t_n\ld^n}\phi\big)\big(N_n(x-x_n)\big),
\end{equation*}
with $\ld^n$ as in Definition~\ref{D:Ln} with $y_n=N_nx_n$.  Moreover, we may assume that either $N_n^2t_n\to \pm \infty$ or $t_n\equiv 0$ and that either $N_n|x_n|\to \infty$ or $x_n\equiv 0$.
\end{proposition}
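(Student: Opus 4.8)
\emph{Strategy.} The plan is to adapt to $\ld$ the standard inverse-Strichartz argument: a refinement of Strichartz isolates a single frequency block carrying a definite fraction of the $L^{10}_{t,x}$-mass, Bernstein upgrades this to a pointwise lower bound, and after a space-translation and rescaling we extract a non-zero weak limit. The one genuinely new point is that the translation and rescaling conjugate $\ld$ into the operators $\ld^n$ of Definition~\ref{D:Ln}, so that passing to limits requires the convergence statements of Lemma~\ref{L:convg op}.

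\emph{Step 1: the frequency bubble.} By Lemma~\ref{lm:refs} and the two hypotheses, for all large $n$ there is a dyadic $N_n$ with $\|e^{-it\ld}P^a_{N_n}f_n\|_{L^{10}_{t,x}(\R\times\R^3)}\gtrsim_{\eps,A}1$; pass to a subsequence realizing this. Interpolating $L^{10}_{t,x}$ between $L^\infty_{t,x}$ and the admissible space $L^{10/3}_{t,x}$, and bounding the latter via Strichartz and Lemma~\ref{L:Bernie} by $\|e^{-it\ld}P^a_{N_n}f_n\|_{L^{10/3}_{t,x}}\lesssim\|P^a_{N_n}f_n\|_{L^2_x}\lesssim N_n^{-1}A$, we obtain $\|e^{-it\ld}P^a_{N_n}f_n\|_{L^\infty_{t,x}}\gtrsim_{\eps,A}N_n^{1/2}$. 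Fix $(t_n,x_n)$ with $\bigl|[e^{-it_n\ld}P^a_{N_n}f_n](x_n)\bigr|\gtrsim_{\eps,A}N_n^{1/2}$.

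\emph{Step 2: rescaling and the weak limit.} Set $y_n:=N_nx_n$ and $g_n(x):=N_n^{-1/2}(e^{-it_n\ld}f_n)(x_n+x/N_n)$; writing $U_nh(x):=N_n^{1/2}h(N_nx-y_n)$ one has the exact conjugations $U_n^{-1}\ld U_n=N_n^2\ld^n$ and, since $P^a_{N_n}=\psi_1(\sqrt\ld/N_n)$, also $U_n^{-1}P^a_{N_n}U_n=\psi_1(\sqrt{\ld^n})=:\Psi(\sqrt{\ld^n})$. Consequently $\|g_n\|_{\dot H^1}\sim\|f_n\|_{\dot H^1_a}\to A$ by \eqref{E:equiv I}, while $[\Psi(\sqrt{\ld^n})g_n](0)=N_n^{-1/2}[e^{-it_n\ld}P^a_{N_n}f_n](x_n)$, so $\bigl|[\Psi(\sqrt{\ld^n})g_n](0)\bigr|\gtrsim_{\eps,A}1$. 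Pass to a further subsequence so that $g_n\rightharpoonup\phi$ weakly in $\dot H^1(\R^3)$, $y_n\to y_\infty\in\R^3\cup\{\infty\}$ and $N_n^2t_n\to\tau\in[-\infty,\infty]$. With $\phi_n$ as in the statement one computes $U_n^{-1}\phi_n=e^{iN_n^2t_n\ld^n}\phi$ and $U_n^{-1}f_n=e^{iN_n^2t_n\ld^n}g_n$, hence $U_n^{-1}(f_n-\phi_n)=e^{iN_n^2t_n\ld^n}(g_n-\phi)\rightharpoonup0$ in $\dot H^1$.

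\emph{Step 3: non-triviality of $\phi$ and \eqref{dech} --- the crux.} We must upgrade $\bigl|[\Psi(\sqrt{\ld^n})g_n](0)\bigr|\gtrsim_{\eps,A}1$ to $\|\phi\|_{\dot H^1}\gtrsim_{\eps,A}1$. Interpreting the pointwise value as a pairing, $[\Psi(\sqrt{\ld^n})g_n](0)=\langle g_n,\Psi(\sqrt{\ld^n})\delta_0\rangle$; when $y_\infty\in(\R^3\setminus\{0\})\cup\{\infty\}$ the heat-kernel bounds (Theorem~\ref{T:heat}) and the argument underlying \eqref{convg op3} give $\Psi(\sqrt{\ld^n})\delta_0\to\Psi(\sqrt{\ld^\infty})\delta_0$ in $\dot H^{-1}$, with uniformly bounded norm, whence $\bigl|\langle\phi,\Psi(\sqrt{\ld^\infty})\delta_0\rangle\bigr|\gtrsim_{\eps,A}1$ and thus $\|\phi\|_{\dot H^1}\gtrsim_{\eps,A}1$. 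The borderline case $y_\infty=0$ (equivalently, after the normalization in Step 4, $x_n\equiv0$ and $\ld^n\equiv\ld$), in which $\Psi(\sqrt\ld)\delta_0$ is not a legitimate element of $\dot H^{-1}$ when $\sigma>0$ (cf.\ the remark after \eqref{convg op3}), must be treated differently: one uses Bernstein bounds for $\ld^n$, uniform in $n$, to propagate the lower bound at the origin to a lower bound on a fixed ball, then passes to the $L^2_{\mathrm{loc}}$-limit using the local compactness of $\dot H^1(\R^3)$ and the strong convergence $\Psi(\sqrt{\ld^n})\chi\to\Psi(\sqrt{\ld^\infty})\chi$ on $L^2$ (a consequence of \eqref{convg op4}). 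This transfer of the Strichartz information to the weak limit across the $\ld$-adapted Littlewood--Paley decomposition --- and in particular the scenario where the bubble sits on the singularity of the potential --- is the step I expect to be the main obstacle. Granting the lower bound on $\|\phi\|_{\dot H^1}$, estimate \eqref{dech} follows by unwinding the rescalings: $\|f_n\|_{\dot H^1_a}^2-\|f_n-\phi_n\|_{\dot H^1_a}^2=\|\sqrt{\ld^n}g_n\|_{L^2}^2-\|\sqrt{\ld^n}(g_n-\phi)\|_{L^2}^2=2\Re\langle g_n,\ld^n\phi\rangle_{L^2}-\langle\ld^n\phi,\phi\rangle_{L^2}$, and by \eqref{convg op1} together with $g_n\rightharpoonup\phi$ this tends to $\|\sqrt{\ld^\infty}\phi\|_{L^2}^2\sim\|\phi\|_{\dot H^1}^2\gtrsim_{\eps,A}1$.

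\emph{Step 4: \eqref{dect} and the normal form.} Unwinding the rescalings again reduces \eqref{dect} to $\|e^{iN_n^2t_n\ld^n}g_n\|_{L^6_x}^6-\|e^{iN_n^2t_n\ld^n}(g_n-\phi)\|_{L^6_x}^6-\|e^{iN_n^2t_n\ld^n}\phi\|_{L^6_x}^6\to0$. If $\tau$ is finite, replace $g_n$ by $e^{iN_n^2t_n\ld^n}g_n$ (which converges weakly to $e^{i\tau\ld^\infty}\phi$ by \eqref{convg op2}) and $\phi$ by $e^{i\tau\ld^\infty}\phi$, and set $t_n\equiv0$; the expression becomes $\|g_n\|_{L^6_x}^6-\|g_n-\phi\|_{L^6_x}^6-\|\phi\|_{L^6_x}^6$, which tends to $0$ by the Brezis--Lieb Lemma~\ref{reflemms}, since $g_n\to\phi$ a.e.\ along a subsequence by Rellich. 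If instead $N_n^2t_n\to\pm\infty$, then $\|e^{iN_n^2t_n\ld^n}\phi\|_{L^6_x}\to0$ by Corollary~\ref{C:L6}, and the remaining difference is $O\bigl(\|e^{iN_n^2t_n\ld^n}\phi\|_{L^6_x}\bigr)\to0$ via $\bigl||a|^6-|b|^6\bigr|\lesssim|a-b|(|a|+|b|)^5$ and the uniform embedding $\dot H^1\hookrightarrow L^6$. Finally, when $|y_n|$ stays bounded we reduce to $x_n\equiv0$ by translating the profile by $y_\infty$; using continuity of translations on $\dot H^1$ and $\ld^n\to\ld^\infty$ one checks that this alters $\phi_n$ by $o(1)$ in $\dot H^1_a$, and since each such normalization changes $\|f_n-\phi_n\|_{\dot H^1_a}$ only by $o(1)$, the decouplings \eqref{dech}--\eqref{dect} are preserved and the asserted normal form is reached.
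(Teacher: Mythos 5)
Your Steps 1--2 and 4 follow the paper's route closely (refined Strichartz, bubble selection, rescaling, Brezis--Lieb for the potential-energy decoupling when $t_n\equiv 0$, and Corollary~\ref{C:L6} when $N_n^2 t_n\to\pm\infty$). The genuine gap is exactly where you anticipated it: the treatment of the borderline case $y_\infty=0$ in Step~3. The fix you sketch does not work, and the correct move is to make that case impossible rather than to handle it directly.

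Concretely: when $\sigma>0$, an $\ld$-adapted Littlewood--Paley piece carries a singularity $\sim |x|^{-\sigma}$ at the origin, so the essential supremum of $e^{-it\ld}P^a_{N_n}f_n$ over \emph{all} of $\R^3$ is typically attained arbitrarily close to $x=0$. Thus your interpolation $\|F\|_{10}\le\|F\|_{10/3}^{1/3}\|F\|_\infty^{2/3}$ produces points $(t_n,x_n)$ with $N_n|x_n|\to 0$ (or $x_n\equiv 0$) for entirely ordinary sequences $f_n$. In that regime $\ld^n\to\ld$ and, as the paper itself notes after \eqref{convg op3}, $e^{-\ld}\delta_0\equiv\infty$ when $\sigma>0$; there is no element of $\dot H^{-1}$ to pair with. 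Your proposed Bernstein repair also breaks down: Lemma~\ref{L:Bernie} is valid only for $p\le q<d/\sigma$, which excludes $q=\infty$ when $a<0$, and the relevant functions are not $W^{1,\infty}$ near the potential singularity (the ground state $W_a$ already behaves like $|x|^{-\sigma}$ there), so a pointwise lower bound at $x=0$ does not propagate to a fixed ball with $n$-uniform constants.

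The paper's mechanism is to pass from $P^a_{N_n}$ to the heat-kernel projection $\tilde P^a_{N_n}=e^{-\ld/N_n^2}-e^{-4\ld/N_n^2}$ via the Mikhlin multiplier theorem (so that the explicit kernel bounds of Theorem~\ref{T:heat} are available), and then to prove the key quantitative smallness
\begin{equation*}
\|\tilde P_{N_n}^a f\|_{L_x^{10}(\{|x|\leq\alpha N_n^{-1}\})}\lesssim \alpha^{\frac3{10}-\sigma}\|f\|_{L_x^{10}},
\end{equation*}
which is non-trivial precisely because $\sigma<\tfrac{3}{10}$ (i.e.\ $a>-\tfrac14+\tfrac1{25}$). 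This forces the bulk of the $L^{10}_{t,x}$ mass of $e^{-it\ld}\tilde P^a_{N_n}f_n$ to live on $\{|x|\ge\eta(\eps/A)^c N_n^{-1}\}$, and the H\"older interpolation with $L^{10/3}_{t,x}$ is then run on that region, producing a concentration point with $N_n|x_n|\gtrsim(\eps/A)^c$ so that $y_\infty\ne 0$ automatically. With that in hand the pairing against $\tilde P_1^\infty\delta_0$ is legitimate, $\|\tilde P_1^\infty\delta_0\|_{L^{6/5}_x}\sim[1+(\eps/A)^{-c}]^\sigma$, and \eqref{dech} follows. Until you add an analogue of this origin-exclusion estimate, your Step~3 does not close.
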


\begin{proof}
By Lemma \ref{lm:refs}, for $n$ sufficiently large there exists $N_n\in 2^{\Z}$ such that
\begin{align}\label{equ:jiz}
\|e^{-it\ld} P_{N_n}^a f_n\|_{L^{10}_{t,x}}\gtrsim \eps^{\frac54}A^{-\frac 14}.
\end{align}
By the Mikhlin multiplier theorem for $\ld$, there is an $L_x^{10}$-bounded multiplier $m$ so that $P_N^a=m(\ld)\tilde P_N^a$.  Thus, \eqref{equ:jiz} implies
\begin{align}\label{equ:jiz'}
\|e^{-it\ld} \tilde P_{N_n}^a f_n\|_{L^{10}_{t,x}}\gtrsim \eps^{\frac54}A^{-\frac 14}.
\end{align}

Using the heat kernel bounds provided by Theorem~\ref{T:heat}, it is easy to check that
$$
\|\tilde P_{N_n}^a f\|_{L_x^{10}(\{|x|\leq\alpha N_n^{-1}\})}\lesssim \alpha^{\frac3{10}-\sigma}\|f\|_{L_x^{10}}.
$$
Therefore, \eqref{equ:jiz'} guarantees the existence of an $f_n$-independent $\eta>0$ so that 
\begin{align}\label{equ:jiz''}
\|e^{-it\ld} \tilde P_{N_n}^a f_n\|_{L^{10}_{t,x}(\R\times\{|x|\geq\eta (\eps/A)^c N_n^{-1}\})}\gtrsim \eps^{\frac54}A^{-\frac 14},
\end{align}
where $c=\frac5{2(3-10\sigma)}$.

On the other hand, by the Strichartz and Bernstein inequalities
\begin{align*}
\|e^{-it\ld} \tilde P_{N_n}^a f_n\|_{L_{t,x}^{\frac{10}3}(\R\times\R^3)}\lesssim \| \tilde P_{N_n}^a f_n\|_{L_x^2(\R^3)} \lesssim N_n^{-1} A.
\end{align*}
This together with \eqref{equ:jiz''} and H\"older's inequality yields
\begin{align*}
A^{-\frac 14}\eps^{\frac 54}&\lesssim \|e^{-it\ld} \tilde P_{N_n}^a f_n\|_{L^{10}_{t,x}(\R\times\{|x|\geq\eta (\eps/A)^c N_n^{-1}\})}\\
&\lesssim  \|e^{-it\ld} \tilde P_{N_n}^af_n\|_{L_{t,x}^{\frac{10}3}(\R\times\R^3)}^{\frac 13}
	\|e^{-it\ld} \tilde P_{N_n}^a f_n\|_{L_{t,x}^{\infty} (\R\times\{|x|\geq\eta (\eps/A)^c N_n^{-1}\})}^{\frac 23} \\
&\lesssim  N_n^{-\frac 13}A^{\frac 13}\|e^{-it\ld} \tilde P_{N_n}^a f_n\|_{L_{t,x}^{\infty}(\R\times\{|x|\geq\eta (\eps/A)^c N_n^{-1}\})}^{\frac 23}.
\end{align*}
Therefore, there exist $\tau_n\in\R$ and $x_n\in \R^3$ with $N_n |x_n|\gtrsim (\eps/A)^c$ such that
\begin{align}\label{cncen}
N_n^{-\frac12}\Bigl|( \tilde P_{N_n}^a e^{-i\tau_n\ld} f_n)(x_n)\Bigr|\gtrsim \eps (\tfrac{\eps}A)^{\frac 78}.
\end{align}
Passing to a subsequence, we may assume $N_n^2\tau_n\to \tau_\infty\in[-\infty,\infty]$.  When $\tau_\infty$ is finite, we set $t_n:= 0$; otherwise, we define $t_n:=\tau_n$.

Having chosen the parameters $t_n$, $x_n$, and $N_n$, we now proceed to the construction of the profile $\phi$.  The inequality \eqref{cncen} will underlie the demonstration that $\phi$ carries nontrivial energy.

With $g_n$ defined as in \eqref{linearweak}, 
\begin{align*}
\|g_n\|_{\dot H^1_x}=\|\nabla e^{-it_n\ld}f_n\|_{L^2_x}\sim \|\sqrt{\ld} e^{-it_n\ld}f_n\|_{L^2_x} =\|f_n\|_{\dot H^1_a}\lesssim A,
\end{align*}
and so, passing to a subsequence, there exists $\phi$ so that $g_n\rightharpoonup \phi$ weakly in $\dot H^1_x$.  This proves \eqref{linearweak}.

Next, we turn to proving \eqref{dech}.  Changing variables, then using Lemma~\ref{L:convg op} and the fact that $g_n\rightharpoonup \phi$ weakly in $\dot H^1_x$, we get
\begin{align*}
\|f_n\|_{\dot H^1_a}^2-\|f_n-\phi_n\|_{\dot H^1_a}^2
=2\Re \langle g_n, \ld^n \phi \rangle - \langle \phi,  \ld^n \phi \rangle \to \langle \phi,  \ld^\infty \phi \rangle \quad \text{as } n\to \infty,
\end{align*}
where $\ld^\infty$ is as in Definition~\ref{D:Ln}. Thus, it suffices to prove a lower bound on the $\dot H^1_x$-norm of $\phi$.

Setting
$$
h_n(x) = \begin{cases}
N_n^{-3}\bigl[ e^{-i \tau_n\ld} \tilde P_{N_n}^a\delta_{x_n}\bigr]  (x_n+ \tfrac{x}{N_n})= \bigl[ e^{-i N_n^2\tau_n\ld^n} \tilde P_1^n \delta_{0}\bigr]  (x) & \text{if }\tau_\infty\in\R \\
N_n^{-3}\bigl[ \tilde P_{N_n}^a \delta_{x_n}\bigr]  (x_n+ \tfrac{x}{N_n})= \bigl[\tilde P_1^n \delta_{0}\bigr]  (x) & \text{if }\tau_\infty=\pm\infty
\end{cases}
$$
with $\tilde P_1^n:= e^{-\ld^n} - e^{-4\ld^n}$, and performing a change of variables, \eqref{cncen} becomes
$$
\big|\langle h_n, g_n\rangle\big| \gtrsim \eps (\tfrac{\eps}A)^{\frac 78}.
$$
By construction, $g_n\rightharpoonup\phi$ in $\dot H^1_x$.   We set $\tilde P_1^\infty:= e^{-\ld^\infty} - e^{-4\ld^\infty}$.  By Lemma~\ref{L:convg op}, $h_n \to \tilde P_1^\infty \delta_0$ in $\dot H^{-1}_x$ when $\tau_\infty=\pm\infty$ and $h_n \to e^{-i\tau_\infty\ld^\infty} \tilde P_1^\infty \delta_0$ in $\dot H^{-1}_x$ when $\tau_\infty$ is finite.  Thus when $\tau_\infty$ is finite, we deduce that
$$
\eps (\tfrac{\eps}A)^{\frac 78} \lesssim \big|\langle \tilde P_1^\infty \delta_0, e^{+i\tau_\infty\ld^\infty} \phi\rangle\big| \lesssim \| \phi\|_{\dot H^1_x} \| \tilde P_1^\infty\delta_0 \|_{L^{6/5}_x}.
$$
In view of Theorem~\ref{T:heat} and the fact that $N_n|x_n| \gtrsim (\eps/A)^c$, we have
$$
\|\tilde P_1^\infty\delta_0\|_{L^{6/5}_x}\sim \bigl[1+\bigl(\tfrac\eps{A}\bigr)^{-c}\bigr]^\sigma,
$$
which completes the proof of \eqref{dech} when $\tau_\infty$ is finite.  Trivial modifications of the last steps handle the case $\tau_\infty=\pm\infty$. 

We now turn to \eqref{dect}.  If $t_n\equiv 0$, then by Rellich--Kondrashov (which yields $g_n\to \phi$ a.e.) and Lemma~\ref{reflemms} we have
$$
\|g_n\|_{L^6_x}^6-\|g_n-\phi\|_{L^6_x}^6-\|\phi\|_{L^6_x}^6 \to 0 \qtq{as} n\to \infty.
$$
A change of variables then yields \eqref{dect} in this case.  If $t_n=\tau_n$, then it suffices to observe that $\phi_n\to 0$ in $L^6_x$ by Corollary~\ref{C:L6}.

Finally, passing to another subsequence if necessary, we may assume that either $N_n |x_n|\to \infty$ or $N_nx_n\to y_\infty\in \R^3$.  In the latter case, we may take $x_n\equiv 0$ by replacing the profile $\phi$ found previously by $\phi(x-y_\infty)$.
\end{proof}

With Proposition~\ref{P:inverse Strichartz} in place, the greater part of the proof of Theorem~\ref{T:LPD} amounts to careful book-keeping.  We omit the details; they can be found, for example, in \cite{KVZ12,Oberbook}.  However, as one sees there, two extra facts about the free propagator are needed to verify \eqref{E:LP5}.  For the problem at hand, these two additional inputs are provided by Lemmas~\ref{L:compact} and~\ref{L:converg} below; they are the analogues of Lemmas~5.5 and~5.4 in \cite{KVZ12} and play the same roles as these earlier results in the proof of \eqref{E:LP5}.

\begin{lemma}[Weak convergence]\label{L:compact}
Let $f_n\in \dot H^1(\R^3)$ be such that $f_n\rightharpoonup 0$ weakly in $\dot H^1(\R^3)$ and let $t_n\to t_\infty\in \R$. Then for any $y_n\in \R^3$,
\begin{align*}
e^{-it_n\ld^n} f_n\rightharpoonup 0 \quad\text{weakly in}~~ \dot H^1(\R^3),
\end{align*}
where $\ld^n$ is as in Definition~\ref{D:Ln}.
\end{lemma}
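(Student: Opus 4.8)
The plan is to dualize and read the result off from the operator-convergence statement \eqref{convg op2} in Lemma~\ref{L:convg op}.

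First I would record that, by the equivalence of Sobolev spaces (Theorem~\ref{pro:equivsobolev} applied to $\ld^n$ with $d=3$, $s=1$, $p=2$, whose constants are independent of the center $y_n$), the propagators $e^{\mp it\ld^n}$ are bounded on $\dot H^1(\R^3)$ uniformly in $t$ and $n$: indeed $\|\nabla e^{-it\ld^n} f\|_{L^2} \sim \|\sqrt{\ld^n}\,e^{-it\ld^n} f\|_{L^2} = \|\sqrt{\ld^n} f\|_{L^2}\sim\|\nabla f\|_{L^2}$, and by duality the same holds on $\dot H^{-1}(\R^3)$. In particular $g_n:=e^{-it_n\ld^n}f_n$ lies in $\dot H^1(\R^3)$ and is bounded there, so the asserted weak convergence is meaningful. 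Moreover, since $\ld^n$ is self-adjoint, $e^{-it_n\ld^n}$ is $L^2$-unitary, and a density argument then yields the adjoint identity $\langle g_n,\varphi\rangle = \langle f_n, e^{it_n\ld^n}\varphi\rangle$ for every $\varphi\in\dot H^{-1}(\R^3)$, where $\langle\cdot,\cdot\rangle$ denotes the $\dot H^1$--$\dot H^{-1}$ pairing.

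Because the sequence $\{y_n\}$ is arbitrary (it need not converge even in $\R^3\cup\{\infty\}$), I would argue via the subsequence principle: it suffices to show that every subsequence of $\{g_n\}$ has a further subsequence converging weakly to $0$ in $\dot H^1(\R^3)$. Along such a subsequence we may assume $y_n\to y_\infty\in\R^3\cup\{\infty\}$; together with the hypothesis $t_n\to t_\infty$ this puts us in the setting of Lemma~\ref{L:convg op}, so \eqref{convg op2} gives $e^{it_n\ld^n}\varphi\to e^{it_\infty\ld^\infty}\varphi$ strongly in $\dot H^{-1}(\R^3)$, with $\ld^\infty$ as in Definition~\ref{D:Ln}. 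Splitting
$$
\langle f_n, e^{it_n\ld^n}\varphi\rangle = \langle f_n,\, e^{it_n\ld^n}\varphi - e^{it_\infty\ld^\infty}\varphi\rangle + \langle f_n,\, e^{it_\infty\ld^\infty}\varphi\rangle,
$$
the first term is $O\big(\|f_n\|_{\dot H^1}\,\|e^{it_n\ld^n}\varphi - e^{it_\infty\ld^\infty}\varphi\|_{\dot H^{-1}}\big)\to 0$ since weakly convergent sequences are bounded, while the second tends to $0$ because $f_n\rightharpoonup 0$ in $\dot H^1(\R^3)$ and $e^{it_\infty\ld^\infty}\varphi\in\dot H^{-1}(\R^3)$. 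Hence $\langle g_n,\varphi\rangle\to 0$ for every $\varphi\in\dot H^{-1}(\R^3)$, i.e.\ $g_n\rightharpoonup 0$ in $\dot H^1(\R^3)$ along the subsequence; the subsequence principle then yields the claim for the full sequence.

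There is no serious obstacle here: once \eqref{convg op2} is in hand the argument is essentially bookkeeping. The two points needing a little attention are the reduction to a convergent sequence of centers $y_n$ (so that Lemma~\ref{L:convg op} applies) and the verification that $e^{\mp it\ld^n}$ acts boundedly on $\dot H^{\pm1}(\R^3)$ uniformly in $n$ — this is where Theorem~\ref{pro:equivsobolev} enters, and it is what legitimizes passing the propagator onto the test function.
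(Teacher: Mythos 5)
Your proof is correct and follows essentially the same route as the paper: dualize the pairing by moving the propagator onto the test function (using $L^2$-unitarity and the uniform $\dot H^{\pm1}$ boundedness from Theorem~\ref{pro:equivsobolev}), pass to a subsequence so that $y_n\to y_\infty\in\R^3\cup\{\infty\}$, and then use the operator convergence of Lemma~\ref{L:convg op} together with $f_n\rightharpoonup 0$. The only cosmetic difference is that the paper first uses a spectral-theorem estimate $\|[e^{-it_n\ld^n}-e^{-it_\infty\ld^n}]f_n\|_{L^2}\lesssim|t_n-t_\infty|^{1/2}\|f_n\|_{\dot H^1_a}$ to reduce to the fixed time $t_\infty$ before applying Lemma~\ref{L:convg op}, whereas you invoke \eqref{convg op2} directly, which already absorbs the $t_n\to t_\infty$ convergence.
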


\begin{proof}
Without loss of generality, we may assume that $y_n\to y_\infty\in \R^3\cup \{\infty\}$.  Let $\ld^\infty$ be as in Definition~\ref{D:Ln}.
For any $\psi\in C_c^{\infty}(\R^3\setminus\{y_\infty\})$, we have
\begin{align*}
\bigl|\bigl\langle [e^{-it_n\ld^n}-e^{-it_\infty \ld^n}]f_n, \psi\bigr\rangle_{\dot H^1_x}\bigr|
&\lesssim \big\|[e^{-it_n\ld^n}-e^{-it_\infty \ld^n}]f_n\big\|_{L^2_x} \|\Delta\psi\|_{L^2_x}\\
&\lesssim |t_n-t_\infty|^{1/2} \|\sqrt{\ld^n}f_n\|_{L^2_x}\|\Delta\psi\|_{L^2_x},
\end{align*}
which converges to zero as $n\to \infty$.  To see the last inequality above, we used the spectral theorem together with the elementary inequality
$$
|e^{-it_n\lambda}-e^{-it_\infty\lambda}|\lesssim |t_n-t_\infty|^{1/2}\lambda^{1/2} \qtq{for}\lambda\geq 0.
$$

Thus, to prove the lemma it suffices to prove that $e^{-it_\infty \ld^n} f_n\rightharpoonup 0$ in $\dot H^1_x$. For $\psi\in C_c^\infty(\R^3\setminus\{y_\infty\})$,
\begin{align*}
\langle e^{-it_\infty \ld^n} f_n, \psi \rangle_{\dot H^1_x}
=\big\langle f_n, [e^{it_\infty \ld^n} -e^{it_\infty \ld^\infty}](-\Delta\psi) \big\rangle_{L^2_x} + \big\langle f_n,e^{it_\infty \ld^\infty}(-\Delta\psi) \big\rangle_{L^2_x}.
\end{align*} 
The first inner product on the right-hand side above converges to zero by Lemma~\ref{L:convg op}.  As $f_n\rightharpoonup 0$ in $\dot H^1(\R^3)$ by assumption and $e^{it_\infty \ld^\infty}\Delta\psi \in \dot H^{-1}(\R^3)$, the second inner product also converges to zero.  This completes the proof of the lemma.
\end{proof}

\begin{lemma}[Weak convergence]\label{L:converg}
Let $f\in \dot H^1(\R^3)$ and let $\{(t_n,x_n)\}_{n\geq 1}\subset\R\times\R^3$ and $\{y_n\}\subset \R^3$.  Then
\begin{align}\label{lc}
\big[e^{-it_n \ld^n}f\big](x+x_n) \rightharpoonup 0 \quad \text{weakly in $\dot H^1(\R^3)$ as $n\to \infty$,}
\end{align}
whenever $|t_n|\to \infty$ or $|x_n|\to \infty$.  Here $\ld^n$ is as in Definition~\ref{D:Ln}.
\end{lemma}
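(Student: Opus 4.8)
The plan is to separate the two mechanisms that can be responsible for the weak convergence: dispersion, when $|t_n|\to\infty$, and escape of mass to spatial infinity, when $|x_n|\to\infty$. Write $F_n:=\bigl[e^{-it_n\ld^n}f\bigr](\cdot+x_n)$. Since $e^{-it\ld^n}$ is unitary on $L^2$ and $\ld^n$ defines the space $\dot H^1(\R^3)$ with constants uniform in $n$ (Theorem~\ref{pro:equivsobolev}), the sequence $\{F_n\}$ is bounded in $\dot H^1$ by $\lesssim\|f\|_{\dot H^1}$; hence it suffices to show that every subsequence has a further subsequence along which $\langle F_n,\psi\rangle_{\dot H^1}\to0$ for all $\psi\in C_c^\infty(\R^3)$. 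Passing to such a subsequence, I would arrange $t_n\to t_\infty\in[-\infty,+\infty]$.

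If $t_\infty=\pm\infty$, I would apply Corollary~\ref{C:L6} with $f$ in place of $\psi$ and $-t_n$ in place of $t_n$ to get $\|e^{-it_n\ld^n}f\|_{L^6_x}\to0$. Integrating by parts and translating, $\langle F_n,\psi\rangle_{\dot H^1}=-\langle e^{-it_n\ld^n}f,(\Delta\psi)(\cdot-x_n)\rangle$, and since $\|(\Delta\psi)(\cdot-x_n)\|_{L^{6/5}}=\|\Delta\psi\|_{L^{6/5}}$ is independent of $n$, H\"older's inequality closes this case. The point I want to emphasize is that this argument makes no reference to the size of $x_n$, so the potentially troublesome overlap regime $|t_n|\to\infty$ \emph{and} $|x_n|\to\infty$ --- where there is no $L^1\to L^\infty$ dispersive bound available when $a<0$ --- is already covered; this is precisely why Corollary~\ref{C:L6}, which rests on Strichartz estimates and a Lipschitz-in-time argument rather than on dispersive estimates, is the right tool here, and it is what keeps the proof short.

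If instead $t_\infty\in\R$, then $\{t_n\}$ is bounded, so the hypothesis of the lemma forces $|x_n|\to\infty$. The key step here is the conjugation identity $\bigl[e^{-it\ld^n}f\bigr](\cdot+x_n)=e^{-it\ld^{\tilde n}}\bigl[f(\cdot+x_n)\bigr]$, where $\ld^{\tilde n}:=-\Delta+a|x+\tilde y_n|^{-2}$ with $\tilde y_n:=x_n+y_n$; this follows from the elementary fact that translation by $x_n$ conjugates $\ld^n$ to $\ld^{\tilde n}$. Since $f\in\dot H^1$ is fixed and $|x_n|\to\infty$, one has $f(\cdot+x_n)\rightharpoonup0$ weakly in $\dot H^1$ (test against $C_c^\infty$ and use that the test function's support recedes to infinity). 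Applying Lemma~\ref{L:compact} with the sequence $\tilde y_n$ and the times $t_n\to t_\infty\in\R$ then yields $F_n=e^{-it_n\ld^{\tilde n}}\bigl[f(\cdot+x_n)\bigr]\rightharpoonup0$ weakly in $\dot H^1$, which is even stronger than what is required.

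In sum, the proof is short once Corollary~\ref{C:L6} and Lemma~\ref{L:compact} are available; the hard part is really conceptual rather than technical, namely recognizing that these two results between them exhaust all admissible sequences once one organizes the subsequence extraction as above and identifies $|x_n|\to\infty$ with a translation of the operator family. The only remaining things to be careful about are the routine dualities ($\dot H^1$--$\dot H^{-1}$ and $L^6$--$L^{6/5}$) used to transfer weak convergence between the spaces, and the verification of the conjugation identity for the operators $\ld^n$.
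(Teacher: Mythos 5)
Your proof is correct, and while it handles the $|x_n|\to\infty$ (bounded $t_n$) case exactly as the paper does — conjugating by the spatial translation to fold $x_n$ into the potential's shift and invoking Lemma~\ref{L:compact} — the $|t_n|\to\infty$ case takes a genuinely different route. The paper proves this case directly: it sets $F_n(t):=\langle e^{-it\ld^n}f(\cdot+x_n),\psi\rangle_{\dot H^1}$, shows $\partial_t F_n$ is bounded uniformly in $n$, controls $\|F_n\|_{L^{10/3}_t([t_n,\infty))}$ by Strichartz plus \eqref{convg op4}, and concludes $F_n(t_n)\to0$ by the Lipschitz-versus-$L^p$ tension. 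You instead factor the argument through Corollary~\ref{C:L6}, which already packages a Lipschitz-in-time/Strichartz argument of the same flavour, then pass from $\|e^{-it_n\ld^n}f\|_{L^6_x}\to0$ to the desired $\dot H^1$-weak limit by the elementary duality $\langle F_n,\psi\rangle_{\dot H^1}=-\langle e^{-it_n\ld^n}f,(\Delta\psi)(\cdot-x_n)\rangle_{L^2}$ together with the translation invariance of $\|\Delta\psi\|_{L^{6/5}}$. Both arguments ultimately rest on the convergence statements of Lemma~\ref{L:convg op} and the Strichartz machinery; your version is shorter and has the small additional merit of making evident why the overlap regime $|t_n|\to\infty$ \emph{and} $|x_n|\to\infty$ causes no trouble (the translation by $x_n$ simply disappears in the $L^{6/5}$ norm of the test function), whereas the paper's direct computation gives somewhat more explicit quantitative control over how fast $F_n(t_n)\to0$. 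You might also note that Corollary~\ref{C:L6} is stated for arbitrary $\psi\in\dot H^1$, so no separate density reduction is needed when applying it to $f$. One minor point worth making explicit: the unitarity fact you use is that $e^{-it\ld^n}$ is unitary on $\dot H^1_a$ (for the operator norm $\|\sqrt{\ld^n}\cdot\|_{L^2}$), and passing to $\dot H^1$ requires the uniform-in-$n$ equivalence from Theorem~\ref{pro:equivsobolev}; you did gesture at this, and it is exactly as in the paper.
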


\begin{proof} Naturally, it suffices to prove \eqref{lc} along subsequences where $y_n\to y_\infty\in \R^3\cup\{\infty\}$.  Let $\ld^\infty$ be as in Definition~\ref{D:Ln}.

We first prove \eqref{lc} when $t_n\to \infty$; the proof when $t_n\to-\infty$ follows symmetrically.  Let $\psi\in C_c^\infty(\R^3\setminus\{y_\infty\})$ and let
$$
F_n(t):=\langle e^{-it \ld^n}f(x+x_n), \psi\rangle_{\dot H^1(\R^3)}.
$$
To establish \eqref{lc}, we need to show
\begin{align}\label{lc1}
F_n(t_n)\to 0 \quad\text{as}~~ n\to \infty.
\end{align}

A computation yields
\begin{align*}
|\partial_t F_n(t)|&= \bigl|\langle \ld^n e^{-it\ld^n}f(x+x_n), \Delta\psi\rangle_{L^2(\R^3)}\bigr| \lesssim \|f\|_{\dot H^1_x} \|\Delta\psi\|_{\dot H^1_x}\lesssim_{f,\psi}1.
\end{align*}
On the other hand,
\begin{align*}
\|F\|_{L_t^{\frac{10} 3}([t_n,\infty))}
&\lesssim \|e^{-it \ld^n}f\|_{L_{t,x}^{\frac{10}3}([t_n,\infty)\times\R^3)}\|\Delta \psi\|_{L_x^{\frac{10}7}(\R^3)}\\
&\lesssim_\psi \bigl\|[e^{-it\ld^n}- e^{-it\ld^\infty}]f\bigr\|_{L_{t,x}^{\frac{10}3}([t_n,\infty)\times\R^3)} +\|e^{-it\ld^\infty}f\bigr\|_{L_{t,x}^{\frac{10}3}([t_n,\infty)\times\R^3)}.
\end{align*}
The first term on the right-hand side above converges to zero by Lemma~\ref{L:convg op}.  The second term converges to zero by the Strichartz inequality combined with the monotone convergence theorem.  Putting everything together, we derive
\eqref{lc1} and so \eqref{lc} when $t_n\to \infty$.

Now assume $\{t_n\}_{n\geq 1}$ is bounded, but $|x_n|\to \infty$ as $n\to \infty$.  Without loss of generality, we may assume $t_n\to t_\infty\in \R$ as $n\to \infty$.  As
$$
\big[e^{-it_n \ld^n}f\big](x+x_n) = \Big[ e^{-it_n(-\Delta + \tfrac{a}{|\cdot+y_n+x_n|^{2}})}f(\cdot+x_n)\Big](x),
$$ 
the claim now follows from Lemma~\ref{L:compact}.
\end{proof}




\section{Embedding nonlinear profiles}\label{S:4}

In this section, we prove an analogue of Theorem~\ref{theorem} for initial data that lives far from the origin relative to its intrinsic length scale.  In this setting, one may imagine that the potential plays little role.  Indeed, we will use solutions to \eqref{nls}, whose existence is guaranteed by Theorem~\ref{T:gopher}, to approximate the solutions to \eqref{equ1.1} in this case.

To keep formulas within margins, in this section we will adopt the following abbreviation:
$$
\dot X^1(I):=L_t^{10}\dot H^{1,\frac{30}{13}}_x(I\times\R^3).
$$

\begin{theorem}[Embedding nonlinear profiles]\label{T:embed}
Fix $a>-\frac14+\frac1{25}$.  Let $\{\lambda_n\}\subset 2^{\mathbb Z}$ and $\{x_n\}\subset \R^3$ be such that $|x_n|/\lambda_n\to \infty$. Let $\{t_n\}\subset\R$ be
such that either $t_n\equiv0$ or $t_n\to \pm\infty$.  Let $\phi\in \dot H^1(\R^3)$ and define
\begin{align*}
\phi_n(x):=\lambda_n^{-\frac12}\big[e^{-it_n\ld}\phi\big]\bigl(\tfrac{x-x_n}{\lambda_n}\bigr).
\end{align*}
Then for $n$ sufficiently large there exists a global solution $v_n$ to \eqref{equ1.1}  with initial data $v_n(0)=\phi_n$ which satisfies
\begin{align*}
\|v_n\|_{L_{t,x}^{10}(\R\times\R^3)}\lesssim 1,
\end{align*}
with the implicit constant depending only on $\|\phi\|_{\dot H^1(\R^3)}$.  Furthermore, for every $\eps>0$ there exists $N_\eps\in \N$ and
$\psi_\eps\in C_c^{\infty}(\R\times\R^3)$ such that for all $n\geq N_\eps$ we have
\begin{align}\label{dense2}
\|v_n(t-\lambda_n^2 t_n,x+x_n)-\lambda_n^{-\frac12}\psi_\eps(\lambda_n^{-2}t,\lambda_n^{-1} x)\|_{\dot X^1(\R)}<\eps.
\end{align}
\end{theorem}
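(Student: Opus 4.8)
The plan is to exploit the hypothesis $|x_n|/\lambda_n\to\infty$ to argue that, after rescaling, the inverse-square potential is negligible, so that the nonlinear $\ld$-evolution of $\phi_n$ is well-approximated by the potential-free evolution of $\phi$ supplied by Theorem~\ref{T:gopher}. By the scaling symmetry of \eqref{equ1.1} it suffices to treat the case $\lambda_n\equiv 1$, so that $\phi_n(x)=[e^{-it_n\ld}\phi](x-x_n)$ with $|x_n|\to\infty$; equivalently, after the (non-symmetry!) translation by $x_n$, we must understand the evolution of $e^{-it_n\ld^n}\phi$ under $i\partial_t-\ld^n$, where $\ld^n=-\Delta+a|x+x_n|^{-2}$ as in Definition~\ref{D:Ln}, with $y_n=x_n\to\infty$ so $\ld^\infty=-\Delta$. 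The first step is therefore to build a good approximate solution to the translated equation.

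\textbf{Construction of the approximate solution.} Let $w$ be the global solution to the potential-free problem \eqref{nls} guaranteed by Theorem~\ref{T:gopher}, with data $w(0)=\phi$ if $t_n\equiv 0$, or (in the case $t_n\to\pm\infty$) with data prescribed at $t=\pm\infty$ via the wave operators for $e^{it\Delta}$, i.e.\ $w$ is the solution scattering to $e^{it\Delta}\phi$ as $t\to\pm\infty$; in either case $\|w\|_{L^{10}_{t,x}(\R\times\R^3)}\lesssim_{\|\phi\|_{\dot H^1}}1$. Now fix $\eps>0$ and, using that $C_c^\infty(\R\times\R^3)$ is dense in the relevant Strichartz space for $\nabla w$ (and in $L^{10}_{t,x}$), pick $\psi_\eps\in C_c^\infty(\R\times\R^3)$ with $\|w-\psi_\eps\|_{\dot X^1(\R)}<\eps$ (and control of $\|w-\psi_\eps\|_{L^{10}_{t,x}}$). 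Define the candidate approximate solution to the translated equation by
$$
\tilde v_n(t,x):=\bigl[e^{-i(t-t_n)\ld^n}e^{it_n\Delta}\phi\bigr](x)+\bigl(\text{Duhamel term built from }|w(t-t_n)|^4 w(t-t_n)\text{ with }e^{-i(\cdot)\ld^n}\bigr),
$$
or, more cleanly, simply transplant $w$: set $\tilde v_n(t,x):=w(t-t_n,x)$ and treat the error $e=(i\partial_t-\ld^n)\tilde v_n-|\tilde v_n|^4\tilde v_n=-\tfrac{a}{|x+x_n|^2}w(t-t_n,x)$. The error in the $\sqrt{\ld^n}$-weighted dual Strichartz norm $N^0(\R)$ is then governed by $\|\,|x+x_n|^{-2}w\|$-type quantities plus a commutator $[\sqrt{\ld^n},|x+x_n|^{-2}]$-type term; since $w$ (approximated by $\psi_\eps$) is supported in a fixed compact region while $|x_n|\to\infty$, the weight $|x+x_n|^{-2}$ is $O(|x_n|^{-2})$ on that region, so the error tends to $0$ as $n\to\infty$. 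One also checks the matching of initial data: $\tilde v_n(0)=w(-t_n)$, which when $t_n\equiv 0$ equals $\phi=e^{it_n\Delta}\phi$ on the nose, and when $t_n\to\pm\infty$ differs from $e^{it_n\Delta}\phi$ by $o(1)$ in $\dot H^1$ by the definition of $w$ via wave operators; moreover one must replace $e^{it_n\Delta}\phi$ by $e^{it_n\ld^n}\phi$ up to $o(1)$ errors, which is exactly \eqref{convg op4}/\eqref{convg op5} of Lemma~\ref{L:convg op} (recall $\ld^\infty=-\Delta$ here). Feeding this into the stability theorem Theorem~\ref{thm:stability} (whose hypotheses require only an $\dot H^1_a$ bound and an $L^{10}_{t,x}$ bound on the approximate solution, both of which $w$ enjoys uniformly) produces, for $n$ large, a global solution $v_n$ to the translated equation with $v_n(0)=e^{-it_n\ld^n}\phi$, i.e.\ after untranslating a global $v_n$ to \eqref{equ1.1} with $v_n(0)=\phi_n$, obeying $\|v_n\|_{L^{10}_{t,x}}\lesssim\|w\|_{L^{10}_{t,x}}+o(1)\lesssim 1$ and $\|\sqrt{\ld}v_n-\sqrt{\ld}\,\tilde v_n\|_{S^0}\to 0$.

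\textbf{The approximation statement \eqref{dense2}.} Having run stability with the \emph{rough} approximate solution $w$, I then upgrade: because $\|\sqrt{\ld^n}(v_n(\cdot+t_n)-w)\|_{S^0(\R)}\to 0$ and $\|w-\psi_\eps\|_{\dot X^1(\R)}<\eps$, the triangle inequality in $\dot X^1$ together with Theorem~\ref{pro:equivsobolev} (to pass between $\|\nabla\cdot\|$ and $\|\sqrt{\ld^n}\cdot\|$ in the relevant Lebesgue range $\tfrac65\le\tfrac{30}{13}\le\tfrac{30}{13}$, valid since $a>-\tfrac14+\tfrac1{25}$) gives, for $n$ large, $\|v_n(\cdot+t_n,\cdot)-\psi_\eps\|_{\dot X^1(\R)}<2\eps$. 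Undoing the $\lambda_n\equiv 1$ normalization by the scaling $v_n\mapsto\lambda_n^{-1/2}v_n(\lambda_n^{-2}t,\lambda_n^{-1}x)$, which is an isometry of $\dot X^1(\R)$, and recording the time shift as $t-\lambda_n^2 t_n$ and the space shift as $x+x_n$, yields precisely \eqref{dense2} with $3\eps$ in place of $\eps$ (relabel).

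\textbf{Main obstacle.} The routine parts are the density argument and the bookkeeping with symmetries; the genuine difficulty — and the place where the restriction $a>-\tfrac14+\tfrac1{25}$ is used — is controlling the error $e=-\tfrac{a}{|x+x_n|^2}w$ in the \emph{$\sqrt{\ld}$-differentiated} dual norm $\|\sqrt{\ld}\,e\|_{N^0(\R)}$ that Theorem~\ref{thm:stability} demands. One cannot simply move $\sqrt{\ld}$ onto $w$ and bound the weight in $L^\infty$, because $\sqrt{\ld}$ is nonlocal; instead one must either use the fractional product rule (Lemma~\ref{L:Leibnitz}) to distribute $\sqrt{\ld}$ between the smooth compactly supported $\psi_\eps$ and the weight $|x+x_n|^{-2}$ (which on $\supp\psi_\eps$ is a smooth, slowly varying, $O(|x_n|^{-2})$ function), or — cleaner — first replace $\sqrt{\ld}$ acting on the compactly-supported-away-from-origin function $w\approx\psi_\eps$ by $\sqrt{-\Delta}$ up to $o(1)$ via \eqref{convg op5}, handle $\nabla(|x+x_n|^{-2}\psi_\eps)$ by the ordinary product rule, and bound the resulting terms, each carrying a factor $|x_n|^{-1}$ or $|x_n|^{-2}$, in $L^2_t L^{6/5}_x$ using the compact support in time of $\psi_\eps$. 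Since $|x_n|\to\infty$, these go to zero, and the scheme closes.
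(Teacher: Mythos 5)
Your overall strategy (approximate by the potential\mbox{-}free solution from Theorem~\ref{T:gopher}, run Theorem~\ref{thm:stability}, then use density to get \eqref{dense2}) is the same as the paper's, and the scaling reduction and data\mbox{-}matching via Lemma~\ref{L:convg op} are fine. But there is a genuine gap at the heart of the argument: your approximate solution $\tilde v_n(t,x)=w(t-t_n,x)$ produces the error $e=-a|x+x_n|^{-2}w$, and this error is in general not even finite in the norm $\|\sqrt{\ld^n}\,e\|_{N^0(\R)}$ that Theorem~\ref{thm:stability} requires, let alone small. Indeed, by Theorem~\ref{pro:equivsobolev} this norm is comparable to $\|\,|x+x_n|^{-2}w\,\|_{L^2_t\dot H^{1,6/5}_x}$ (up to the $L^1_tL^2_x$ component), and if $w(t,\cdot)$ does not vanish at the singular point $x=-x_n$ then $\nabla\bigl(|x+x_n|^{-2}w\bigr)\sim|x+x_n|^{-3}$ there, which fails to lie in $L^{6/5}_x$ (or $L^2_x$) locally. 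Your proposed remedy --- replace $w$ by the compactly supported $\psi_\eps$ --- does not close this gap, because the difference $g=w-\psi_\eps$ is small only in Strichartz\mbox{-}type norms such as $\dot X^1$, and no Hardy\mbox{-}type inequality converts one derivative of $g$ into the two or three inverse powers of $|x+x_n|$ needed to control $\|\sqrt{\ld^n}(|x+x_n|^{-2}g)\|_{N^0}$; smallness of $g$ in $\dot X^1$ simply does not see the required vanishing of $g$ near a point. So the step ``the error tends to $0$ as $n\to\infty$'' is unjustified, and in fact the quantity being estimated can be infinite.

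The paper's proof is built precisely to avoid this: the approximate solution is $\chi_n w_n$ (suitably rescaled), where $\chi_n$ vanishes on $\{|x_n+\lambda_n x|\le|x_n|/4\}$, so the approximate solution is identically zero near the potential's singularity and the potential term is supported where the weight is $O\bigl((\lambda_n/|x_n|)^2\bigr)$. The price is commutator errors $\nabla\chi_n\cdot\nabla w_n$ and $\Delta\chi_n w_n$, which are controlled only because the data is first frequency\mbox{-}truncated to $\phi_{\le(|x_n|/\lambda_n)^\theta}$ so that persistence of regularity gives $\||\nabla|^s w_n\|_{\dot S^1}\lesssim(|x_n|/\lambda_n)^{s\theta}$; moreover, these error bounds grow linearly in $T$, which forces the paper to use $\chi_n w_n$ only on the window $|t|\le\lambda_n^2T$, glue in the linear $e^{-it\ld}$ flow outside (using the scattering of $w_\infty$ there), and take $n\to\infty$ before $T\to\infty$. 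None of these devices --- spatial cutoff, frequency truncation with persistence of regularity, finite time window with iterated limits --- appear in your proposal, and without them the error estimate on which everything rests does not hold.
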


\begin{proof} The proof of this theorem follows the general outline of the proof of Theorem~6.3 in \cite{KVZ12}.  It contains five steps.  In the first step we select appropriate global solutions to the energy-critical NLS without potential.  In the second step we construct a putative approximate solution to \eqref{equ1.1}.  In the third step we prove that this asymptotically matches the initial data $\phi_n$ and in the fourth step we prove that it is indeed an approximate solution to \eqref{equ1.1}.  In the fifth and last step we use Theorem~\ref{thm:stability} to find $v_n$ and prove the approximation result \eqref{dense2}.

\textbf{Step 1:}  Selection of solutions to \eqref{nls}.

Let $\theta:=\frac 1{100}$.  The construction of solutions to \eqref{nls} depends on the behaviour of $t_n$.  If $t_n\equiv0$, we let $w_n$ and $w_\infty$ be the solutions to \eqref{nls} with initial data $w_n(0)=\phi_{\le (|x_n|/\lambda_n)^{\theta}}$ and $w_\infty(0)=\phi$.  If $t_n\to \pm\infty$, we let $w_n$ and $w_\infty$ be the solutions to \eqref{nls} satisfying
\begin{align*}
\|w_n(t)-e^{it\Delta}\phi_{\le (|x_n|/\lambda_n)^{\theta}}\|_{\dot H^1(\R^3)}\to0 \qtq{and} \|w_\infty(t)-e^{it\Delta}\phi\|_{\dot H^1(\R^3)}\to 0
\end{align*}
as $t\to \pm \infty$.

In all cases, Theorem~\ref{T:gopher} together with perturbation theory and persistence of regularity for \eqref{nls} implies that $w_n$ and $w_\infty$ are global solutions obeying
\begin{equation}\label{cond3}
\left\{ \quad \begin{aligned}
&\|w_n\|_{\dot S^1(\R)}+\|w_\infty\|_{\dot S^1(\R)}\lesssim 1,\\
&\lim_{n\to \infty}\|w_n-w_\infty\|_{\dot S^1(\R)}= 0,\\
&\||\nabla|^s w_n\|_{\dot S^1(\R)}\lesssim \bigl(\tfrac{|x_n|}{\lambda_n}\bigr)^{s\theta} \qtq{for all} s\ge 0.
\end{aligned} \right.
\end{equation}

\textbf{Step 2:} Constructing the approximate solution to \eqref{equ1.1}.

Fix $T>0$ to be chosen later.  We define
\begin{align*}
\tilde v_n(t,x):=\begin{cases} \lambda_n^{-\frac12}[\chi_nw_n](\lambda_n^{-2}t, \lambda_n^{-1}(x-x_n)), & |t|\le \lambda_n^2 T, \\
e^{-i(t-\lambda_n^2 T)\ld}\tilde v_n(\lambda_n^2 T,x), & t>\lambda_n^2 T, \\
e^{-i(t+\lambda_n^2 T)\ld}\tilde v_n(-\lambda_n^2 T,x), & t<-\lambda_n^2 T,
\end{cases}
\end{align*}
where $\chi_n$ is a family of smooth functions satisfying
\begin{equation*}
 |\partial^k \chi_n(x)| \lesssim \bigl( \tfrac{\lambda_n}{|x_n|} \bigr)^k
\qtq{and}
\chi_n(x)=
\begin{cases}
0 \quad&\text{if}\quad   |x_n+\lambda_n x| \leq\tfrac{|x_n|}4,\\
1 \quad&\text{if}\quad  |x_n+\lambda_n x| \geq\tfrac{|x_n|}2.
\end{cases}
\end{equation*}

By a change of variables, H\"older, and \eqref{cond3},
$$
\|\chi_n w_n(\pm T)\|_{\dot H^1_x}\lesssim \|\nabla\chi_n\|_{L_x^3} \|w_n(\pm T)\|_{L_x^6} + \|\chi_n\|_{L_x^\infty} \|\nabla w_n(\pm T)\|_{L_x^2}\lesssim 1.
$$
Combining this with the Strichartz inequality and Theorem~\ref{pro:equivsobolev}, we get
\begin{align}\label{tildevn3}
\|\sqrt{\ld} \tilde v_n\|_{L_t^{10}L_x^{\frac{30}{13}} \cap L_t^\infty L_x^2}
&\lesssim \| \sqrt{\ld^n} (\chi_n w_n)\|_{L_t^{10}L_x^{\frac{30}{13}} \cap L_t^\infty L_x^2} +\|\chi_n w_n(\pm T)\|_{\dot H^1_x}\notag\\
&\lesssim \| \nabla(\chi_n w_n)\|_{L_t^{10}L_x^{\frac{30}{13}} \cap L_t^\infty L_x^2} + 1\notag\\
&\lesssim \|w_n\|_{\dot S^1} + 1\lesssim 1.
\end{align}

\textbf{Step 3:}  Asymptotic agreement of the initial data:
\begin{align}\label{n0}
\lim_{T\to\infty}\limsup_{n\to\infty}\|\sqrt{\ld}[\tilde v_n(\lambda_n^2t_n)-\phi_n]\|_{L_x^2}=0.
\end{align}

We first consider the case when $t_n\equiv0$.  Using the equivalence of Sobolev spaces and performing a change of variables, we obtain
\begin{align*}
\|\sqrt{\ld}[\tilde v_n(0)-\phi_n]\|_{L_x^2}
&\lesssim \|\nabla[\chi_n\phi_{>(|x_n|/\lambda_n)^{\theta}}]\|_{L^2_x} + \| \nabla[(1-\chi_n)\phi]\|_{L_x^2}\\
&\lesssim \|\nabla \chi_n\|_{L_x^3}\|\phi_{>(|x_n|/\lambda_n)^{\theta}}\|_{L_x^6}+\|\nabla \phi_{>(|x_n|/\lambda_n)^{\theta}}\|_{L_x^2}\\
&\quad+  \|\nabla \chi_n\|_{L_x^3} \|\phi\|_{L_x^6(\supp(\nabla \chi_n))} + \|\nabla \phi\|_{L_x^2(\supp(1-\chi_n))} ,
\end{align*}
which converges to zero as $n\to \infty$ since $\chi_n(x)\to 1$ and $\chi_{\supp(\nabla\chi_n)}(x)\to 0$ almost everywhere as $n\to\infty$.

It remains to prove \eqref{n0} when $t_n\to \infty$; the case $t_n\to-\infty$ can be treated analogously.  As $T$ is fixed, for sufficiently large $n$ we have $t_n>T$ and so
\begin{align*}
\tilde v_n(\lambda_n^2t_n,x)=e^{-i(t_n-T)\lambda_n^2\ld}\bigl[\lambda_n^{-\frac 12}(\chi_nw_n(T))\bigl(\tfrac{x-x_n}{\lambda_n}\bigr)\bigr].
\end{align*}
Thus, by a change of variables and the equivalence of Sobolev spaces,
\begin{align}
\| \tilde v_n(\lambda_n^2 t_n)-\phi_n \|_{\dot H^1_a} &=\|\sqrt {\ld^n} [e^{iT\ld^n}(\chi_nw_n(T))-\phi]\|_{L_x^2}\notag\\
&\lesssim \|\nabla[\chi_n(w_n(T)-w_\infty(T))]\|_{L^2_x}\label{n1}\\
&\quad + \|\nabla[\chi_n w_\infty(T) - w_\infty(T)]\|_{L^2_x}\label{n2}\\
&\quad + \|e^{iT\ld^n} \sqrt {\ld^n} \, w_\infty(T) - \sqrt {\ld^n} \,\phi\|_{L_x^2}\label{n3},
\end{align}
where $\ld^n$ is as in Definition~\ref{D:Ln} and $y_n=x_n/\lambda_n\to \infty$.  Note that in this case $\ld^\infty=-\Delta$.

Using \eqref{cond3} and Sobolev embedding, we see that
\begin{align*}
\eqref{n1}\lesssim \|\nabla\chi_n\|_{L_x^3}\|w_n(T)-w_\infty(T)\|_{L_x^6}+\|\chi_n\|_{L_x^\infty}\|\nabla[w_n(T)-w_\infty(T)]\|_{L_x^2} \to 0
\end{align*}
as $n\to \infty$.

By H\"older and the dominated convergence theorem,
\begin{align*}
\eqref{n2}\lesssim \|\nabla\chi_n\|_{L_x^3} \|w_\infty(T)\|_{L_x^6(\supp(\nabla\chi_n))} + \|(1-\chi_n)\nabla w_\infty(T)]\|_{L_x^2} \to 0
\end{align*}
as $n\to\infty$.

Now we look to \eqref{n3}.  By \eqref{convg op5} we have
$$
\bigl\|\bigl[\sqrt {\ld^n} -\sqrt {\ld^\infty}\,\bigr] w_\infty(T) \bigr\|_{L_x^2} + \bigl\|\bigl[\sqrt {\ld^n} -\sqrt {\ld^\infty}\,\bigr] \phi \bigr\|_{L_x^2} \to 0 \qtq{as} n\to \infty,
$$
while by \eqref{convg op4} we have
$$
\bigl\|\bigl[e^{iT\ld^n} -e^{iT\ld^\infty}  \bigr]\sqrt {\ld^\infty}\, w_\infty(T) \bigr\|_{L_x^2}  \to 0 \qtq{as} n\to \infty.
$$
Combining these, we deduce that 
$$
\limsup_{n\to\infty} \eqref{n3} =  \|\sqrt {\ld^\infty} [e^{iT\ld^\infty} w_\infty(T) - \phi]\|_{L_x^2},
$$
which converges to zero as $T\to\infty$ by the construction of  $w_\infty$.  This completes the proof of \eqref{n0}.

\textbf{Step 4:}  We prove that $\tilde v_n$ is an approximate solution to \eqref{equ1.1} in the sense required by Theorem~\ref{thm:stability}:
\begin{align}\label{n6}
\lim_{T\to\infty}\limsup_{n\to\infty}\|\sqrt{\ld} \,[(i\partial_t-\ld )\tilde v_n-|\tilde v_n|^4\tilde v_n]\|_{N^0(\R)}=0.
\end{align}

We first verify \eqref{n6} for $|t|>\lambda_n^2 T$.  By symmetry, it suffices to consider positive times.  For these times we have
$$
e_n:=(i\partial_t-\ld )\tilde v_n-|\tilde v_n|^4\tilde v_n = -|\tilde v_n|^4\tilde v_n.
$$
By Lemma~\ref{L:Leibnitz} and \eqref{tildevn3},
\begin{align*}
\|\sqrt{\ld}e_n\|_{L_t^2L_x^{\frac65}(\{t>\lambda_n^2 T\}\times\R^3)}
&\lesssim \|\sqrt{\ld}\tilde v_n\|_{L_t^{10}L_x^{\frac{30}{13}}(\{t>\lambda_n^2 T\}\times\R^3)}\|\tilde v_n\|_{L_{t,x}^{10}(\{t>\lambda_n^2 T\}\times\R^3)}^4\\
&\lesssim \|e^{-it\ld^n} (\chi_nw_n(T))\|_{L_{t,x}^{10}((0, \infty)\times\R^3)}^4.
\end{align*}
On the other hand, by the analysis of \eqref{n1} and \eqref{n2}, we have
$$
\|e^{-it\ld^n} (\chi_nw_n(T))\|_{L_{t,x}^{10}((0, \infty)\times\R^3)} = \|e^{-it\ld^n} w_\infty(T)\|_{L_{t,x}^{10}((0, \infty)\times\R^3)} +  o(1)
$$
as $n\to\infty$.

To continue, let $w_+$ denote the forward asymptotic state of $w_\infty$; its existence is guaranteed by Theorem~\ref{T:gopher}.
By Strichartz,
\begin{align*}
\|e^{-it\ld^n} &w_\infty(T)\|_{L_{t,x}^{10}((0, \infty)\times\R^3)}\\
&\lesssim \|[e^{-it\ld^n}-e^{-it\ld^\infty}] w_\infty(T)\|_{L_{t,x}^{10}((0,\infty)\times\R^3)}+\|w_\infty(T)-e^{iT\Delta}w_+]\|_{\dot H^1(\R^3)}\\
&\quad+ \|e^{it\Delta}w_+\|_{L_{t,x}^{10}((T, \infty)\times\R^3)},
\end{align*}
which converges to zero as $n\to \infty$ and then $T\to \infty$ in view of Corollary~\ref{C:L10}, the definition of $w_+$, and the monotone convergence theorem.

Next we show \eqref{n6} on the middle time interval $|t|\le \lambda_n^2 T$.  For these times,
\begin{align}
e_n(t,x)&:=[(i\partial_t-\ld)\tilde v_n-|\tilde v_n|^4\tilde v_n](t,x)\notag\\
&=\lambda_n^{-\frac 52}[(\chi_n-\chi_n^5)|w_n|^4w_n](\lambda_n^{-2}t,\lambda_n^{-1}(x-x_n))\label{149}\\
&\quad+2\lambda_n^{-\frac 52}[\nabla\chi_n \cdot\nabla w_n](\lambda_n^{-2} t, \lambda_n^{-1}(x-x_n))\label{150}\\
&\quad+\lambda_n^{-\frac 52}[\Delta\chi_nw_n](\lambda_n^{-2} t,\lambda_n^{-1}(x-x_n))\label{151}\\
&\quad -\lambda_n^{-\frac 12}\tfrac{a}{|x|^2}[\chi_nw_n](\lambda_n^{-2} t,\lambda_n^{-1}(x-x_n)).\label{152}
\end{align}
Performing a change of variables and using Lemma~\ref{L:Leibnitz} and H\"older, we estimate the contribution of \eqref{149} by
\begin{align*}
&\|(\chi_n-\chi_n^5)|w_n|^4 \nabla w_n\|_{L_t^2L_x^{\frac65}}+\|\nabla \chi_n(1-5\chi_n^4)w_n^5\|_{L_t^2L_x^{\frac65}}\\
&\lesssim\Bigl[\|\nabla w_n\|_{L_t^{10}L_x^{\frac{30}{13}}}+\|w_n\|_{L_{t,x}^{10}}\|\nabla \chi_n\|_{L_x^3}\Bigr]\Bigl[\|w_n-w_\infty\|_{L_{t,x}^{10}}^4+\|1_{|x|\sim \frac{|x_n|}{\lambda_n}} w_\infty\|_{L_{t,x}^{10}}^4\Bigr],
\end{align*}
which converges to zero as $n\to \infty$ by \eqref{cond3} and the dominated convergence theorem.  Similarly, we estimate the contribution of \eqref{150} and \eqref{151} by
\begin{align*}
T \Bigl[ \|\nabla \chi_n\|_{L_x^\infty}\|\Delta w_n&\|_{L_t^\infty L_x^2}  + \|\Delta\chi_n\|_{L_x^\infty} \|\nabla w_n\|_{L_t^\infty L_x^2} + \|\nabla\Delta\chi_n\|_{L_x^3}\|w_n\|_{L_t^\infty L_x^6}\Bigr]\\
&\lesssim T\Bigl[\bigl(\tfrac{\lambda_n}{|x_n|}\bigr)^{1-\theta} + \bigl(\tfrac{\lambda_n}{|x_n|}\bigr)^{2}+ \bigl(\tfrac{\lambda_n}{|x_n|}\bigr)^{3}\Bigr] \to 0 \qtq{as} n\to \infty.
\end{align*}
Lastly, we estimate the contribution of \eqref{152} by
\begin{align*}
T &\bigl\|\nabla\bigl(\tfrac{\lambda_n^2\chi_n(x)}{|x_n+\lambda_n x|^2}w_n\bigr)\bigr\|_{L_t^\infty L_x^2}\\
&\lesssim T \Bigl[\bigl\|\nabla\tfrac{\lambda_n^2\chi_n(x)}{|x_n+\lambda_n x|^2}\bigr\|_{L_x^3} \|w_n\|_{L_t^\infty L_x^6} +\bigl\|\tfrac{\lambda_n^2\chi_n(x)}{|x_n+\lambda_n x|^2}\bigr\|_{L_x^\infty} \|\nabla w_n\|_{L_t^\infty L_x^2} \Bigr]\\
&\lesssim T \bigl(\tfrac{\lambda_n}{|x_n|}\bigr)^2 \to 0 \qtq{as} n\to \infty.
\end{align*}
This completes the proof of \eqref{n6}.

\textbf{Step 5:} Constructing $v_n$ and approximation by $C_c^\infty$ functions.

Using \eqref{tildevn3}, \eqref{n0}, \eqref{n6}, and Theorem~\ref{thm:stability}, for $n$ sufficiently large we obtain a global solution $v_n$ to \eqref{equ1.1} with initial data $v_n(0)=\phi_n$ which satisfies
\begin{align*}
\|v_n\|_{L_{t,x}^{10}(\R\times\R^3)}\lesssim 1
\qtq{and}
\lim_{T\to\infty}\limsup_{n\to \infty}\|v_n(t-\lambda_n^2t_n)-\tilde v_n (t)\|_{\dot S^1(\R\times\R^3)}=0.
\end{align*}
It remains to prove the approximation result \eqref{dense2}.

From the density of $C_c^\infty(\R\times\R^3)$ in $\dot X^1(\R)$, for any $\eps>0$ there exists $\psi_\eps\in C_c^\infty(\R\times\R^3)$ such that
\begin{align*}
\|w_\infty-\psi_\eps\|_{\dot X^1(\R)}< \tfrac \eps 3.
\end{align*}
Thus, proving \eqref{dense2} reduces to showing
\begin{align}\label{n11}
\|\tilde v_n(t,x)-\lambda_n^{-\frac 12}w_\infty(\lambda_n^{-2}t, \lambda_n^{-1}(x-x_n))\|_{\dot X^1(\R)}<\tfrac \eps 3
\end{align}
for $n, T$ sufficiently large.  Changing variables we estimate
\begin{align*}
\text{LHS}\eqref{n11}
&\le \|\chi_n w_n-w_\infty\|_{\dot X^1([-T,T])}+\|e^{-i(t-T)\ld^n}[\chi_nw_n(T)]-w_\infty\|_{\dot X^1((T,\infty))}\\
&\quad+\|e^{-i(t+T)\ld^n}[\chi_nw_n(-T)]-w_\infty\|_{\dot X^1((-\infty,-T))}.
\end{align*}
We consider each of these three terms separately.  Using the dominated convergence theorem and \eqref{cond3}, we see that
\begin{align}\label{1018}
\|\chi_n w_n-w_\infty\|_{\dot X^1([-T,T])}&\lesssim \|(1-\chi_n)w_\infty\|_{\dot X^1(\R)}+\|w_n-w_\infty\|_{\dot X^1(\R)}\to 0
\end{align}
as $n\to \infty$. The second and third term can be treated similarly; we only present the details for the second term.  By Strichartz, 
\begin{align*}
\|&e^{-i(t-T)\ld^n}[\chi_nw_n(T)]-w_\infty\|_{\dot X^1((T,\infty))}\\
&\lesssim \|w_\infty\|_{\dot X^1((T,\infty))}+\bigl\|\sqrt {\ld^n}\bigl[\chi_nw_n(T)-w_\infty(T)\bigr]\bigr\|_{L^2_x} +\bigl\|\bigl[\sqrt {\ld^n} -\sqrt {\ld^\infty}\,\bigr] w_\infty(T) \bigr\|_{L_x^2}\\
&\quad+\bigl\|\bigl[e^{-i(t-T)\ld^n}-e^{i(t-T)\Delta}\bigr]\sqrt {\ld^\infty}\, w_\infty(T)\bigr\|_{L_t^{10}L_x^{\frac{30}{13}}}+\|e^{it\Delta}w_+\|_{\dot X^1((T,\infty))}\\
&\quad+\|e^{-iT\Delta}w_\infty(T)-w_+\|_{\dot H^1_x}\to 0 \quad\text{as $n\to \infty$ and then $T\to \infty$}
\end{align*}
by the monotone convergence theorem, \eqref{1018}, Lemma~\ref{L:convg op}, and the definition of the asymptotic state $w_+$.
This completes the proof of \eqref{n11} and with it, the proof of Theorem~\ref{T:embed}.
\end{proof}




\section{Existence of the minimal blowup solution}\label{S:5}

The main goal of this section is to show that the failure of Theorem~\ref{theorem} implies the existence of a minimal counterexample that has good compactness properties.  Note that the failure of Theorem~\ref{theorem} implies the existence of a critical energy $E_c$ such that \eqref{goal} holds for solutions with energy less than $E_c$, but it fails for solutions with energy greater than $E_c$.  The strict positivity of the critical energy $E_c$ is guaranteed by the small data global well-posedness theory.  Observe also that finiteness of $E_c$ is equivalent to the failure of Theorem~\ref{theorem}.

\begin{theorem}[Existence of almost periodic solutions]\label{T:mmbs}
Suppose Theorem~\ref{theorem}  fails. Then there exist a critical energy $0<E_c<\infty$ and a solution $u:[0, T^*)\times\R^3\to \C$ to \eqref{equ1.1} with
$$
E(u)=E_c\qtq{and} \|u\|_{L_{t,x}^{10}([0, T^*)\times\R^3)}=\infty.
$$
Moreover, the orbit of $u$ is precompact in $\dot H^1(\R^3)$ modulo scaling in the sense that there exists $N(t): I\to\R^+$ such that the set $\big\{
N(t)^{-\frac12}u\big(t,\frac{x}{N(t)}\big):\, t\in [0, T^*)\}$ is precompact in $\dot H^1(\R^3)$.  Finally, we may assume that the frequency scale function $N(t)$ satisfies $\inf_{t\in [0, T^*)}N(t)\geq1$.
\end{theorem}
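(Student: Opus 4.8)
The plan is to run the Kenig--Merle concentration--compactness scheme in the form adapted to a broken translation symmetry in \cite{KVZ12}. Since Theorem~\ref{theorem} fails, $E_c<\infty$, while the small-data theory (Proposition~\ref{P:LWP}) gives $E_c>0$; hence for each $n$ there is a solution $u_n$ on its maximal interval with $E(u_n)\to E_c$ and infinite $L^{10}_{t,x}$-norm. Using the time-translation and time-reversal symmetries of \eqref{equ1.1} I would normalize $u_n$ so that $\|u_n\|_{L^{10}_{t,x}(\{t\ge0\})}=\|u_n\|_{L^{10}_{t,x}(\{t\le0\})}=\infty$. By \eqref{E:equiv I} and conservation of energy the data $\{u_n(0)\}$ are bounded in $\dot H^1_a$, and the goal is a Palais--Smale statement: $\{u_n(0)\}$ converges along a subsequence in $\dot H^1$ after rescaling.

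To that end, apply the linear profile decomposition, Theorem~\ref{T:LPD}, to $\{u_n(0)\}$, producing profiles $\phi^j$ with parameters $(\lambda^j_n,x^j_n,t^j_n)$; passing to a subsequence, each $j$ is either \emph{near the origin} ($|x^j_n|/\lambda^j_n$ bounded) or \emph{escaping} ($|x^j_n|/\lambda^j_n\to\infty$). To each profile attach a nonlinear profile $v^j_n$ solving \eqref{equ1.1} with $v^j_n(0)=\phi^j_n$: for escaping profiles this is precisely the content of Theorem~\ref{T:embed}, which produces a global $v^j_n$ with $\|v^j_n\|_{L^{10}_{t,x}}\lesssim 1$ \emph{unconditionally}; for near-origin profiles one uses the scaling symmetry of \eqref{equ1.1} to reduce to a solution with data of the form $e^{-it^j_n\ld}(\text{translate of }\phi^j)$, so that whenever $E^j:=\lim_n E(\phi^j_n)<E_c$ the definition of $E_c$ (supplying wave operators when $t^j_n\to\pm\infty$) bounds $\|v^j_n\|_{L^{10}_{t,x}}\lesssim_{E^j}1$. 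The $\dot H^1_a$- and $L^6_x$-decouplings in Theorem~\ref{T:LPD}, together with $E(u)=\tfrac12\|\sqrt\ld\,u\|_{L^2_x}^2+\tfrac16\|u\|_{L^6_x}^6\ge0$, give $\sum_j E^j+E_w=E_c$ with $E_w\ge0$ and all summands non-negative, and force at least one $E^j>0$ (else $u_n(0)\to0$ in $\dot H^1_a$ and $u_n$ scatters).

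The dichotomy is $\sup_j E^j<E_c$ versus $\sup_j E^j=E_c$. In the first case every nonlinear profile has finite global $L^{10}_{t,x}$-norm, and I would form the approximation $\tilde u^J_n:=\sum_{j\le J}(v^j_n\text{ suitably shifted})+e^{-it\ld}w^J_n$ to $u_n$. Using the asymptotic orthogonality \eqref{E:LP5} to annihilate the cross terms in $|\tilde u^J_n|^4\tilde u^J_n$ --- the escaping profiles decoupling from the others via the operator-convergence statements of Section~\ref{S:LPD} (used there as in \eqref{convg op4}), which is the genuinely new ingredient over the potential-free theory --- together with the smallness of $\|e^{-it\ld}w^J_n\|_{L^{10}_{t,x}}$ and summability of the energy tail, one verifies that $\tilde u^J_n$ is an approximate solution in the sense of Theorem~\ref{thm:stability} with error tending to $0$ as $n\to\infty$ then $J\to J^*$; stability then bounds $\|u_n\|_{L^{10}_{t,x}}$ uniformly, a contradiction. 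Hence $\sup_j E^j=E_c$, which by the energy identity forces a single energy-carrying profile $\phi^1$ with $E^1=E_c$, while all other profiles and $w^J_n$ tend to $0$ in $\dot H^1_a$. This profile must be near the origin (else stability against $v^1_n$ and Theorem~\ref{T:embed} would bound $\|u_n\|_{L^{10}_{t,x}}$) and must have $t^1_n\equiv 0$ (since $t^1_n\to+\infty$, resp.\ $-\infty$, would make $\|u_n\|_{L^{10}_{t,x}(\{t\ge0\})}\to0$, resp.\ on $\{t\le0\}$). Absorbing $\lim x^1_n/\lambda^1_n$ into $\phi^1$ and rescaling by $\lambda^1_n$, we conclude that $\{u_n(0)\}$ converges strongly in $\dot H^1$ modulo scaling to $\phi^1$; since $E$ is scale-invariant and continuous on $\dot H^1$ we get $E(\phi^1)=E_c$, and by the stability theory the solution $u$ with $u(0)=\phi^1$ satisfies $\|u\|_{L^{10}_{t,x}([0,T^*))}=\infty$.

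For almost periodicity modulo scaling, apply the Palais--Smale statement just proved along an arbitrary sequence of times $t_n\in[0,T^*)$ --- for which $\|u\|_{L^{10}_{t,x}}$ is infinite on one of $[0,t_n]$, $[t_n,T^*)$, so $\{u(t_n)\}$ is again a minimizing sequence of non-scattering data --- obtaining precompactness of $\{u(t):t\in[0,T^*)\}$ in $\dot H^1$ modulo scaling, and hence a frequency-scale function $N(t)$; no spatial translation survives precisely because the extracted profile is pinned near the origin by the potential. Finally, to arrange $\inf_{t\in[0,T^*)}N(t)\ge1$ I would use the scaling symmetry of \eqref{equ1.1} --- under $u\mapsto\mu^{1/2}u(\mu^2\cdot,\mu\cdot)$ one has $N(t)\mapsto N(\mu^2t)/\mu$ --- together with the local constancy of $N$: partition $[0,T^*)$ into characteristic subintervals on which $N$ is comparable to a constant, pass to a subsequence of these realizing a running infimum of the characteristic scales, and extract (via Theorem~\ref{thm:stability} and the compactness above) a minimal blowup solution whose frequency scale stays $\gtrsim1$ on half its evolution; relabeling that half as $[0,T^*)$ and rescaling once more yields the claim, as in \cite{KVnote,KV5}. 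The main obstacle is the dichotomy step --- showing that the superposition of nonlinear profiles, a mixture of genuine solutions of \eqref{equ1.1} near the origin and \eqref{nls}-modeled solutions far away, is an approximate solution --- the escaping-profile decoupling being where the results of Sections~\ref{S:LPD} and~\ref{S:4} do the heavy lifting.
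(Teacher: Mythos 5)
Your proposal is essentially the paper's own proof: the paper reduces Theorem~\ref{T:mmbs} to the Palais--Smale condition (Proposition~\ref{P:PS}), whose proof runs through the same trichotomy you describe --- linear profile decomposition via Theorem~\ref{T:LPD}, nonlinear profiles from Theorem~\ref{T:embed} for escaping data and from the definition of $E_c$ for near-origin data, decoupling via \eqref{E:LP5} and the operator-convergence lemmas, and contradiction via Theorem~\ref{thm:stability} --- with the $N(t)\geq1$ normalization delegated, as in the paper, to the rescaling argument of \cite[\S4]{KV5}. Your reasoning is sound and matches the paper's route step for step.
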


Results of this type are by now standard in the literature.  The key result underlying the existence of the minimal counterexample (with critical energy and infinite $L_{t,x}^{10}$-norm) is a Palais--Smale condition for minimizing sequences of blowup solutions to \eqref{equ1.1}; this is recorded as Proposition~\ref{P:PS} below.  This Palais--Smale condition also implies that the orbit of the minimal counterexample is precompact modulo scaling.  Finally, having extracted this minimal blowup solution, a simple rescaling argument allows us to assume that the frequency scale function $N(t)$ is bounded from below either in the future or in the past; see for example \cite[\S4]{KV5}.  The interval $[0, T^*)$ appearing in Theorem~\ref{T:mmbs} is the maximal future extension of $u$, where we can guarantee that $\inf_{t\in [0, T^*)}N(t)\geq1$.

\begin{proposition}[Palais--Smale condition]\label{P:PS}
Let $u_n: I_n\times\R^3\to \C$ be a sequence of solutions with $E(u_n)\to E_c$ and let $t_n\in I_n$ so that
\begin{align*}
\lim_{n\to\infty} \|u_n\|_{L_{t,x}^{10}(\{t\geq t_n\}\times\R^3)}=\lim_{n\to\infty}\|u_n\|_{L_{t,x}^{10}(\{t \leq t_n\}\times\R^3)}=\infty.
\end{align*}
Then, passing to a subsequence, $\{u(t_n)\}$ converges in $\dot H^1(\R^3)$ modulo scaling.
\end{proposition}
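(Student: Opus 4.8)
The plan is to run the standard concentration--compactness argument: apply the linear profile decomposition of Theorem~\ref{T:LPD} to the sequence $u_n(t_n)$ and then use the stability theory (Theorem~\ref{thm:stability}), the embedding of far-away profiles (Theorem~\ref{T:embed}), and the operator-convergence statements of Lemma~\ref{L:convg op} to upgrade it to a decomposition of the nonlinear flows.

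First I would translate in time so that $t_n\equiv 0$; the hypotheses then say that the $L^{10}_{t,x}$-norm of $u_n$ diverges on both $\{t\geq0\}\times\R^3$ and $\{t\leq0\}\times\R^3$. Since $E(u_n)\to E_c$, the equivalence \eqref{E:equiv I} shows $\{u_n(0)\}$ is bounded in $\dot H^1_a(\R^3)$, so Theorem~\ref{T:LPD} produces (along a subsequence) profiles $\phi^j$, parameters $\lambda_n^j,x_n^j,t_n^j$, and errors $w_n^J$ with the orthogonality \eqref{E:LP5}, the $\dot H^1_a$- and $L^6_x$-decouplings, and $\|e^{-it\ld}w_n^J\|_{L^{10}_{t,x}}\to0$ as $J\to J^*$. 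Combining the two decouplings yields the energy decoupling $E(u_n(0))=\sum_{j\leq J}E(\phi_n^j)+E(w_n^J)+o(1)$; moreover each $E(\phi_n^j)$ converges to a limit $E^j$ (using Corollary~\ref{C:L6} to discard the $L^6_x$-energy when $t_n^j\to\pm\infty$), and since the nonlinearity is defocusing, $E^j>0$ for every $j$ and $\sum_j E^j\leq E_c$.

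Next I would attach a nonlinear profile to each $j$. If $x_n^j\equiv0$, then $\ld^{n_j}=\ld$, and since $\ld$ is \emph{scale invariant} the profile may be modeled on \eqref{equ1.1} itself: let $v_n^j$ be the rescaling of the maximal-lifespan solution of \eqref{equ1.1} with data $\phi^j$ at time $0$ (when $t_n^j\equiv0$) or scattering to $e^{-it\ld}\phi^j$ (when $t_n^j\to\pm\infty$); whenever $E^j<E_c$ the definition of $E_c$ forces this solution to be global with $L^{10}_{t,x}$-norm controlled by $E^j$, and the small-data theory (Proposition~\ref{P:LWP}) disposes of the tail $j\gg1$. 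If instead $|x_n^j|/\lambda_n^j\to\infty$, then Theorem~\ref{T:embed} directly furnishes a global solution $v_n^j$ of \eqref{equ1.1} with $\|v_n^j\|_{L^{10}_{t,x}}\lesssim_{\|\phi^j\|_{\dot H^1}}1$, together with the $C_c^\infty$-approximation \eqref{dense2}. Now the dichotomy: suppose first that $E^j<E_c$ for \emph{every} $j$. Then all nonlinear profiles are global with uniformly bounded space-time norms, and one forms the approximate solution $u_n^J:=\sum_{j\leq J}v_n^j+e^{-it\ld}w_n^J$. Using \eqref{E:LP5} to decouple the $v_n^j$ from one another, and \eqref{dense2} plus support considerations to decouple ``origin'' profiles from ``far-away'' ones, one shows $\|u_n^J\|_{\dot S^1(\R)\cap L^{10}_{t,x}}\lesssim1$ uniformly in $J,n$, and likewise that the error $(i\partial_t-\ld)u_n^J-|u_n^J|^4u_n^J$ tends to $0$ in $\sqrt{\ld}\,N^0(\R)$ as $n\to\infty$ and then $J\to J^*$; Theorem~\ref{thm:stability} then bounds $\|u_n\|_{L^{10}_{t,x}(\R\times\R^3)}$, contradicting the blowup hypothesis. \textbf{This step is the main obstacle}: it requires the full force of \eqref{E:LP5}, the $C_c^\infty$-approximation \eqref{dense2}, and --- in place of the $L^p\to L^{p'}$ dispersive estimates for $e^{-it\ld}$, which are unavailable for $a<0$ --- the convergence statements of Lemma~\ref{L:convg op} (notably \eqref{convg op4} and Corollary~\ref{C:L10}) to control the interaction of profiles concentrating at the origin, governed by \eqref{equ1.1}, with profiles escaping to infinity, governed by \eqref{nls}.

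Consequently there is exactly one profile, with $E^1=E_c$, and hence $\|w_n^1\|_{\dot H^1_a}\to0$. If $|x_n^1|/\lambda_n^1\to\infty$, then Theorem~\ref{T:embed} and Theorem~\ref{thm:stability} (applied to $v_n^1+e^{-it\ld}w_n^1$) would again bound $\|u_n\|_{L^{10}_{t,x}}$, so $x_n^1\equiv0$. If $t_n^1\to+\infty$, then since $\|e^{-it\ld}\phi_n^1\|_{L^{10}_{t,x}(\{t\geq0\}\times\R^3)}=\|e^{-is\ld}\phi^1\|_{L^{10}_{t,x}(\{s\geq t_n^1\}\times\R^3)}\to0$ we get $\|e^{-it\ld}u_n(0)\|_{L^{10}_{t,x}(\{t\geq0\}\times\R^3)}\to0$, whence Proposition~\ref{P:LWP} bounds the forward $L^{10}_{t,x}$-norm of $u_n$ for large $n$, contradicting forward blowup; symmetrically $t_n^1\to-\infty$ contradicts backward blowup. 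Hence $t_n^1\equiv0$, so $u_n(0)=(\lambda_n^1)^{-1/2}\phi^1\bigl((\cdot)/\lambda_n^1\bigr)+w_n^1$ with $\|w_n^1\|_{\dot H^1_a}\to0$; by the scale invariance of the $\dot H^1_a$-norm this is exactly the statement that $(\lambda_n^1)^{1/2}u_n(0,\lambda_n^1\,\cdot)\to\phi^1$ in $\dot H^1_a(\R^3)$ (equivalently, in $\dot H^1(\R^3)$, by \eqref{E:equiv I}), i.e., $u_n(t_n)$ converges modulo scaling.
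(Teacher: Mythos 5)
Your proposal follows essentially the same route as the paper's proof: time-translate to $t_n\equiv 0$, apply the $\dot H^1_a$ linear profile decomposition to $u_n(0)$ with the resulting energy decoupling, attach nonlinear profiles via Theorem~\ref{T:embed} (far-away) or rescaled solutions of \eqref{equ1.1} (at the origin), run the usual dichotomy on whether a single profile carries all of $E_c$, and use the stability theorem together with the convergence statements of Lemma~\ref{L:convg op} to derive a contradiction in the many-profile case before ruling out $|x_n^1|/\lambda_n^1\to\infty$ and $t_n^1\to\pm\infty$. The only cosmetic difference is that the paper phrases the dichotomy as Case~I/Case~II via $\sup_j\limsup_n E(\phi_n^j)$ rather than via your ``all $E^j<E_c$,'' but these are equivalent given the summability of the profile energies.
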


\begin{proof}
The proof of Proposition~\ref{P:PS} follows along well established lines.  For a thorough discussion of this result and its relevance in proving Theorem~\ref{T:mmbs} in the absence of a potential see \cite{KVnote}.  The presence of the potential introduces several new difficulties.  These are of a similar nature as those arising for the energy-critical NLS outside a convex obstacle and will be overcome by mimicking the arguments in \cite{KVZ12}.  In what follows, we will sketch the proof emphasizing the main steps.

By time translation symmetry, we may assume $t_n\equiv0$; thus,
\begin{align}\label{scat diverge}
\lim_{n\to\infty} \|u_n\|_{L_{t,x}^{10}(\{t\geq 0\}\times\R^3)}=\lim_{n\to\infty}\|u_n\|_{L_{t,x}^{10}(\{t \leq 0\}\times\R^3)}=\infty.
\end{align}
Applying Proposition~\ref{T:LPD} to the sequence $u_n(0)$ (which is bounded in $\dot H^1_a(\R^3)$) and passing to a subsequence if necessary, we have the linear profile decomposition
\begin{align}\label{s0}
u_n(0)=\sum_{j=1}^J \phi_n^j+w_n^J
\end{align}
satisfying the properties stated in Proposition~\ref{T:LPD}. In particular, for any finite $0\leq J \leq J^*$, we have the energy decoupling property
\begin{align}\label{s01}
\lim_{n\to \infty}\Bigl\{E(u_n)-\sum_{j=1}^J E(\phi_n^j)-E(w_n^J)\Bigr\}=0.
\end{align}

To prove Proposition~\ref{P:PS}, we need to show that $J^*=1$, $w_n^1\to 0$ in $\dot H^1(\R^3)$, $t_n^1\equiv 0$, and $x_n\equiv 0$.  To this end, we will show that all other possibilities contradict \eqref{scat diverge}.  We discuss two cases.

\textbf{Case I:} $\sup_j \limsup_{n\to \infty} E(\phi_n^j)=E_c$.

From the Hardy inequality and the non-triviality of the profiles, we deduce that $\liminf_{n\to \infty} E(\phi_n^j)>0$ for each finite $1\leq j\leq J^*$. Thus, \eqref{s01} implies that there is a single profile in the decomposition \eqref{s0} (that is, $J^*=1$) and we can write
\begin{equation}\label{s11}
u_n(0)=\phi_n +w_n \quad\text{with}~~ \lim_{n\to \infty} \|w_n\|_{\dot H^1(\R^3)}=0.
\end{equation}

If $\frac{|x_n|}{\lambda_n}\to \infty$, then for $n$ sufficiently large Theorem~\ref{T:embed} guarantees the existence of a global solution $v_n$ to \eqref{equ1.1} with initial data $v_n(0)=\phi_n$ and uniform bounded $L_{t,x}^{10}$-norm.  By Theorem~\ref{thm:stability}, this space-time bound extends to the solution $u_n$ for $n$ sufficiently large, thus contradicting \eqref{scat diverge}.  Therefore, we must have $x_n\equiv 0$.

To obtain the desired compactness property, it remains to preclude the case $t_n\to\pm\infty$.  By symmetry, it suffices to consider $t_n\to \infty$.  By Strichartz, monotone convergence, and Corollary~\ref{C:L10}, we see that 
\begin{align*}
\|e^{-it\ld}u_n(0)\|_{L_{t,x}^{10}(\{t>0\}\times\R^3)}
&\leq \|e^{-it\ld}w_n\|_{L_{t,x}^{10}(\{t>0\}\times\R^3)} + \|e^{-it\ld^\infty}\phi\|_{L_{t,x}^{10}(\{t>t_n\}\times\R^3)}\\
&\quad +\|[e^{-it\ld^n}-e^{-it\ld^\infty}]\phi\|_{L_{t,x}^{10}(\R\times\R^3)} \to 0 \qtq{as} n\to \infty.
\end{align*}
By the small data theory, this implies that $\|u_n\|_{L_{t,x}^{10}(\{t>0\}\times\R^3)}\to 0$ as $n\to \infty$, which again contradicts \eqref{scat diverge}.

\medskip

\textbf{Case II:} $\sup_j \limsup_{n\to \infty} E(\phi_n^j) \leq E_c-3\delta$ for some $\delta>0$.

In this case, for each finite $J\leq J^*$ we have $E(\phi_n^j) \leq E_c-2\delta$ for all $1\leq j\leq J$ and $n$ sufficiently large.  

We will prove that Case~II is inconsistent with \eqref{scat diverge}. To this end, we first introduce nonlinear profiles associated with each $\phi_n^j$.

For those $j$ for which $\frac{|x_n^j|}{\lambda_n^j}\to+\infty$, let $v_n^j$ denote the global solution to \eqref{equ1.1} guaranteed by Theorem~\ref{T:embed}.

We next consider those $j$ for which $x_n^j\equiv 0$.  If $t_n^j\equiv 0$, let $v^j$ denote the maximal-lifespan solution to \eqref{equ1.1} with initial data $v^j(0)=\phi^j$.  If instead $t_n^j\to \pm \infty$, let $v^j$ denote the maximal-lifespan solution to \eqref{equ1.1} which scatters to $e^{-it\ld}\phi^j$ as $t\to \pm\infty$.  In both cases we define
$$
v_n^j(t,x):=(\lambda_n^j)^{-\frac12}v^j\big(\tfrac{t}{(\lambda_n^j)^2}+t_n^j,\tfrac{x}{\lambda_n^j}\big).
$$
Note that $v_n^j$ is also a solution to \eqref{equ1.1} with
\begin{align*}
\lim_{n\to\infty}\|v_n^j(0)-\phi_n^j\|_{\dot H^1(\R^3)}=0.
\end{align*}
In particular, by the Hardy inequality we have that $E(v_n^j) \leq E_c-\delta$ for all $1\leq j\leq J$ and $n$ sufficiently large.  By the definition of the critical energy $E_c$, this implies that the solutions $v_n^j$ are global in time and have finite Strichartz norms.

The next step in the argument is to construct an approximate solution to \eqref{equ1.1} with finite $L_{t,x}^{10}$-norm. We define
\begin{align*}
u_n^J:=\sum_{j=1}^J v_n^j+e^{-it\ld}w_n^J.
\end{align*}
By construction, $u_n^J$ is defined globally in time and satisfies
$$
\|u_n^J(0)-u_n(0)\|_{\dot H^1(\R^3)}\to 0\qtq{as} n\to\infty
$$
for any $J$.

The asymptotic decoupling of parameters \eqref{E:LP5} begets (as in \cite[Lemma~7.3]{KVZ12}) asymptotic decoupling of the nonlinear solutions $v_n^j$.  Using this together with the control we have over the $L_t^{10}\dot H^{1, \frac{30}{13}}_x$-norm of each $v_n^j$, one can show that
$$
\limsup_{n\to \infty} \|u_n^J\|_{L_t^{10}\dot H^{1, \frac{30}{13}}_x(\R\times\R^3)}\lesssim_{E_c, \delta} 1,
$$
uniformly in $J$.

Lastly, one can prove that for large enough $n$ and $J$, $u_n^J$ is an approximate solution to \eqref{equ1.1} in the sense that
$$
\lim_{J\to\infty}\limsup_{n\to\infty}\bigl\|\sqrt{\ld}\bigl[(i\partial_t-\ld)u_n^J-|u_n^J|^4u_n^J\bigr]\bigr\|_{N^0(\R)}=0.
$$
In order to verify this statement, one uses the asymptotic decoupling of the solutions $v_n^j$ together with the fact that, by the linear profile decomposition, $e^{-it\ld}w_n^J$ converges to zero (as $n$ and $J$ converge to infinity) in energy-critical Strichartz spaces without derivatives.  Corollary~\ref{C:Keraani3.7} is used to control terms involving one derivative of $e^{-it\ld}w_n^J$; see, for example, the justification of (7.19) in \cite{KVZ12}.

Putting everything together and invoking Theorem~\ref{thm:stability}, we deduce that the solutions $u_n$ inherit the space-time bounds of $u_n^J$ for $n$ sufficiently large.  This however contradicts \eqref{scat diverge}.
\end{proof}




\section{Precluding the minimal blowup solution}

Let us suppose that Theorem~\ref{theorem} were to fail and let $u:[0, T^*)\times\R^3\to \C$ be a minimal counterexample of the type provided by Theorem~\ref{T:mmbs}.  The fact that the orbit of $u$ is precompact in $\dot H^1_x$ modulo scaling combined with the lower bound $N(t)\geq 1$ on the frequency scale function yields that for any $\eta>0$ there exists $C(\eta)>0$ such that
\begin{align}\label{compactness}
\int_{|x|\geq C(\eta)} |\nabla u(t,x)|^2 + |u(t,x)|^6\, dx\leq \int_{|x|\geq \frac{C(\eta)}{N(t)}} |\nabla u(t,x)|^2 + |u(t,x)|^6\, dx\leq \eta,
\end{align}
uniformly for $t\in [0, T^*)$.
 
Note also that by the Hardy inequality and the conservation of energy, for all solutions $u$ to \eqref{equ1.1} we have
\begin{align}\label{ke}
\|\nabla u(t)\|_{L_x^2}^2\sim E(u) \quad\text{uniformly for $t\in [0, T^*)$}.
\end{align}

To preclude the existence of the minimal counterexample $u$ guaranteed by Theorem~\ref{T:mmbs}, we will distinguish two cases, namely, (1) the solution $u$ is global forward in time (i.e. $T^*=\infty$) or (2) the solution $u$ blows up in finite time (i.e. $T^*<\infty$).  We will show that the first scenario is inconsistent with the virial identity; see Theorem~\ref{T:7.1}.  Finally, we will employ a transport of mass argument to preclude the second scenario; see Theorem~\ref{T:7.2}.  This will complete the proof of Theorem~\ref{theorem}.

\begin{theorem}\label{T:7.1}
There are no solutions to \eqref{equ1.1} of the form given in Theorem~\ref{T:mmbs} with $T^*=\infty$.
\end{theorem}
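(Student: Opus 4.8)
The plan is to reach a contradiction by means of a spatially truncated virial identity, exploiting the fact that the minimal blowup solution $u$ furnished by Theorem~\ref{T:mmbs} stays concentrated near the origin: since $N(t)\ge1$, precompactness modulo scaling yields the bound \eqref{compactness}, which says that only a negligible amount of energy lies outside a fixed ball. Fix a smooth radial weight $a_R:\R^3\to[0,\infty)$ with $a_R(x)=|x|^2$ for $|x|\le R$, with $a_R$ constant for $|x|\ge 2R$, and with $|\partial^k a_R(x)|\lesssim_k R^{2-k}$ for $k\ge1$, and set $V_R(t):=\int_{\R^3}a_R(x)\,|u(t,x)|^2\,dx$. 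Because $u(t)\in\dot H^1(\R^3)\hookrightarrow L^6_x$ with $\|u(t)\|_{L^6_x}\lesssim\|\nabla u(t)\|_{L^2_x}\lesssim E_c^{1/2}$ by \eqref{E:equiv I} and \eqref{ke}, and $a_R$ is supported in $\{|x|\le 2R\}$, H\"older's inequality gives $\|u(t)\|_{L^2_x(|x|\le2R)}\lesssim R E_c^{1/2}$; hence $V_R(t)$ is finite for each $t$ and, differentiating in time,
\[
\dot V_R(t)=2\int_{\R^3}\nabla a_R(x)\cdot\Im\bigl(\bar u\,\nabla u\bigr)(t,x)\,dx,\qquad \sup_{t\ge0}|\dot V_R(t)|\lesssim R^2 E_c.
\]
(As usual, these identities are first derived for smooth solutions and then extended to the class $C_t\dot H^1_a(\R^3)$ by an approximation argument based on the local well-posedness and stability theory of Section~\ref{SS:LWP}.)

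Differentiating once more produces the usual virial/Morawetz expansion
\[
\ddot V_R(t)=8\int_{\R^3}\Bigl(|\nabla u|^2+\tfrac{a}{|x|^2}|u|^2\Bigr)\,dx+8\int_{\R^3}|u|^6\,dx+\mathcal E_R(t),
\]
where $\mathcal E_R(t)$ collects the contributions of the region $\{|x|\ge R\}$ on which $a_R$ deviates from $|x|^2$; using the derivative bounds on $a_R$ together with $|\nabla(a|x|^{-2})|\lesssim|x|^{-3}$, one obtains $|\mathcal E_R(t)|\lesssim\int_{|x|\ge R}\bigl(|\nabla u|^2+|x|^{-2}|u|^2+|u|^6\bigr)\,dx$. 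The bulk term is coercive: by conservation of energy $E(u)=E_c$ and the fact that $\int|u|^6\ge0$, it equals $16E_c+\tfrac{16}{3}\int|u|^6\ge16E_c>0$ (that $E_c$ is genuinely positive, and that $\|\nabla u(t)\|_{L^2}^2\sim E_c$, rests on the sharp Hardy inequality, cf. \eqref{ke}). For the error term, given $\eta>0$ the gradient and sextic pieces are $\le\eta$ for $R\ge C(\eta)$ by \eqref{compactness}, while the potential piece is controlled by applying Hardy's inequality to $u\,\phi(\cdot/R)$, with $\phi$ a cutoff vanishing near the origin, which yields $\int_{|x|\ge R}|x|^{-2}|u|^2\lesssim\int_{|x|\ge R/2}\bigl(|\nabla u|^2+|u|^6\bigr)$ and is again small by \eqref{compactness}. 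Choosing $\eta$, and hence a fixed radius $R=R_0$, small enough that these errors cannot overcome the bulk, we conclude
\[
\ddot V_{R_0}(t)\ge 8E_c>0\qquad\text{for all }t\in[0,\infty).
\]

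Integrating this over $[0,T]$ gives $\dot V_{R_0}(T)\ge\dot V_{R_0}(0)+8E_c\,T$, which tends to $+\infty$ as $T\to\infty$ and so contradicts the uniform bound $\sup_{t}|\dot V_{R_0}(t)|\lesssim R_0^2 E_c$ established above. Hence no minimal blowup solution with $T^*=\infty$ can exist.

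The only point that is not completely routine is the treatment of the potential-generated error $\int_{|x|\ge R}|x|^{-2}|u|^2$: when $a\ge0$ this term carries a favorable sign and may simply be discarded, but for $a<0$ it must be estimated, and since it does not appear directly in the concentration statement \eqref{compactness} one must first re-express it through a localized Hardy inequality before invoking \eqref{compactness}. The coercivity of the bulk term is, as anticipated in the Introduction, an immediate consequence of the sharp Hardy inequality together with conservation of energy; everything else is the familiar bookkeeping attached to truncated virial arguments, and the key structural input is that the solution remains centered near the origin, so that truncation at the fixed radius $R_0$ does not destroy coercivity.
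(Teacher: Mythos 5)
Your proof is correct and follows essentially the same route as the paper: a spatially truncated virial quantity with $|\dot V_R(t)|\lesssim_u R^2$ uniformly in time, whose second derivative is shown to be bounded below by a positive constant using the concentration estimate \eqref{compactness}, the Hardy inequality, and \eqref{ke}, after which the fundamental theorem of calculus on a long time interval yields the contradiction. The only differences are cosmetic: you obtain coercivity of the bulk by writing it as $16E_c+\tfrac{16}{3}\int|u|^6$ directly from energy conservation while the paper bounds it below by $\|\nabla u(t)\|_{L^2_x}^2$ via Hardy, and in your localized Hardy step the cutoff boundary term actually comes out as $\bigl(\int_{|x|\ge R/2}|u|^6\,dx\bigr)^{1/3}$ rather than the integral itself, which is still small by \eqref{compactness}, so the conclusion is unaffected.
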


\begin{proof}
Assume towards a contradiction that $u:[0,\infty)\times\R^3\to \C$ is such a solution.  Let $\phi$ be a smooth radial cutoff such that
\begin{align*}
\phi(r)=\begin{cases}
r & \text{for } r\leq 1\\
0 & \text{for } r\geq 2,
\end{cases}
\end{align*}
and define
$$
V_R(t): =\int_{\R^3} \psi(x) |u(t,x)|^2\,dx \qtq{where} \psi(x):=R^2\phi\bigl(\tfrac{|x|^2}{R^2}\bigr)
$$
for some $R>0$.

Differentiating $V_R$ with respect to the time variable and using H\"older, Sobolev embedding, and \eqref{ke}, we find
\begin{align}\label{1 deriv}
|\partial_t V_R(t)| &= \Bigl|4\Im \int_{\R^3} \phi'\bigl(\tfrac{|x|^2}{R^2}\bigr) \overline{u(t,x)} \,x\cdot \nabla u(t,x) \,dx\Bigr|\notag\\
&\lesssim R^2  \|u(t)\|_6\|\nabla u(t)\|_2\lesssim_u R^2,
\end{align}
uniformly for $t\geq 0$.

Taking another derivative with respect to the time variable and using the Hardy inequality, Sobolev embedding, and \eqref{compactness}, we obtain
\begin{align*}
\partial_{tt}V_R(t) &= 4 \Re \int_{\R^3}\psi_{ij}(x)u_{i}(t,x)\bar u_j(t,x)\, dx
        - \tfrac 43 \int_{\R^3} \bigl(\Delta \psi\bigr)(x) |u(t,x)|^{6}\, dx\\
&\quad  - \int_{\R^3} \bigl(\Delta \Delta \psi\bigr)(x) |u(t,x)|^2\, dx - 4a\int_{\R^3}  \tfrac{x}{|x|^4}\nabla \psi(x)|u(t,x)|^2\, dx\\
&=8\int_{\R^3} |\nabla u(t,x)|^2 + \tfrac a{|x|^2}|u(t,x)|^2 + |u(t,x)|^6\, dx\\
&\quad + O\Bigl(\int_{|x|\geq R} |\nabla u(t,x)|^2 + |u(t,x)|^6\, dx\Bigr) \\
&\gtrsim \|\nabla u(t)\|_{L_x^2}^2 - \eta,
\end{align*}
provided $\eta$ is small and $R=R(\eta)$ is chosen sufficiently large.  Combining this with \eqref{ke} and taking $\eta$ sufficiently small depending on the energy of $u$, we get
$$
\partial_{tt}V_R(t) \gtrsim_u 1\quad\text{uniformly for $t\geq 0$.}
$$
Together with \eqref{1 deriv} and the fundamental theorem of calculus on $[0, T]$ for $T$ sufficiently large, this yields the desired contradiction.
\end{proof}

\begin{theorem}\label{T:7.2}
There are no solutions to \eqref{equ1.1} of the form given in Theorem~\ref{T:mmbs} with $T^*<\infty$.
\end{theorem}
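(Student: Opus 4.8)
The plan is to argue by contradiction: I will show that a finite-time blowup solution of the type in Theorem~\ref{T:mmbs} must have zero mass on every ball, hence be identically zero, contradicting $\|u\|_{L^{10}_{t,x}([0,T^*)\times\R^3)}=\infty$. The first input is that $N(t)\to\infty$ as $t\to T^*$. If not, there is $t_n\nearrow T^*$ with $N(t_n)$ bounded; since $N(t)\ge1$, along a subsequence $N(t_n)\to N_0\in[1,\infty)$, and precompactness modulo scaling yields $u(t_n)\to u_*$ in $\dot H^1(\R^3)$ after a further subsequence. The local theory (Proposition~\ref{P:LWP}) produces a solution with data $u_*$ on a fixed interval $(-\delta,\delta)$, and the stability theory (Theorem~\ref{thm:stability}) then shows $u$ exists on $(t_n-\tfrac\delta2,t_n+\tfrac\delta2)$ for large $n$; as $t_n\to T^*$ this extends $u$ beyond $T^*$, contradicting maximality.

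Next, for each fixed $R>0$ I would show $\int_{|x|\le R}|u(t,x)|^2\,dx\to0$ as $t\to T^*$. Setting $v(t,y):=N(t)^{-1/2}u(t,y/N(t))$, which ranges over a precompact set $K\subset\dot H^1(\R^3)$, a change of variables gives $\int_{|x|\le R}|u(t)|^2\,dx=R^2\,(RN(t))^{-2}\int_{|y|\le RN(t)}|v(t,y)|^2\,dy$. Since $\int_{|y|\le\rho}|\phi|^2\,dy\le c\rho^2\|\phi\|_{L^6}^2$ and the $L^6$ tails are uniformly small over compact subsets of $\dot H^1$, one has $\sup_{\phi\in K}\rho^{-2}\int_{|y|\le\rho}|\phi|^2\,dy\to0$ as $\rho\to\infty$; combined with $RN(t)\to\infty$ from the previous paragraph, the claim follows.

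Now fix a smooth radial cutoff $\phi$ with $\phi=1$ on $[0,1]$, $\supp\phi\subset[0,2]$, and set $m_R(t):=\int_{\R^3}\phi(|x|/R)|u(t,x)|^2\,dx$, which is finite because $\dot H^1\hookrightarrow L^6$. Since $\Im\bigl(\bar u\,\tfrac{a}{|x|^2}u\bigr)=\Im\bigl(\bar u|u|^4u\bigr)=0$, the potential and nonlinearity drop out of the local conservation law, leaving $\partial_t|u|^2=-2\nabla\cdot\Im(\bar u\nabla u)$; hence $\partial_t m_R(t)=\tfrac2R\int\phi'(|x|/R)\,\tfrac{x}{|x|}\cdot\Im(\bar u\nabla u)\,dx$, and by H\"older on the annulus $R\le|x|\le2R$ (with exponents $6,2,3$) we get $|\partial_t m_R(t)|\lesssim\|u(t)\|_{L^6(|x|\ge R)}\|\nabla u(t)\|_{L^2}$. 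By \eqref{compactness} there is $\eta(R)\to0$ with $\|u(t)\|_{L^6(|x|\ge R)}\le\eta(R)$ uniformly in $t\in[0,T^*)$, while $\|\nabla u(t)\|_{L^2}\lesssim_u1$ by \eqref{ke}; thus $|\partial_t m_R(t)|\le C_u\eta(R)$ uniformly on $[0,T^*)$. For $R\ge R_0$ and $t\in[0,T^*)$, the fundamental theorem of calculus and $T^*<\infty$ give $\int_{|x|\le R_0}|u(0,x)|^2\,dx\le m_R(0)\le m_R(t)+T^*C_u\eta(R)$; letting $t\to T^*$ (so that $m_R(t)\le\int_{|x|\le2R}|u(t)|^2\to0$ by the preceding paragraph) and then $R\to\infty$ forces $\int_{|x|\le R_0}|u(0,x)|^2\,dx=0$. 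As $R_0$ was arbitrary, $u(0)\equiv0$, hence $u\equiv0$ by uniqueness — the desired contradiction.

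The main obstacle is the interplay of the two limits. The vanishing of the local mass at $T^*$ genuinely requires $N(t)\to\infty$ together with \emph{precompactness} (not mere boundedness) of the rescaled orbit in $\dot H^1$, and this smallness is only available for a \emph{fixed} radius; one must therefore send $t\to T^*$ first and only then $R\to\infty$, which is possible precisely because the transport bound $|\partial_t m_R|\le C_u\eta(R)$ is uniform in $t$ and small in $R$ — this last fact being exactly what the compactness statement \eqref{compactness} and the energy bound \eqref{ke} provide.
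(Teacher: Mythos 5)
Your proof is correct and follows the same overall strategy as the paper: establish $N(t)\to\infty$ as $t\nearrow T^*$, deduce that the local mass $\int_{|x|\le R}|u(t)|^2\,dx$ vanishes in this limit, and then propagate this backward in time via a truncated mass identity to conclude $u(0)\equiv 0$. The one noteworthy technical variation is that you use \eqref{compactness} together with H\"older on the annulus to obtain the decaying bound $|\partial_t m_R|\lesssim_u\eta(R)\to 0$ (the paper instead uses Hardy to get only $|\partial_t M_R|\lesssim_u 1$), which lets you send $t\to T^*$ before $R\to\infty$ and thereby bypass the explicit appeal to conservation of mass made in the paper's final step.
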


\begin{proof}
Assume towards a contradiction that $u:[0, T^*)\times\R^3\to \C$ is such a solution.  By Corollary~5.19 in \cite{KVnote}, this implies
\begin{equation}\label{eq8.3}
\liminf_{t\nearrow T^*} N(t)=\infty.
\end{equation}
Consequently, the mass of $u$ leaves any fixed ball as $t\to T^*$; specifically,
\begin{equation}\label{eq8.9}
\limsup_{t\nearrow T^*}\int_{|x|\leq R}|u(t,x)|^2\,dx=0\quad\text{for any $R>0$.}
\end{equation}
Indeed, for $0<\eta<1$ and $t\in [0, T^*)$, using H\"older we estimate
\begin{align*}
\int_{|x|\leq R}|u(t,x)|^2\, dx
&\leq\int_{|x|\leq \eta R}|u(t,x)|^2\, dx+\int_{\eta R\leq |x|\leq R}|u(t,x)|^2\,dx\\
&\lesssim \eta^2R^2\|u(t)\|_{L_x^6}^2+R^2\Big(\int_{|x|\geq \eta R}|u(t,x)|^{6}\,dx\Big)^{1/3}.
\end{align*}
By \eqref{ke} and Sobolev embedding, we can make the first term arbitrarily small by letting $\eta\to 0$.  Moreover, by \eqref{compactness} and \eqref{eq8.3}, we see that the second term converges to zero as $t\to T^*$.

To continue, for $t\in [0,T^*)$ we define
\begin{equation}\label{eq8.13}
M_{R}(t):=\int_{\mathbb{R}^3}\phi\big(\tfrac{|x|}{R}\big)|u(t,x)|^2\,dx,
\end{equation}
where $\phi$ is a smooth radial function such that $\phi(r)=1$ for $r\leq 1$ and $\phi(r)=0$ for $r\geq 2$.  By \eqref{eq8.9},
\begin{equation}\label{eq8.14}
\limsup_{t\nearrow T^*} M_{R}(t)=0\quad\text{for all $R>0$.}
\end{equation}
On the other hand, by the Hardy inequality and \eqref{ke},
\begin{equation*}
|\partial_{t}M_{R}(t)|\lesssim\|\nabla u(t)\|_{L^2_x}\big\|\tfrac{u(t)}{|x|}\big\|_{L^2_x}\lesssim_u1.
\end{equation*}
Thus, by the fundamental theorem of calculus,
\begin{equation*}
M_{R}(t_1)=M_{R}(t_2)+\int_{t_2}^{t_1}\partial_{t}M_{R}(\tau)\,d\tau\lesssim_u M_{R}(t_2)+|t_1-t_2|
\end{equation*}
for all $t_1,t_2\in [0,T^*)$ and $R>0$.  Letting $t_2\nearrow T^*$, we deduce that
\begin{equation*}
M_{R}(t_1)\lesssim_u\big|T^*-t_1\big|.
\end{equation*}
Now letting $R\to\infty$ and invoking the conservation of mass, we derive
\begin{equation*}
\|u(0)\|_{L^2_x}^2\lesssim_u \big|T^*-t_1\big|.
\end{equation*}
Finally, letting $t_1\nearrow T^*$ we obtain $u(0)\equiv0$, which contradicts the fact that the $L_{t,x}^{10}$-norm of $u$ is infinite.
\end{proof}

\section{The focusing case}\label{S:7}

In this section, we consider the analogue of \eqref{equ1.1} with focusing nonlinearity, posed in $\R^d$ with $d\geq 3$:
\begin{equation} \label{E:fNLS}
 (i\partial_t-\ld)u= - |u|^\frac{4}{d-2}u \qtq{with} u(t=0) \in \dot H^1_a(\R^d),
\end{equation}
which conserves the energy
\begin{equation}\label{energy_a}
E_a(f):=\int_{\R^d}  \tfrac12|\nabla f(x)|^2 + \tfrac{a}{2|x|^2} |f(x)|^2 - \tfrac{d-2}{2d} |f(x)|^{\frac{2d}{d-2}}\,dx.
\end{equation}
Note that for the topics we will be discussing in this section, it costs very little in clarity to treat general dimensions.

\begin{definition}  Given $a>-(\frac{d-2}2)^2$, we define $\beta>0$ via $a=(\frac{d-2}2)^2[\beta^2-1]$, or equivalently, $\sigma=\frac{d-2}2(1-\beta)$.  We then define the \emph{ground state soliton}  by
\begin{equation}\label{E:Wa}
W_a(x) :=    [d(d-2)\beta^2]^{\frac{d-2}{4}} \biggl[ \frac{ |x|^{\beta-1} }{ 1+|x|^{2\beta} }\biggr]^{\frac{d-2}{2}}.
\end{equation}
\end{definition}

It is not difficult (though laborious) to verify that 
$$
\ld W_a = |W_a|^\frac{4}{d-2} W_a 
$$
and, using a standard variant of Euler's Beta integral (cf. (1.1.20) in \cite{AAR}), that
\begin{equation}\label{WaPoho}
\|W_a\|_{\dot H^{1}_a(\R^d)}^2 = \int_{\R^d} |W_a(x)|^{\frac{2d}{d-2}}\,dx = 
	\tfrac{\pi d(d-2)}{4} \Bigl[\tfrac{2\sqrt{\pi}\beta^{d-1}}{\Gamma(\frac{d+1}{2})} \Bigr]^\frac2d.
\end{equation}
Thus $W_a$ is a ground state soliton in the sense of being a radial non-negative static solution to \eqref{E:fNLS}.   As we will see in Proposition~\ref{P:Wa} below,  these solitons occur as optimizers in Sobolev embedding inequalities when $a\leq 0$, but not when $a>0$.  The proof of that proposition will also explain how we derived formula \eqref{E:Wa} .

\begin{proposition}[Sharp Sobolev embedding] \label{P:Wa}
Fix $d\geq 3$ and $a>-(\frac{d-2}2)^2 $.\\
{\upshape(i)} If $-(\frac{d-2}2)^2 < a < 0$, then
\begin{equation}\label{E:SS-}
\|f\|_{L^{\frac{2d}{d-2}}_x(\R^d)} \leq \|W_a\|_{L^{\frac{2d}{d-2}}_x(\R^d)} \|W_a\|_{\dot H^{1}_a(\R^d)}^{-1} \|f\|_{\dot H^{1}_a(\R^d)}^{ }. 
\end{equation}
Moreover, equality holds in \eqref{E:SS-} if and only if $f(x)=\alpha W_a(\lambda x)$ for some $\alpha\in\C$ and some $\lambda>0$.\\
{\upshape(ii)} The inequality \eqref{E:SS-} is valid also when $a=0$; however, equality now holds if and only if $f(x)=\alpha W_0(\lambda x+y)$ for some $\alpha\in\C$, some $y\in\R^d$,
and some $\lambda>0$.\\
{\upshape(iii)} If $a > 0$, then
\begin{equation}\label{E:SS+}
\|f\|_{L^{\frac{2d}{d-2}}_x(\R^d)} \leq \|W_0\|_{L^{\frac{2d}{d-2}}_x(\R^d)} \|W_0\|_{\dot H^{1}(\R^d)}^{-1} \|f\|_{\dot H^{1}_a(\R^d)}^{ }. 
\end{equation}
In this case, equality never holds (for $f\not\equiv 0$); however, the constant in \eqref{E:SS+} cannot be improved.
\end{proposition}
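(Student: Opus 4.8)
The plan is to treat the three cases in increasing order of difficulty; parts (iii) and (ii) are short, and essentially all the work lies in part (i). Throughout I use that Hardy's inequality makes $\dot H^1_a(\R^d)$ and $\dot H^1(\R^d)$ coincide as sets with comparable norms for every $a>-(\tfrac{d-2}2)^2$, so symmetric rearrangement and density by $C_c^\infty(\R^d\setminus\{0\})$ are available. For (iii) the inequality is immediate: when $a>0$,
$$
\|f\|_{\dot H^1_a(\R^d)}^2=\|\nabla f\|_{L^2}^2+a\int|x|^{-2}|f|^2\,dx\ \ge\ \|\nabla f\|_{L^2}^2 ,
$$
and the classical sharp Sobolev inequality — whose optimal constant is $\|W_0\|_{L^{2d/(d-2)}}\|W_0\|_{\dot H^1}^{-1}$ with extremisers $\alpha W_0(\lambda\,\cdot+y)$ — yields \eqref{E:SS+}. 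For $f\not\equiv 0$ the potential term is finite (Hardy) and strictly positive, so the displayed inequality is strict; chaining with the classical Sobolev inequality shows \eqref{E:SS+} is strict for every $f\not\equiv 0$, so equality never holds. To see the constant is optimal, truncate $W_0$ to $\{|x|\le\rho\}$ so it lies in $L^2\cap\dot H^1$, then translate this truncation to $\{|x-y|\le\rho\}$ with $|y|\to\infty$: the potential term tends to $0$ while the Sobolev quotient of the truncation tends to $\|W_0\|_{L^{2d/(d-2)}}\|W_0\|_{\dot H^1}^{-1}$ as $\rho\to\infty$, and a diagonal sequence finishes. Part (ii) requires nothing beyond quoting Talenti and Aubin, once one observes that $W_0$ from \eqref{E:Wa} at $a=0$ is the Aubin--Talenti bubble, which is precisely the $a=0$ case of $\ld W_a=|W_a|^{4/(d-2)}W_a$.

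For (i), first I would reduce to radial data. For $-(\tfrac{d-2}2)^2<a<0$, symmetric decreasing rearrangement $f\mapsto |f|^*$ preserves $\|f\|_{L^{2d/(d-2)}}$, does not increase $\|\nabla f\|_{L^2}$ (P\'olya--Szeg\H o, plus the diamagnetic inequality for complex $f$), and does not decrease $\int|x|^{-2}|f|^2$ (Hardy--Littlewood, since $|x|^{-2}$ equals its own symmetric decreasing rearrangement); as $a<0$, the latter gives $\|\,|f|^*\|_{\dot H^1_a}\le\|f\|_{\dot H^1_a}$, so \eqref{E:SS-} follows from the radial case. Then I would perform the substitution that both proves the radial inequality and explains formula \eqref{E:Wa}: for radial $f$ write $f(x)=|x|^{-\sigma}g(|x|^{\beta})$, with $g$ regarded as a radial function on $\R^d$. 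Using the algebraic identity $\sigma(d-2-\sigma)=-a$, the potential term exactly cancels the cross term generated by the weight $|x|^{-\sigma}$; and the reparametrisation $s=r^{\beta}$ is calibrated by $\sigma=\tfrac{d-2}2(1-\beta)$ so that one returns to dimension $d$, giving
$$
\|f\|_{\dot H^1_a(\R^d)}^2=\beta\,\|g\|_{\dot H^1(\R^d)}^2 \qquad\text{and}\qquad \|f\|_{L^{\frac{2d}{d-2}}(\R^d)}^{\frac{2d}{d-2}}=\beta^{-1}\,\|g\|_{L^{\frac{2d}{d-2}}(\R^d)}^{\frac{2d}{d-2}} .
$$
Hence the $\ld$-Sobolev quotient of $f$ is a fixed power of $\beta$ times the classical Sobolev quotient of $g$, and the classical sharp inequality (attained by radial $\alpha W_0(\lambda\,\cdot)$) yields \eqref{E:SS-}. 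Pulling the extremiser $g=W_0$ back through the substitution reproduces exactly the profile \eqref{E:Wa} up to a multiplicative constant and a dilation; this simultaneously identifies $W_a$ as the optimiser and shows the sharp constant is $\|W_a\|_{L^{2d/(d-2)}}\|W_a\|_{\dot H^1_a}^{-1}$, the bookkeeping being transparent thanks to $\|W_a\|_{\dot H^1_a}^2=\|W_a\|_{L^{2d/(d-2)}}^{2d/(d-2)}$ from \eqref{WaPoho}.

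It remains to characterise equality in \eqref{E:SS-}. If $f\not\equiv 0$ achieves it, then so does $|f|^*$ and moreover $\|\,|f|^*\|_{\dot H^1_a}=\|f\|_{\dot H^1_a}$, which forces equality in each of the three rearrangement estimates above. Equality in the diamagnetic inequality gives $f=e^{i\theta}|f|$ for a constant $\theta$ (this is the role of $\alpha\in\C$); equality in the Hardy--Littlewood inequality for the \emph{strictly} radially decreasing weight $|x|^{-2}$ forces $|f|$ to agree with $|f|^*$ about the origin (this is what pins the centre at $0$ and excludes the translated bubbles allowed when $a=0$); and equality in P\'olya--Szeg\H o, via the rigidity theorem of Brothers--Ziemer, rules out flat pieces. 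Feeding this radial $f$ through the substitution, $g$ is a radial extremiser of the classical Sobolev inequality, hence $g=\alpha W_0(\lambda\,\cdot)$, and unwinding gives $f(x)=\alpha W_a(\widetilde\lambda\, x)$, as asserted.

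I expect the equality analysis in (i) to be the main obstacle: one must argue carefully that equality under rearrangement leaves no translation freedom, so that the optimiser is genuinely centred at the origin, and one must check that $f\mapsto g$ is a bona fide bijection between the radial sectors of $\dot H^1_a(\R^d)$ and $\dot H^1(\R^d)$ with no boundary contributions in the integrations by parts — routine on $C_c^\infty(\R^d\setminus\{0\})$ and then extended by density. The change-of-variables computation itself is elementary but exponent-heavy; everything conspires precisely because $\tfrac{2d}{d-2}$ is the critical exponent for the true dimension $d$.
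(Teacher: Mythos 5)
Your proof is correct and, for the hard part (i), follows a genuinely different and more elementary route than the paper's.

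For parts (ii) and (iii) you match the paper. (The truncation in (iii) is unnecessary extra care: one can check directly that $\int |x|^{-2}|W_0(x-x_n)|^2\,dx$ is finite and tends to zero as $|x_n|\to\infty$ for every $d\ge 3$, which is what the paper uses; but your diagonal argument is also correct.)

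For part (i) the paper proceeds by \emph{concentration compactness}: after the rearrangement reduction, it applies the radial bubble decomposition (Theorem~\ref{T:Sob bubble} / Corollary~\ref{C:Bubble}) to an optimizing sequence, uses the strict superadditivity of $t\mapsto t^{d/(d-2)}$ to force a single profile, thereby \emph{proving existence} of an extremizer, and then derives the Euler--Lagrange equation $\ld f = |f|^{4/(d-2)} f$, finds a first integral, and solves the resulting separable ODE to identify the extremizer as $W_a$. You instead exhibit an explicit isometry between the radial sectors of $\dot H^1_a$ and $\dot H^1$: writing $f(r)=r^{-\sigma}g(r^\beta)$ and completing the square via the identity $\sigma(d-2-\sigma)=-a$, the cross term vanishes and the exponent bookkeeping (calibrated by $\sigma=\tfrac{d-2}{2}(1-\beta)$) gives
$$
\|f\|_{\dot H^1_a}^2=\beta\,\|g\|_{\dot H^1}^2,\qquad \|f\|_{L^{\frac{2d}{d-2}}}^{\frac{2d}{d-2}}=\beta^{-1}\|g\|_{L^{\frac{2d}{d-2}}}^{\frac{2d}{d-2}},
$$
reducing the radial inequality to the classical one, with no compactness needed, and producing formula \eqref{E:Wa} automatically upon pulling back $W_0$. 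I checked the bookkeeping and the identities above hold: $d-2-2\sigma=(d-2)\beta$ makes the radial measure transform to exactly the $d$-dimensional one after $s=r^\beta$. One buys simplicity and an explicit explanation of where $W_a$ comes from; one loses the practice with the bubble decomposition machinery that the paper deploys elsewhere.

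Two small remarks on your equality analysis. First, the invocation of Brothers--Ziemer is superfluous once you have observed that equality in the Hardy--Littlewood inequality against the \emph{strictly} symmetric-decreasing weight $|x|^{-2}$ already forces $|f|=|f|^*$ a.e.\ (centered at the origin); you therefore land directly in the radial sector with no flat-piece issues to worry about. Second, the equality case of the diamagnetic inequality $\|\nabla |f|\|_2=\|\nabla f\|_2$ giving a globally constant phase requires the connectedness of $\{f\neq 0\}$; since you have already deduced $|f|=|f|^*$, the set $\{|f|>0\}$ is an open ball (or all of $\R^d$), so this is automatic, but it is worth flagging the logical order: pin down $|f|$ first, then the phase.
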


We will be proving part (iii) of this proposition via concentration compactness.  Before beginning the proof of Proposition~\ref{P:Wa}, we record the following elegant encapsulation of  the concentration compactness philosophy in the setting of classical Sobolev embedding due to G\'erard; see \cite{GerardESIAM}.   For our purposes, we need only consider radial (=spherically symmetric) functions; the paper \cite{GerardESIAM} treats the general case.  In Corollary~\ref{C:Bubble} below, we will adapt this to the $a\neq 0$ setting.

\begin{theorem}[Bubble decomposition for Sobolev embedding, the radial case]\label{T:Sob bubble}\leavevmode\quad\\
Fix $d\geq 3$ and let $\{f_n\}_{n\geq 1}$ be a sequence of radial functions, bounded in $\dot H^1_x(\R^d)$.  Then there
exist $J^*\in\{0,1,2,\ldots\}\cup\{\infty\}$, non-zero radial functions $\{\phi^j\}_{j=1}^{J^*}\subseteq \dot H^1_x$, and scales $\{\lambda_n^j\}_{j=1}^{J^*}\subseteq (0,\infty)$
so that along some subsequence in $n$ we may write
\begin{gather}\label{E:Sob bubble f=}
f_n(x) = \sum_{j=1}^J (\lambda_n^j)^{\frac{2-d}2} \phi^j\bigl(x/\lambda_n^j\bigr) + r_n^J(x) \quad\text{for all finite $\,0\leq J\leq J^*$}
\end{gather}
with the following five properties:
\begin{gather}\label{E:Sob bubble Lq}
\limsup_{J\to \infty} \limsup_{n\to\infty} \bigl\| r_n^J \bigr\|_{L^{\frac{2d}{d-2}}_x} =0 \\
\label{E:Sob bubble KE}
\sup_{J} \, \limsup_{n\to\infty}
    \biggl| \|f_n\|_{\dot H^1_x}^2 - \biggl( \| r_n^J \|_{\dot H_x^1}^2 + \sum_{j=1}^J \| \phi^j \|_{\dot H_x^1}^2 \biggr)\biggr| = 0\\
\label{E:Sob bubble PE}
\limsup_{J\to \infty} \limsup_{n\to\infty} \biggl| \bigl\|f_n\bigr\|_{L^{\frac{2d}{d-2}}_x}^{\frac{2d}{d-2}}
    - \sum_{j=1}^J \bigl\| \phi^j \bigr\|_{L^{\frac{2d}{d-2}}_x}^{\frac{2d}{d-2}}\biggr| =0 \\
\label{E:Sob bubble diverge}
\liminf_{n\to\infty} \biggl[
    \frac{\lambda_n^j}{\lambda_n^{j'}} + \frac{\lambda_n^{j'}}{\lambda_n^j} \biggr]= \infty \quad \text{for all $j\neq j'$}\\
\label{E:Sob bubble weak}
(\lambda_n^j)^{\frac{d-2}2} f_n \bigl( \lambda_n^j x\bigr) \rightharpoonup \phi^j(x)
    \quad\text{weakly in $\dot H_x^1$.}
\end{gather}
\end{theorem}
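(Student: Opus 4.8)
The plan is to prove Theorem~\ref{T:Sob bubble} by the standard concentration-compactness scheme: extract the profiles $\phi^j$ one at a time via an inverse Sobolev inequality and then verify the decoupling statements \eqref{E:Sob bubble Lq}--\eqref{E:Sob bubble weak}. This is precisely the scheme behind Theorem~\ref{T:LPD} --- Lemma~\ref{lm:refs} followed by Proposition~\ref{P:inverse Strichartz} and an iteration --- with the propagator $e^{-it\ld}$ replaced by the identity and the space-time Strichartz norm replaced by $L^{\frac{2d}{d-2}}_x$. The result is due to G\'erard \cite{GerardESIAM}; I would include a short proof for completeness.

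The heart of the matter is the inverse estimate: \emph{if $\{g_n\}$ is a sequence of radial functions with $\limsup_n\|g_n\|_{\dot H^1(\R^d)}\le A$ and $\limsup_n\|g_n\|_{L^{\frac{2d}{d-2}}(\R^d)}\ge\varepsilon>0$, then, after passing to a subsequence, there are scales $\mu_n\in(0,\infty)$ and a non-zero radial $\phi\in\dot H^1(\R^d)$ with $\mu_n^{\frac{d-2}2}g_n(\mu_n\,\cdot)\rightharpoonup\phi$ weakly in $\dot H^1$ and $\|\phi\|_{\dot H^1}\gtrsim_{A}\varepsilon^{d/2}$.} To prove this I would first record the refined Sobolev inequality $\|g\|_{L^{\frac{2d}{d-2}}_x}\lesssim\|g\|_{\dot H^1}^{\frac{d-2}{d}}\|g\|_{\dot B^{1-\frac d2}_{\infty,\infty}(\R^d)}^{\frac2d}$, which follows from the Littlewood--Paley square function estimate and Bernstein's inequality in exactly the way Lemma~\ref{lm:refs} is proved; it produces a dyadic frequency $N_n$ and then, via Bernstein in $L^\infty$, a point $x_n$ with $N_n^{1-\frac d2}|(P_{N_n}g_n)(x_n)|\gtrsim_{A}\varepsilon^{d/2}$, where $P_{N_n}$ denotes the usual Littlewood--Paley projection. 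Here is where radiality enters essentially: since the Fourier multiplier $P_{N_n}$ commutes with rotations, $P_{N_n}g_n$ is radial, and the weighted pointwise bound $|x|^{\frac{d-2}2}|h(x)|\lesssim\|h\|_{\dot H^1}$ for radial $h$ --- valid for $d\ge3$, obtained by writing $h(r)=-\int_r^\infty h'$ and applying Cauchy--Schwarz --- confines the concentration: $N_n|x_n|\lesssim_{A,\varepsilon}1$. Consequently I may take $\mu_n:=N_n^{-1}$, pass to a further subsequence so that $\mu_n^{\frac{d-2}2}g_n(\mu_n\,\cdot)\rightharpoonup\phi$ in $\dot H^1$ (with $\phi$ radial, as a weak limit of radials) and $N_nx_n\to y_\infty\in\R^d$, and then test against the fixed $\dot H^{-1}$ functional $h\mapsto(P_1h)(y_\infty)$, using the uniform Lipschitz bound on $P_1\bigl(\mu_n^{\frac{d-2}2}g_n(\mu_n\,\cdot)\bigr)$ to transfer the lower bound from $N_nx_n$ to $y_\infty$; this yields $|(P_1\phi)(y_\infty)|\gtrsim_{A}\varepsilon^{d/2}$, hence $\phi\neq0$ and the asserted lower bound. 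Crucially, no translation parameter is needed, which is exactly why the decomposition involves only scales.

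Granting this, the iteration is routine. Set $r_n^0:=f_n$; given $r_n^{J-1}$, stop and put $J^*:=J-1$ if $\limsup_n\|r_n^{J-1}\|_{L^{\frac{2d}{d-2}}}=0$, and otherwise apply the inverse estimate to $\{r_n^{J-1}\}$ to obtain $\lambda_n^J$ and $\phi^J$, setting $r_n^J:=r_n^{J-1}-(\lambda_n^J)^{\frac{2-d}2}\phi^J(\cdot/\lambda_n^J)$; then \eqref{E:Sob bubble f=} holds by construction. The scale orthogonality \eqref{E:Sob bubble diverge} and the weak-limit identity \eqref{E:Sob bubble weak} are established together by induction on $J$ --- if two scales had bounded ratio, the later profile would already be accounted for in the weak limit of the already-corrected remainder, forcing it to vanish --- as in \cite{KVZ12,Oberbook}. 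Since $\phi^J$ is the weak $\dot H^1$-limit of $(\lambda_n^J)^{\frac{d-2}2}r_n^{J-1}(\lambda_n^J\,\cdot)$, the Hilbert-space Pythagorean relation gives $\|r_n^{J-1}\|_{\dot H^1}^2=\|r_n^J\|_{\dot H^1}^2+\|\phi^J\|_{\dot H^1}^2+o(1)$; telescoping yields \eqref{E:Sob bubble KE}, hence $\sum_j\|\phi^j\|_{\dot H^1}^2\le\limsup_n\|f_n\|_{\dot H^1}^2<\infty$ and $\|\phi^J\|_{\dot H^1}\to0$ as $J\to J^*$. Combined with the quantitative bound $\|\phi^J\|_{\dot H^1}\gtrsim_{A}\bigl(\limsup_n\|r_n^{J-1}\|_{L^{\frac{2d}{d-2}}}\bigr)^{d/2}$, this forces $\limsup_J\limsup_n\|r_n^J\|_{L^{\frac{2d}{d-2}}}=0$, which is \eqref{E:Sob bubble Lq}. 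Finally, for \eqref{E:Sob bubble PE} I would apply the Brezis--Lieb lemma (Lemma~\ref{reflemms}) at exponent $\frac{2d}{d-2}$ to the rescaled remainders $(\lambda_n^j)^{\frac{d-2}2}r_n^{j-1}(\lambda_n^j\,\cdot)$, which are bounded in $L^{\frac{2d}{d-2}}$ and converge a.e.\ to $\phi^j$ by Rellich--Kondrashov (after a further subsequence); since $h\mapsto\lambda^{\frac{2-d}2}h(\cdot/\lambda)$ is an isometry of $L^{\frac{2d}{d-2}}$, this gives $\|r_n^{j-1}\|_{L^{\frac{2d}{d-2}}}^{\frac{2d}{d-2}}-\|r_n^j\|_{L^{\frac{2d}{d-2}}}^{\frac{2d}{d-2}}-\|\phi^j\|_{L^{\frac{2d}{d-2}}}^{\frac{2d}{d-2}}\to0$, and summing over $j$ and invoking \eqref{E:Sob bubble Lq} for the tail yields \eqref{E:Sob bubble PE}. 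The successive passages to subsequences are reconciled by a diagonal argument. The only step requiring real care is the inverse estimate --- and within it, the confinement $N_n|x_n|\lesssim1$ furnished by the radial pointwise bound; the rest follows well-established templates.
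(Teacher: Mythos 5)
Your argument cannot be matched against a proof in the paper, because the paper does not prove Theorem~\ref{T:Sob bubble} at all: it is imported as a black box from G\'erard \cite{GerardESIAM} (who treats the general, non-radial case) and then transferred to $a\neq 0$ in Corollary~\ref{C:Bubble}. What you supply instead is a correct, self-contained proof specialized to radial functions, built on the same inverse-inequality-plus-iteration template that the paper uses for Theorem~\ref{T:LPD} via Lemma~\ref{lm:refs} and Proposition~\ref{P:inverse Strichartz}. The one genuinely non-routine ingredient in your argument is also the right structural explanation of why the radial statement carries only scales and no translation cores: since $P_{N_n}g_n$ is again radial, the bound $|x|^{\frac{d-2}2}|h(x)|\lesssim\|h\|_{\dot H^1}$ for radial $h$ forces $N_n|x_n|\lesssim_{A,\eps}1$, so after rescaling the concentration point stays in a fixed compact set and the profile is captured by testing against the fixed $\dot H^{-1}$ functional $h\mapsto (P_1h)(y_\infty)$; this step is sound, as are the Hilbert-space decoupling for \eqref{E:Sob bubble KE}, the Brezis--Lieb/Rellich argument for \eqref{E:Sob bubble PE}, and the $\eps^{d/2}$ lower bound driving \eqref{E:Sob bubble Lq}. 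The trade-off between the two routes is clear: the citation to \cite{GerardESIAM} buys generality (arbitrary bounded sequences, with translations), while your argument buys a short proof exactly adapted to the radial setting actually needed in Section~\ref{S:7}. Two minor points. First, the refined Sobolev inequality $\|g\|_{L^{\frac{2d}{d-2}}}\lesssim\|g\|_{\dot H^1}^{\frac{d-2}d}\|g\|_{\dot B^{1-\frac d2}_{\infty,\infty}}^{\frac 2d}$ is classical, but its proof is not literally ``exactly'' that of Lemma~\ref{lm:refs}: the exponent $\tfrac{2d}{d-2}$ is an even integer only in low dimensions, so for general $d$ one should either cite the inequality or run the usual level-set/Besov argument rather than the multilinear expansion; this is cosmetic. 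Second, you defer the bookkeeping establishing \eqref{E:Sob bubble diverge} and \eqref{E:Sob bubble weak} (replacing weak limits of corrected remainders by weak limits of $f_n$ itself) to \cite{KVZ12,Oberbook}; that is the same level of detail the paper itself adopts for Theorem~\ref{T:LPD}, so it is acceptable, though in a fully written version this induction is the place where care is required.
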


The concentration compactness principle recorded above can de adapted to the case $a\neq 0$, as follows:

\begin{corollary}\label{C:Bubble}
Fix $d\geq 3$ and $a>-(\frac{d-2}2)^2$.  Let $\{f_n\}_{n\geq 1}$ be a bounded radial sequence in $\dot H^1_a(\R^d)$.  Then there
exist $J^*\in\{0,1,2,\ldots\}\cup\{\infty\}$, non-zero radial functions $\{\phi^j\}_{j=1}^{J^*}\subseteq \dot H^1_x$, and scales $\{\lambda_n^j\}_{j=1}^{J^*}\subseteq (0,\infty)$
so that passing to a subsequence in $n$, equations \eqref{E:Sob bubble f=}, \eqref{E:Sob bubble Lq}, and \eqref{E:Sob bubble PE}--\eqref{E:Sob bubble weak} hold.  Moreover, in place of \eqref{E:Sob bubble KE}, we have
\begin{equation}\label{E:Bubble K}
\sup_{J} \, \limsup_{n\to\infty}  \biggl| \|f_n\|_{\dot H^1_a(\R^d)}^2 - \biggl( \| r_n^J \|_{\dot H_a^1(\R^d)}^2 + \sum_{j=1}^J \| \phi^j \|_{\dot H_a^1(\R^d)}^2 \biggr)\biggr| = 0.
\end{equation}
\end{corollary}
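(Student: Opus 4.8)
The plan is to deduce all of the conclusions of Corollary~\ref{C:Bubble} except \eqref{E:Bubble K} directly from Theorem~\ref{T:Sob bubble}. Indeed, by Theorem~\ref{pro:equivsobolev} (equivalently, the sharp Hardy inequality) a sequence bounded in $\dot H^1_a(\R^d)$ is bounded in $\dot H^1(\R^d)$, so applying Theorem~\ref{T:Sob bubble} to $\{f_n\}$ produces $J^*$, the radial profiles $\phi^j\in\dot H^1(\R^d)$, the scales $\lambda_n^j$, and the remainders $r_n^J$, together with \eqref{E:Sob bubble f=}, \eqref{E:Sob bubble Lq}, \eqref{E:Sob bubble PE}, \eqref{E:Sob bubble diverge}, and \eqref{E:Sob bubble weak}. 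Since $\dot H^1_a(\R^d)=\dot H^1(\R^d)$ as sets, with $\|g\|_{\dot H^1_a(\R^d)}^2=\int_{\R^d}|\nabla g|^2+a|x|^{-2}|g|^2\,dx$ for every $g\in\dot H^1$, all terms in \eqref{E:Bubble K} are well defined and finite (the potential pieces by Hardy's inequality). It therefore only remains to prove \eqref{E:Bubble K}, and since the gradient part of \eqref{E:Bubble K} is precisely \eqref{E:Sob bubble KE}, the sole new ingredient is the decoupling of the potential term: for each finite $J$,
\[
\int_{\R^d}\frac{|f_n(x)|^2}{|x|^2}\,dx=\sum_{j=1}^J\int_{\R^d}\frac{|\phi^j(x)|^2}{|x|^2}\,dx+\int_{\R^d}\frac{|r_n^J(x)|^2}{|x|^2}\,dx+o(1)\quad\text{as }n\to\infty .
\]
Here we use that $\int|\phi_n^j|^2/|x|^2\,dx=\int|\phi^j|^2/|x|^2\,dx$, since the weight $|x|^{-2}$ is invariant under the $L^2$-scaling $g\mapsto(\lambda_n^j)^{\frac{2-d}2}g(\cdot/\lambda_n^j)$ defining $\phi_n^j$.

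To prove the displayed identity I would expand $|f_n|^2=\bigl|\sum_{j\le J}\phi_n^j+r_n^J\bigr|^2$; the diagonal terms give the stated sum plus $\int|r_n^J|^2/|x|^2\,dx$, so the task reduces to showing that the cross terms $\int\phi_n^j\overline{\phi_n^k}|x|^{-2}\,dx$ (for $j\ne k$) and $\int\phi_n^j\overline{r_n^J}|x|^{-2}\,dx$ (for $1\le j\le J$) tend to zero. The point to be careful about is that these weighted bilinear quantities are invariant under simultaneously rescaling both factors, so the scale separation \eqref{E:Sob bubble diverge} is invisible to crude size estimates and must be extracted via weak convergence. For the first cross term, pass to a subsequence along which $\lambda_n^j/\lambda_n^k\to0$ (the reverse case being symmetric) and rescale so that $\lambda_n^k\equiv1$; then $\phi_n^k$ becomes the fixed function $\phi^k\in\dot H^1(\R^d)$, while $\phi_n^j=(\lambda_n^j)^{\frac{2-d}2}\phi^j(\cdot/\lambda_n^j)\rightharpoonup0$ weakly in $\dot H^1(\R^d)$ because $\lambda_n^j\to0$. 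By Hardy's inequality $g\mapsto\int g\,\overline{\phi^k}|x|^{-2}\,dx$ is a bounded linear functional on $\dot H^1(\R^d)$, i.e.\ $|x|^{-2}\phi^k\in\dot H^{-1}(\R^d)$, so this term vanishes in the limit. For the second cross term, rescale so that $\lambda_n^j\equiv1$: then $\phi_n^j$ is the fixed function $\phi^j$, while $\tilde r_n:=(\lambda_n^j)^{\frac{d-2}2}r_n^J(\lambda_n^j\,\cdot\,)\rightharpoonup0$ in $\dot H^1(\R^d)$ — this follows from \eqref{E:Sob bubble weak} and \eqref{E:Sob bubble diverge}, since $(\lambda_n^j)^{\frac{d-2}2}f_n(\lambda_n^j\,\cdot\,)\rightharpoonup\phi^j$ while the rescalings of the other bubbles $\phi_n^k$, $k\ne j$, converge weakly to zero — and pairing against $|x|^{-2}\phi^j\in\dot H^{-1}(\R^d)$ again closes the estimate.

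Combining the displayed potential decoupling with \eqref{E:Sob bubble KE} gives, for each fixed finite $J$, that $\limsup_{n\to\infty}\bigl|\,\|f_n\|_{\dot H^1_a(\R^d)}^2-\|r_n^J\|_{\dot H^1_a(\R^d)}^2-\sum_{j=1}^J\|\phi^j\|_{\dot H^1_a(\R^d)}^2\,\bigr|=0$; taking the supremum over $J$ then yields \eqref{E:Bubble K}, since this supremum is of quantities that already vanish. I expect the only genuinely new difficulty to be the one flagged above: the weighted cross terms carry no intrinsic smallness and must be killed using weak $\dot H^1$-convergence after normalizing one factor (or the rescaled remainder) to unit scale, together with the Hardy inequality to realize $|x|^{-2}\phi^j$ as an element of $\dot H^{-1}(\R^d)$. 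Everything else is the standard bookkeeping from the proofs of bubble and profile decompositions (cf.\ \cite{KVZ12,Oberbook}).
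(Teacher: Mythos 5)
Your proof is correct, but it takes a genuinely different route from the paper's.  The paper observes that the decoupling \eqref{E:Sob bubble KE} in Theorem~\ref{T:Sob bubble} is obtained by inductively applying the abstract Hilbert-space fact that weak convergence $g_n \rightharpoonup \phi$ in $H$ implies $\|g_n\|_H^2 - \|g_n-\phi\|_H^2 - \|\phi\|_H^2 \to 0$.  Since $\dot H^1_a(\R^d)$ and $\dot H^1(\R^d)$ are isomorphic (Theorem~\ref{pro:equivsobolev}), a sequence converges weakly in one if and only if it converges weakly in the other, so one simply reruns the same induction with $H=\dot H^1_a(\R^d)$ instead of $H=\dot H^1(\R^d)$ and gets \eqref{E:Bubble K} for free.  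You instead keep the kinetic decoupling \eqref{E:Sob bubble KE} as given and prove the decoupling of the potential piece $\int a|f_n|^2|x|^{-2}\,dx$ by hand: expand $|f_n|^2 = |\sum_j \phi_n^j + r_n^J|^2$, exploit the scale invariance of the weighted pairing $\int g\bar h\,|x|^{-2}\,dx$ to renormalize one bubble (or the remainder) to unit scale, and kill the cross terms by pairing the other (weakly vanishing, after renormalization) factor against $|x|^{-2}\phi^j\in \dot H^{-1}(\R^d)$, using Hardy's inequality.  The two arguments use the same underlying ingredients --- weak convergence at each profile's scale plus the separation of scales \eqref{E:Sob bubble diverge} --- but the paper's abstract formulation avoids redoing the cross-term bookkeeping, whereas your version makes the mechanism explicit.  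Both are valid; neither has a gap.

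One small point worth flagging: in your first cross-term case, after normalizing $\lambda_n^k\equiv 1$ the renormalized first factor is the rescaling of $\phi^j$ by $\mu_n = \lambda_n^j/\lambda_n^k$, which tends weakly to zero whenever $\mu_n\to 0$ \emph{or} $\mu_n\to\infty$; that is why the ``reverse case'' you dismiss as symmetric is indeed handled, but it is cleaner to state this once ($\mu_n + \mu_n^{-1}\to\infty$ forces weak vanishing) rather than reducing to $\mu_n\to 0$.
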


\begin{proof}
With the exception of \eqref{E:Bubble K}, all claims follow from the fact that $\dot H^1_a(\R^d)$ is isomorphic to $\dot H^1(\R^d)$, as seen in Theorem~\ref{pro:equivsobolev}.

In the setting of Theorem~\ref{T:Sob bubble}, the decoupling statement \eqref{E:Sob bubble KE} is proved by inductive application of the fact that in any Hilbert space $H$,
\begin{equation}\label{E:HilbDecoup}
g_n \rightharpoonup \phi \qtq{weakly in $H$ implies} \| g_n \|_H^2 -  \| g_n - \phi \|_H^2 - \| \phi \|_H^2 \longrightarrow 0.
\end{equation}
Specifically, one takes $H=\dot H^1(\R^d)$ and uses \eqref{E:Sob bubble weak}  and \eqref{E:Sob bubble diverge} as input.   However, weak convergence in $\dot H^1(\R^d)$ guarantees weak convergence in $\dot H^1_a(\R^d)$ and consequently, we may apply \eqref{E:HilbDecoup} with $H=\dot H^1_a(\R^d)$.   Applying this inductively yields \eqref{E:Bubble K}. 
\end{proof}

We are now ready to complete the proof of Proposition~\ref{P:Wa}.

\begin{proof}[Proof of Proposition~\ref{P:Wa}]
Part (ii) is precisely the sharp form of classical Sobolev embedding that has been known for some time \cite{Aubin,Bliss,Talenti}.  One proof of this, adapted to serve as an introduction to  the manner in which concentration compactness methods are used in dispersive PDE, can be found in \cite[\S4.2]{KVnote}.  We will be adapting that argument when we discuss part (i) below.

Part (iii) is a trivial consequence of part (ii): As $a>0$,
$$
 \|W_0\|_{L^{\frac{2d}{d-2}}_x(\R^d)}^{-1} \|W_0\|_{\dot H^{1}(\R^d)}^{\ }  \|f\|_{L^{\frac{2d}{d-2}}_x(\R^d)}  \leq \|f\|_{\dot H^{1}(\R^d)}^{ }
	< \|f\|_{\dot H^{1}_a(\R^d)}^{ }
$$
unless $f\equiv 0$.  The fact that the constant cannot be improved follows by considering $f_n(x) = W_0(x-x_n)$ for any sequence $x_n\to\infty$.

We turn now to the proof of part (i).  By the standard rearrangement inequalities (cf. \cite[Ch. 3]{LiebLoss}), the optimal constant can determined by the consideration of radial functions alone.  Moreover, as $\int |f(x)|^2 |x|^{-2}\,dx$ is \emph{strictly} monotone under rearrangement, any optimizer must in fact be radial.

Let $f_n$ be an optimizing sequence of radial functions for the problem
$$
\text{maximize} \quad J(f):=\|f\|_{L^{\!\frac{2d}{d-2}}_x}^\frac{2d}{d-2}\div \|f\|_{\dot H^1_a}^\frac{2d}{d-2}  \qtq{subject to the constraint} \|f\|_{\dot H^1_a}=1.
$$
Applying Corollary~\ref{C:Bubble} and passing to the requisite subsequence yields
\begin{equation}\label{E:SSEPf''}
\sup_f J(f) = \lim_{n\to\infty} J(f_n)
= \sum_{j=1}^{J^*} \bigl\| \phi^j \bigr\|_{L^{\!\frac{2d}{d-2}}_x}^{\frac{2d}{d-2}}
\leq \sup_f J(f) \sum_{j=1}^{J^*}\| \phi^j \|_{\dot H_a^1(\R^d)}^{\frac{2d}{d-2}},
\end{equation}
which implies
\begin{equation}\label{E:SSEPf}
1 \leq  \sum_{j=1}^{J^*} \| \phi^j \|_{\dot H_a^1(\R^d)}^{\frac{2d}{d-2}} .
\end{equation}
Corollary~\ref{C:Bubble} also guarantees that
\begin{equation}\label{E:SSEPf'}
\sum_{j=1}^{J^*} \| \phi^j \|_{\dot H_a^1(\R^d)}^2  \leq \limsup_{J\to J^*} \biggl( \| r_n^J \|_{\dot H_a^1(\R^d)}^2 + \sum_{j=1}^J \| \phi^j \|_{\dot H_a^1(\R^d)}^2 \biggr) =1.
\end{equation}
As $\frac{2d}{d-2} > 2$, we see that the consistency of \eqref{E:SSEPf} and \eqref{E:SSEPf'} requires that $J^*=1$ and that $\phi^1$ has unit $\dot H_a^1(\R^d)$ norm.  The fact that $f_n$ is an optimizing sequence shows that $\phi^1$ must be an optimizer for $J$ and hence for the embedding \eqref{E:SS-}.

Now that we know the existence of optimizers, we turn to their characterization.  Recall that we have already seen that optimizers must be radial. Let $0\not\equiv f\in \dot H^1_a$ denote such a radial optimizer.  Replacing $f$ by $\alpha f$ for some $\alpha>0$, if necessary, we may assume that
$$
  \|f\|_{\dot H^{1}_a(\R^d)}^2 = \int_{\R^d} |f(x)|^{\frac{2d}{d-2}}\,dx.
$$

By assumption,  $f$ maximizes $\int  |f(x)|^{2d/(d-2)}\,dx$ among all functions that share its $\dot H^{1}_a(\R^d)$ norm.  Thus, it obeys the Euler--Lagrange equation $\ld f = |f|^{4/(d-2)} f $;
here we exploited the normalization of $f$ to determine the Lagrange multiplier.  Corresponding to the scale-invariance of Sobolev embedding, Noether's theorem guarantees that this Euler--Lagrange equation admits a conservation law.  Proceeding in this way, we may rewrite the Euler--Lagrange equation (in the radial variable $r=|x|$) in the form
$$
-r^2\big\{\partial_r \big[r^{\frac{d-2}{2}} f(r) \big]\big\}^2+\bigl[ \bigl(\tfrac{d-2}{2}\bigr)^2 + a \bigr] \bigl[ r^{\frac{d-2}{2}} f(r)\bigr]^2 - \tfrac{d-2}{d} \bigl[ r^{\frac{d-2}{2}} f(r)\bigr]^{\frac{2d}{d-2}} = c.
$$
As $f\in \dot H^1_x$, there is a sequence $r_n\to\infty$ so that $|(\partial_r f)(r_n)|+|\frac{1}{r_n} f(r_n)| = o(r_n^{-d/2})$.  Thus, the constant $c$ is, in fact, zero.  The resulting first-order ODE is separable when written in the variables $w(r)=r^{\frac{d-2}{2}} f(r)$ and $\rho=\log(r)$.  Carrying out the requisite integrals, we then deduce that $f(x) = \lambda^{1/2} W_a(\lambda x)$ for some $\lambda>0$.
\end{proof}

If we discard the information regarding the existence of optimizers,  is not difficult to see that the three different cases in Proposition~\ref{P:Wa} can be combined into a single equivalent statement: 

\begin{corollary}[Sharp Sobolev embedding]\label{C:SSE'}
Fix $d\geq 3$ and $a>-(\frac{d-2}2)^2$.  Then
\begin{equation*}
\frac{2 E_a(f)}{\|W_{a\wedge 0}\|_{\dot H^1_{a\wedge 0}(\R^d)}^2}
	\geq \frac{\|f\|_{\dot H^1_{a}(\R^d)}^2}{\|W_{a\wedge 0}\|_{\dot H^1_{a\wedge 0}(\R^d)}^2}
			- \frac{d-2}{d} \Biggl[ \frac{\|f\|_{\dot H^1_{a}(\R^d)}^2}{\|W_{a\wedge 0}\|_{\dot H^1_{a\wedge 0}(\R^d)}^2}\Biggr]^{\frac{d}{d-2}} .
\end{equation*}
\end{corollary}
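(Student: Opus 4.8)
The plan is to recognize that Corollary~\ref{C:SSE'} is nothing more than an algebraic rearrangement of the sharp Sobolev embedding of Proposition~\ref{P:Wa}, once the optimal constant is rewritten in a form that makes no reference to the $L^{\frac{2d}{d-2}}$-norm of the ground state. First I would combine parts (i), (ii), and (iii) of Proposition~\ref{P:Wa} into the single statement
\begin{equation*}
\|f\|_{L^{\frac{2d}{d-2}}_x(\R^d)} \le \|W_{a\wedge 0}\|_{L^{\frac{2d}{d-2}}_x(\R^d)}\, \|W_{a\wedge 0}\|_{\dot H^1_{a\wedge 0}(\R^d)}^{-1}\, \|f\|_{\dot H^1_a(\R^d)} \qquad \text{for all } f\in\dot H^1_a(\R^d),
\end{equation*}
noting that when $a>0$ one has $a\wedge 0=0$ and $\dot H^1(\R^d)=\dot H^1_0(\R^d)$, so the constant in \eqref{E:SS+} is indeed of this form, and that in \eqref{E:SS-} the right-hand side already involves $\|f\|_{\dot H^1_a}$ (not $\|f\|_{\dot H^1}$). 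Next, invoking the Pohozaev-type identity \eqref{WaPoho}, which gives $\|W_b\|_{L^{\frac{2d}{d-2}}_x}^{\frac{2d}{d-2}}=\|W_b\|_{\dot H^1_b}^2$ and hence $\|W_b\|_{L^{\frac{2d}{d-2}}_x}=\|W_b\|_{\dot H^1_b}^{\frac{d-2}{d}}$ for any admissible $b$, the optimal constant above collapses to $\|W_{a\wedge 0}\|_{\dot H^1_{a\wedge 0}(\R^d)}^{-2/d}$.

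Writing $M:=\|W_{a\wedge 0}\|_{\dot H^1_{a\wedge 0}(\R^d)}^2$, I would then raise this sharp Sobolev inequality to the power $\tfrac{2d}{d-2}$ to get $\|f\|_{L^{\frac{2d}{d-2}}_x}^{\frac{2d}{d-2}}\le M^{-2/(d-2)}\|f\|_{\dot H^1_a}^{\frac{2d}{d-2}}$, substitute this into the identity $2E_a(f)=\|f\|_{\dot H^1_a}^2-\tfrac{d-2}{d}\|f\|_{L^{\frac{2d}{d-2}}_x}^{\frac{2d}{d-2}}$, divide through by $M$, and use $-\tfrac{2}{d-2}-1=-\tfrac{d}{d-2}$ to rewrite the second term as $\tfrac{d-2}{d}\bigl(\|f\|_{\dot H^1_a}^2/M\bigr)^{d/(d-2)}$. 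This yields precisely
\begin{equation*}
\frac{2E_a(f)}{M}\ \ge\ \frac{\|f\|_{\dot H^1_a(\R^d)}^2}{M}-\frac{d-2}{d}\Bigl(\frac{\|f\|_{\dot H^1_a(\R^d)}^2}{M}\Bigr)^{\!\frac{d}{d-2}},
\end{equation*}
which is the assertion of the corollary with $M=\|W_{a\wedge 0}\|_{\dot H^1_{a\wedge 0}(\R^d)}^2$.

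There is no genuine obstacle here: all of the analytic content — the value of the sharp constant and, in the case $a>0$, the concentration-compactness argument of Corollary~\ref{C:Bubble} showing it cannot be improved — is already encapsulated in Proposition~\ref{P:Wa}, while \eqref{WaPoho} is the explicit Beta-integral evaluation recorded above. The only point requiring a moment's care is the uniform bookkeeping across the sign of $a$, namely verifying that the substitution $b=a\wedge 0$ is legitimate in both Proposition~\ref{P:Wa} and \eqref{WaPoho}, and that in the regime $a>0$ the embedding constant is measured against $\|f\|_{\dot H^1_a}$ rather than $\|f\|_{\dot H^1}$; both are immediate from the statements as written.
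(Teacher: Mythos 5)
Your proof is correct and is precisely the algebraic rearrangement that the paper leaves implicit when it writes that Corollary~\ref{C:SSE'} ``is not difficult to see'' from Proposition~\ref{P:Wa}: collapse the sharp constant via \eqref{WaPoho}, raise to the power $\tfrac{2d}{d-2}$, substitute into $2E_a(f)=\|f\|_{\dot H^1_a}^2-\tfrac{d-2}{d}\|f\|_{L^{2d/(d-2)}_x}^{2d/(d-2)}$, and divide by $M$. The bookkeeping across the sign of $a$ is handled exactly as the paper's statement of Proposition~\ref{P:Wa} prescribes, so there is nothing to add.
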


This formulation of sharp Sobolev embedding is well-suited to proving energy trapping and for demonstrating coercivity of the virial:

\begin{corollary}[Coercivity]\label{C: trap}
Fix $d\geq 3$ and $a>-(\frac{d-2}2)^2$.  Let $u:I\times\R^d \to \C$ be a solution to \eqref{E:fNLS} with initial data $u(t_0)=u_0\in \dot H^1_a(\R^d)$ for some $t_0\in I$. Assume
$E_a(u_0)\le (1-\delta_0) E_{a\wedge 0}(W_{a\wedge 0})$ for some $\delta_0>0$.  Then there exist positive constants $\delta_1$ and $c$ (depending on $d$, $a$, and $\delta_0$) such that

\noindent
{\upshape (a)} If $\|u_0\|_{\dot H^1_a(\R^d)}\le\|W_{a\wedge 0}\|_{\dot H^1_{a\wedge 0}(\R^d)}$, then for all $t\in I$
\begin{align*}
&(1)\ \|u(t)\|_{\dot H^1_a(\R^d)}\le (1-\delta_1)\|W_{a\wedge 0}\|_{\dot H^1_{a\wedge 0}(\R^d)}\\
&(2)\ \int_{\R^d}|\nabla u(t,x)|^2+ \tfrac{a}{|x|^2}|u(t,x)|^2-|u(t,x)|^{\frac{2d}{d-2}}\,dx \ge c \|u(t)\|_{\dot H^1_a}^2\\
&(3)\  c\|u(t)\|_{\dot H^1_a}^2\leq 2 E_a(u)\leq \|u(t)\|_{\dot H^1_a}^2.
\end{align*}

\noindent
{\upshape (b)} If  $\|u_0\|_{\dot H^1_a(\R^d)}\geq \|W_{a\wedge 0}\|_{\dot H^1_{a\wedge 0}(\R^d)}$, then for all $t\in I$
\begin{align*}
&(1)\ \|u(t)\|_{\dot H^1_a(\R^d)}\ge (1+\delta_1)\|W_{a\wedge 0}\|_{\dot H^1_{a\wedge 0}(\R^d)}\\
&(2)\ \int_{\R^d}|\nabla u(t,x)|^2+ \tfrac{a}{|x|^2}|u(t,x)|^2-|u(t,x)|^{\frac{2d}{d-2}}\,dx \le -c<0.
\end{align*}
\end{corollary}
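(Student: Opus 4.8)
The plan is to run the Kenig--Merle energy-trapping argument, using Corollary~\ref{C:SSE'} together with conservation of the energy $E_a$ as the only analytic inputs. First I would fix notation: write $\bar W:=W_{a\wedge 0}$, $K:=\|\bar W\|_{\dot H^1_{a\wedge 0}(\R^d)}^2$, set $y(t):=\|u(t)\|_{\dot H^1_a(\R^d)}^2/K$, and introduce $g(y):=y-\tfrac{d-2}{d}y^{\frac{d}{d-2}}$. A one-line computation from \eqref{WaPoho} gives $E_{a\wedge 0}(\bar W)=\tfrac1d K$, so the hypothesis becomes $2E_a(u_0)/K\le\tfrac{2(1-\delta_0)}{d}$. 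Since $E_a$ is conserved along the flow, Corollary~\ref{C:SSE'} applied to $u(t)$ yields $g(y(t))\le 2E_a(u(t))/K=2E_a(u_0)/K\le\tfrac{2(1-\delta_0)}{d}$ for every $t\in I$.

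Next I would study $g$ on $[0,\infty)$: since $g'(y)=1-y^{\frac2{d-2}}$, it increases on $[0,1]$, decreases on $[1,\infty)$, and has maximum $g(1)=\tfrac2d$. As $\tfrac{2(1-\delta_0)}{d}<\tfrac2d$, there are unique $y_-\in(0,1)$ and $y_+\in(1,\infty)$ with $g(y_\pm)=\tfrac{2(1-\delta_0)}{d}$, and $\{y\ge0:g(y)\le\tfrac{2(1-\delta_0)}{d}\}=[0,y_-]\cup[y_+,\infty)$. Because $u\in C_t\dot H^1_a$, the map $t\mapsto y(t)$ is continuous, and by the previous step it omits the gap $(y_-,y_+)$; hence $y(\cdot)$ stays in the component of $[0,y_-]\cup[y_+,\infty)$ containing $y(t_0)$. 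In case (a) the hypothesis gives $y(t_0)\le1$, which together with $g(y(t_0))\le\tfrac{2(1-\delta_0)}{d}$ forces $y(t_0)\le y_-$, so $y(t)\le y_-$ for all $t\in I$. In case (b) we have $y(t_0)\ge1$; the borderline value $y(t_0)=1$ is impossible since $g(1)=\tfrac2d>\tfrac{2(1-\delta_0)}{d}$, so $y(t_0)>1$, hence $y(t_0)\ge y_+$, and $y(t)\ge y_+$ for all $t\in I$. Taking $\delta_1:=\min\{1-\sqrt{y_-},\ \sqrt{y_+}-1\}>0$ gives (a)(1) and (b)(1) at once.

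For the virial estimates, set $P(u(t)):=\|u(t)\|_{\dot H^1_a}^2-\|u(t)\|_{L^{2d/(d-2)}_x}^{\frac{2d}{d-2}}$, which is the integral appearing in (a)(2) and (b)(2). For (a)(2) I would use the sharp Sobolev inequality of Proposition~\ref{P:Wa} together with \eqref{WaPoho} in the form $\|u(t)\|_{L^{2d/(d-2)}_x}^{\frac{2d}{d-2}}\le K^{-\frac2{d-2}}\|u(t)\|_{\dot H^1_a}^{\frac{2d}{d-2}}$, so that $P(u(t))\ge(1-y(t)^{\frac2{d-2}})\|u(t)\|_{\dot H^1_a}^2\ge(1-y_-^{\frac2{d-2}})\|u(t)\|_{\dot H^1_a}^2$, which is (a)(2). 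For (a)(3), the bound $2E_a(u)\le\|u(t)\|_{\dot H^1_a}^2$ is immediate from positivity of the nonlinear term, and the reverse bound follows by inserting $\|u(t)\|_{L^{2d/(d-2)}_x}^{\frac{2d}{d-2}}\le(1-c)\|u(t)\|_{\dot H^1_a}^2$ (from (a)(2)) into the definition of $E_a$, yielding $2E_a(u)\ge\bigl(\tfrac2d+\tfrac{d-2}{d}c\bigr)\|u(t)\|_{\dot H^1_a}^2$; one then shrinks $c$ so that a single constant serves throughout. For (b)(2) the Sobolev inequality points the wrong way, so instead I would solve the energy identity for the nonlinear term, $\|u(t)\|_{L^{2d/(d-2)}_x}^{\frac{2d}{d-2}}=\tfrac{d}{d-2}\bigl(\|u(t)\|_{\dot H^1_a}^2-2E_a(u)\bigr)$, giving $P(u(t))=-\tfrac2{d-2}\|u(t)\|_{\dot H^1_a}^2+\tfrac{2d}{d-2}E_a(u)\le\tfrac{2K}{d-2}(1-\delta_0-y_+)$; since $y_+>1>1-\delta_0$ this is at most $-\tfrac{2K\delta_0}{d-2}<0$, which is (b)(2).

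The argument is essentially bookkeeping; the one point deserving care is the topological step — that $y(t)$ cannot jump from $[0,y_-]$ to $[y_+,\infty)$ — which relies on the continuity of $t\mapsto\|u(t)\|_{\dot H^1_a}$ coming from $u\in C_t\dot H^1_a$, and on the strict energy deficit $\delta_0>0$, which is precisely what rules out the degenerate value $y(t_0)=1$ in case (b).
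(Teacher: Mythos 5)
Your proposal is correct and follows essentially the same route as the paper: sharp Sobolev embedding in the form of Corollary~\ref{C:SSE'}, conservation of $E_a$, a continuity (trapping) argument for the $\dot H^1_a$ norm, and the identity expressing the virial quantity as $\tfrac{2d}{d-2}E_a(u)-\tfrac{2}{d-2}\|u\|_{\dot H^1_a}^2$. The only cosmetic differences are that you make the continuity step explicit via the roots $y_\pm$ of $g(y)=\tfrac{2(1-\delta_0)}{d}$ (implicitly taking $\delta_0<1$, which is harmless since the hypothesis for larger $\delta_0$ implies it for smaller), and that in (b)(2) you exploit the energy deficit $\delta_0$ directly where the paper uses $(1+\delta_1)^2$ together with \eqref{WaPoho}.
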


\begin{proof}
Combining Corollary~\ref{C:SSE'} with the assumption $E_a(u_0)\le (1-\delta_0) E_{a\wedge 0}(W_{a\wedge 0})$ and \eqref{WaPoho}, we obtain
$$
\frac{2}{d} (1-\delta_0) \geq \frac{\|u(t)\|_{\dot H^1_{a}}^2}{\|W_{a\wedge 0}\|_{\dot H^1_{a\wedge 0}}^2}
			- \frac{d-2}{d} \Biggl[ \frac{\|u(t)\|_{\dot H^1_{a}}^2}{\|W_{a\wedge 0}\|_{\dot H^1_{a\wedge 0}}^2}\Biggr]^{\frac{d}{d-2}} .
$$
Claims (a)(1) and (b)(1) now follow by using a simple continuity argument together with the conservation of energy and the elementary inequality
$$
\tfrac{2}{d} (1-\delta_0) \geq y - \tfrac{d-2}{d} y^{\frac{d}{d-2}}  \quad\implies\quad |y-1| \geq \delta_1
$$
for some $\delta_1=\delta_1(d, \delta_0)$.

To verify items (a)(2) and (b)(2) we first write
\begin{align}\label{E:LHSofV}
\!\int_{\R^d}|\nabla u(t,x)|^2+ \tfrac{a}{|x|^2}|u(t,x)|^2-|u(t,x)|^{\frac{2d}{d-2}}\,dx = \tfrac{2d}{d-2} E_a(u) - \tfrac2{d-2} \|u\|_{\dot H^1_{a}}^2.
\end{align}
We will also need that by \eqref{WaPoho},
$$
\tfrac{2d}{d-2} E_{a\wedge0}(W_{a\wedge0}) = \tfrac2{d-2} \|W_{a\wedge0}\|_{\dot H^1_{{a\wedge0}}}^2 .
$$

In the setting of (b)(2), these two ingredients yield 
$$
\text{LHS\eqref{E:LHSofV}} \leq \tfrac{2d}{d-2} E_{a\wedge0}(W_{a\wedge0}) - \tfrac2{d-2} (1+\delta_1)^2 \|W_{a\wedge0}\|_{\dot H^1_{{a\wedge0}}}^2
	\leq - \tfrac{4\delta_1}{d-2} \|W_{a\wedge0}\|_{\dot H^1_{{a\wedge0}}}^2 < 0,
$$
which resolves this case.

In the setting of (a)(2), rearranging Corollary~\ref{C:SSE'} yields
$$
 \|u(t)\|_{\dot H^1_{a}(\R^d)}^2 - 2 E_a(u) \leq  \tfrac{d-2}{d}(1 - \delta_1)^{\frac4{d-2}}  \|u(t)\|_{\dot H^1_{a}(\R^d)}^2.
$$
In this way, we deduce that
$$
\text{LHS\eqref{E:LHSofV}} = \|u(t)\|_{\dot H^1_{a}}^2 - \tfrac{d}{d-2}\bigl[\|u(t)\|_{\dot H^1_{a}}^2 - 2 E_a(u) \bigr]
	\geq \bigl[ 1 - (1-\delta_1)^{\frac4{d-2}}\bigr] \|u(t)\|_{\dot H^1_{a}}^2.
$$

This leaves us to verify claim (a)(3).  This is not difficult.  As the nonlinearity is focusing, $2E_a(u)\leq \| u\|_{\dot H^1_a}^2$.  The other inequality follows from (a)(2) because $\text{LHS\eqref{E:LHSofV}} \leq 2E_a(u)$.  This finishes the proof of the lemma.
\end{proof}

Using the usual virial argument and Corollary~\ref{C: trap}(b), one can prove that finite-time blowup occurs for \eqref{E:fNLS} for $a>-(\frac{d-2}2)^2 + (\frac{d-2}{d+2})^2$; see Proposition~\ref{P:blowup} below.  The restriction on $a$ stems from the local well-posedness theory; it is needed to ensure that solutions can be constructed, at least locally in time.  The proof of Proposition~\ref{P:blowup} follows from a straight-forward adaptation of the arguments in, say, \cite[Section~9]{KV5}, relying on Corollary~\ref{C: trap}(b) for the requisite concavity.

\begin{proposition}[Blowup]\label{P:blowup}
Fix $d\geq 3$ and $a>-(\frac{d-2}2)^2 + (\frac{d-2}{d+2})^2$.   Let  $u_0\in \dot H_x^1(\R^d)$ be such that $E_a(u_0)< E_{a\wedge 0}(W_{a\wedge 0})$ and $\|u_0\|_{\dot H^1_a(\R^d)}\geq \|W_{a\wedge 0}\|_{\dot H^1_{a\wedge 0}(\R^d)}$.  Assume also that either $xu_0\in L^2_x(\R^d)$ or $u_0\in H_x^1(\R^d)$ is radial.  Then the corresponding solution $u$ to \eqref{E:fNLS} blows up in finite time.
\end{proposition}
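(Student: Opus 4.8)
The plan is to run the classical Glassey concavity argument: produce a non-negative virial quantity whose second time derivative is bounded above by a negative constant, and draw that negativity from Corollary~\ref{C: trap}(b). Write $K(f):=\int_{\R^d}|\nabla f(x)|^2+\tfrac{a}{|x|^2}|f(x)|^2-|f(x)|^{\frac{2d}{d-2}}\,dx$ for the left-hand side of \eqref{E:LHSofV}. Since $E_{a\wedge 0}(W_{a\wedge 0})=\tfrac1d\|W_{a\wedge 0}\|_{\dot H^1_{a\wedge 0}}^2>0$ by \eqref{WaPoho} and the hypothesis $E_a(u_0)<E_{a\wedge 0}(W_{a\wedge 0})$ is strict, I can fix $\delta_0>0$ with $E_a(u_0)\le(1-\delta_0)E_{a\wedge 0}(W_{a\wedge 0})$. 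Let $u:I\times\R^d\to\C$ be the maximal-lifespan solution to \eqref{E:fNLS} with data $u_0$; its existence on the stated range of $a$ is exactly what the local theory provides (Proposition~\ref{P:LWP} and Theorem~\ref{thm:stability} when $d=3$, together with their higher-dimensional analogues). Corollary~\ref{C: trap}(b)(2) then yields a constant $c=c(d,a,\delta_0)>0$ with $K(u(t))\le -c$ for all $t\in I$. It remains to see that this forces $|I|<\infty$.

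Suppose first $xu_0\in L^2_x(\R^d)$. A standard persistence-of-weight argument (truncate $|x|^2$, differentiate, and pass to the limit) shows $xu(t)\in L^2_x$ throughout $I$ and that $V(t):=\int_{\R^d}|x|^2|u(t,x)|^2\,dx$ is $C^2$ with $\partial_tV(t)=4\,\Im\int_{\R^d}\bar u\,x\cdot\nabla u\,dx$. Differentiating once more and using the virial identity for \eqref{E:fNLS} gives
\[
\partial_{tt}V(t)=8\,K(u(t)).
\]
Here the inverse-square term enters with the same coefficient $8$ as the kinetic term because it is scale invariant (concretely $-2\nabla(|x|^2)\cdot\nabla(a|x|^{-2})=8a|x|^{-2}$), while the Pohozaev identity $\tfrac{4d(p-1)}{p+1}=8$ with $p-1=\tfrac4{d-2}$ supplies the coefficient $8$ for the nonlinearity. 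Hence $\partial_{tt}V(t)\le -8c<0$, so the non-negative function $V$ is strictly concave on $I$; this is impossible unless $|I|<\infty$.

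Now suppose $u_0\in H^1_x(\R^d)$ is radial. Here I would replace $|x|^2$ by a radial cutoff $\psi_R$ with $\psi_R(x)=|x|^2$ for $|x|\le R$, $\psi_R$ constant for $|x|\ge 2R$, $\psi_R''\le2$ everywhere, and $|\Delta^2\psi_R|\lesssim R^{-2}$, and set $V_R(t):=\int_{\R^d}\psi_R(x)|u(t,x)|^2\,dx$. Using conservation of mass and the uniform bound $\|u(t)\|_{\dot H^1_a}\sim E(u)$ from \eqref{ke}, one gets $|\partial_tV_R(t)|\lesssim_u R$ uniformly in $t$ and $\partial_{tt}V_R(t)=8\,K(u(t))+\mathcal E_R(t)$, where $\mathcal E_R(t)$ is supported in $\{|x|\ge R\}$. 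For radial $u$ the Hessian contribution there is $4(\psi_R''-2)|\partial_ru|^2\le0$; the bi-Laplacian term and the term arising from the mismatch between $\nabla\psi_R$ and $2x$ against $\nabla(a|x|^{-2})$ are $O(R^{-2}M(u))$; and the radial Sobolev (Strauss) inequality bounds $\int_{|x|\ge R}|u|^{\frac{2d}{d-2}}\,dx\lesssim R^{-\frac{2(d-1)}{d-2}}M(u)^{\frac1{d-2}}\|\nabla u\|_{L^2_x}^{\frac1{d-2}}$. Thus $\mathcal E_R(t)\le CR^{-\kappa}$ for some $\kappa>0$, uniformly in $t\in I$; taking $R$ large gives $\partial_{tt}V_R(t)\le -4c<0$ on $I$, and again the non-negativity of $V_R$ forces $|I|<\infty$.

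The hard part will be the radial truncated-virial step: choosing $\psi_R$ so that the Hessian error on $\{|x|\ge R\}$ carries a favorable sign (which is precisely where radiality is used, to discard angular derivatives), and then estimating the remaining errors --- in particular the genuinely new one coming from the non-scale-invariance of the truncated weight interacting with $a|x|^{-2}$ --- uniformly in $t$ using only mass conservation, \eqref{ke}, and the radial Sobolev inequality. This is the adaptation of \cite[Section~9]{KV5} alluded to in the discussion above; modulo this bookkeeping the argument is routine.
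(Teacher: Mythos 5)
Your proposal is correct and follows exactly the route the paper indicates: the paper does not write out a proof, but states that the result follows from a straightforward adaptation of \cite[Section~9]{KV5}, with Corollary~\ref{C: trap}(b) supplying the concavity, and that is precisely what you carry out. The virial computation (coefficient $8$ on all three terms because both $|x|^{-2}$ and the energy-critical nonlinearity scale the same way as the Laplacian), the passage from the strict inequality $E_a(u_0)<E_{a\wedge 0}(W_{a\wedge 0})$ to a quantitative gap $\delta_0$, and the radial truncated-virial step with a concave cutoff plus radial Sobolev control of the tail are all the intended ingredients. One small slip: in the tail estimate for the nonlinearity the exponents should read $\int_{|x|\ge R}|u|^{\frac{2d}{d-2}}\,dx\lesssim R^{-\frac{2(d-1)}{d-2}}M(u)^{1+\frac1{d-2}}\|\nabla u\|_{L^2_x}^{\frac2{d-2}}$ rather than $M(u)^{\frac1{d-2}}\|\nabla u\|^{\frac1{d-2}}$, but since mass is conserved and $\|\nabla u(t)\|_{L^2}$ is uniformly bounded this has no bearing on the argument.
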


\end{document}